\documentclass[11pt]{article}

\usepackage[vmargin=3.0cm]{geometry}
\usepackage{import}
\usepackage[utf8]{inputenc}
\usepackage[T1]{fontenc}
\usepackage{amsmath}
\usepackage{amsthm}
\usepackage{amsfonts}
\usepackage{amssymb}
\usepackage[all]{xy}
\usepackage{enumitem}
\usepackage{color}
\usepackage{setspace}
\usepackage[hidelinks]{hyperref} 
\usepackage[noabbrev]{cleveref}
\usepackage{graphicx}

\usepackage{ifpdf}
    \ifpdf

\setcounter{secnumdepth}{3}
\setlength\parindent{0cm}
\setlist{noitemsep,topsep=0pt,parsep=0pt,partopsep=0pt}

\setlength{\belowdisplayskip}{4pt} 
\setlength{\abovedisplayskip}{4pt} 

\newtheorem{thm}{Theorem}[section]
\newtheorem{lem}[thm]{Lemma}
\newtheorem{prop}[thm]{Proposition}
\newtheorem{cor}[thm]{Corollary}

\newtheorem{MainThm}{Theorem}
\newtheorem{MainCor}[MainThm]{Corollary}
\newtheorem*{thm*}{Theorem}

\theoremstyle{remark}
    \newtheorem{rmk}[thm]{Remark}

    \newtheorem*{rmk*}{Remark}
\theoremstyle{definition}
    
    \newtheorem{fact}[thm]{Fact}

\newcommand{\ZZ}{\mathbb{Z}}

\newcommand{\RR}{\mathbb{R}}
\newcommand{\CC}{\mathbb{C}}

\newcommand{\HH}{\mathbb{H}}
\newcommand{\NN}{\mathbb{N}}
\newcommand{\TT}{\mathbb{T}}

\newcommand{\A}{\mathcal{A}}

\newcommand{\C}{\mathcal{C}}

\newcommand{\CH}{\mathcal{C}H}

\renewcommand{\L}{\mathcal{L}}

\newcommand{\M}{\mathcal{M}}

\newcommand{\N}{\mathcal{N}}

\newcommand{\xx}{\mathbf x}

\newcommand{\ra}{\rightarrow}
\newcommand{\hra}{\hookrightarrow}

\newcommand{\lra}{\longrightarrow}

\newcommand{\mt}{\mapsto}

\renewcommand{\a}{\alpha}
\renewcommand{\b}{\beta}
\renewcommand{\d}{\delta} 
\newcommand{\e}{\varepsilon} 
\renewcommand{\i}{\iota}

\newcommand{\g}{\gamma}
\newcommand{\Ga}{\Gamma}
\renewcommand{\l}{\lambda}
\newcommand{\La}{\Lambda}

\newcommand{\Si}{\Sigma}
\renewcommand{\t}{\tau}
\renewcommand{\th}{\theta}
\newcommand{\w}{\omega}
\newcommand{\we}{\wedge}
\newcommand{\vphi}{\varphi}

\newcommand{\eu}{\text{eu}}

\newcommand{\im}{\text{Im }}
\newcommand{\Id}{\text{Id}}

\newcommand{\st}{\text{st}}

\newcommand{\sing}{\text{sing}}
\newcommand{\colim}{\text{colim }}

\newcommand{\dd}{\partial}
\newcommand{\ity}{\infty} 

\newcommand{\rest}{\restriction}

\renewcommand{\over}{\overset}
\newcommand{\under}[2]{\underset{#2}{#1}}
\newcommand{\sms}{\setminus}
\newcommand{\mr}{\mathring}

\newcommand{\<}{\left<}
\renewcommand{\>}{\right>}

\title{Sutured contact homology, conormal stops and hyperbolic knots}
\author{Côme Dattin}

\begin{document}

\maketitle

\begin{center}
\begin{abstract}
\noindent We apply the conormal construction to a hyperbolic knot $K \subset S^3$, and study the sutured contact manifold $(V, \xi)$ obtained by taking the complement of a standard neighbourhood of the unit conormal $\La_K \subset (ST^*S^3, \xi_\st)$. We show that the sutured Legendrian contact homology of a unit fiber $\La_0$, with its product structure, is a complete invariant of the knot (up to mirror). This can also be seen as the computation of the homology of the fiber in $ST^* S^3$, stopped at $\La_K$. Our main tool is, for any submanifold $N \subset M$, an explicit relationship between the complement of a unit conormal $\La_N$, and the unit bundle of $M \sms N$.
\end{abstract}
\end{center}

Given a {  contact manifold} $(V, \xi)$, a powerful invariant is its contact homology $CC(V, \xi)$, whose technical difficulties have been solved in \cite{Par15_CH_&_virtual}. Adding to the picture a closed {  Legendrian} submanifold $\La \subset (V, \xi)$, the corresponding invariant is its Chekanov-Eliashberg algebra, introduced by \cite{Che02_Dga_leg_links} for knots in $\RR^3$, and extended to the case of a closed Legendrian in a {  contactisation} $\RR \times P$ by \cite{EES02_LCH_R2n+1} \cite{EES05_LCH_PxR}. While one expect the construction to carry over for Legendrians in a compact contact manifold, the details have not been laid out yet.

Although those two settings (compact contact manifolds and contactisations) are the most commonly used in the literature, an in-between situation extending both points of view has been considered in \cite{CGHH10_Sutures} : the {  sutured} contact manifolds, which are closely related to the {  convex hypersurfaces} defined by Giroux \cite{Gir91_Convexity} \cite{Gir02_Dim3_vers+}.

In this paper we study a specific example, the complement of the unit conormal $\La_K$ of a knot $K \subset M$, in which the sutured setting arises naturally. Using the machinery of \cite{CGHH10_Sutures}, as well as the flexibility of the sutured setting, we compute the Legendrian homology of a unit fiber in $ST^*M \sms \La_K$ for an hyperbolic knot $K$. By adding the product structure, we obtain a complete invariant of hyperbolic knots, recovering the result of \cite{ENS16_Complete_knot_invrt} in a more restricted case.
As a byproduct of our construction, we also get an explicit relationship between the contact manifolds $ST^*M \sms \La_K$ and $ST^*(M\sms K)$, and between their invariants.

\subsection*{Sutured setting} 
Roughly speaking, a {\it sutured} contact manifold is a non-compact contact manifold $(V, \l)$ which, away from a compact, is the contactisation of a Liouville domain $(W, \b)$ :
\[ (V, \l) \simeq (\RR \times W, dt + \b) \quad \text{  away from a compact} \]
where we use the same notation for a Liouville domain $W$, with boundary, and its completion $W \cup (\RR^+ \times \dd W)$ which is non-compact.\\
Let us emphasise that, contrary to the usual compact case, we need to specify a contact form, and not only a contact structure. The results from \cite{CGHH10_Sutures} implies that the contact homology of such a manifold is a well-defined invariant. Moreover, they prove that given a sutured contact manifold, one can construct a (compact) contact manifold with convex boundary, and vice-versa. \\

Here are some examples of sutured contact manifolds that naturally arise using standard constructions : 
\begin{itemize}
    \item {\it  From a contactisation} : A contactisation $\RR \times W$ is obviously a sutured manifold. One can then apply any standard operation which only modify the interior : Legendrian or transverse surgery, Lutz twist... \\
    
    \item {\it Complement of a Legendrian} : As observed in \cite{CGHH10_Sutures}, given a (closed) Legendrian submanifold $\La$ in a (compact) contact manifold $(V, \xi)$, the complement of a standard neighbourhood of $\La$ has convex boundary. Applying the convex-sutured procedure, one obtain a sutured contact manifold $V_\La$. Moreover its sutured contact homology is a Legendrian invariant of $(\La, V, \xi)$. \\

    \item {\it Unit bundle of manifolds with boundary} : Starting with a closed smooth manifold $M$, its unit cotangent bundle\footnote{We call that way the spherization of the cotangent bundle. If we choose a metric $g$, there a contactomorphism between $ST^*M$ and $U_g M$, the unit bundle associated to that metric.} $ST^*M$ is a contact manifold. Moreover, if $K \subset M$ is a (smooth) submanifold, its unit conormal $\La_K  = ST^*_K M \subset ST^* M$ is a Legendrian submanifold. 

When $M$ is a manifold with boundary, it's unit bundle has convex boundary. More precisely, $ST^* M$ is the contactisation of $T^*(\dd M)$ away from a compact.
 Moreover, if $K$ is a submanifold with boundary $\dd K \subset \dd M$ (where the intersection $K \cap \dd M$ is transverse), then $\La_K$ is a sutured Legendrian, see \cite{Dat20_PhD_Homologies_leg_sut_applications} \cite{Dat22_Sutured_Legendrian_homology_2-braids} for more details. 

\end{itemize}

\subsection*{Main results and strategy} 
\paragraph{Complement of a unit conormal.}  
Consider a $k$-dimensional submanifold $K \subset M^n$, and $\La_0 \subset UM$ a Legendrian disjoint from $(UM)_{\rest K}$, the full unit bundle over $K$. The invariants of the complement of the conormal $\La_K$, and of the unit bundle of the complement of $K$, are related by :

\begin{MainThm} \label{MainThm.UM-UK}
There is a quasi-isomorphism 
\[LH \big(\La_0, UM \sms \La_K) \big) \simeq LH\big(\La_0, U (M \sms K )\big),\]
compatible with the product structure.
\end{MainThm}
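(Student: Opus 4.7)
The plan is to construct an explicit sutured contactomorphism between (a completion of) $UM \sms \mathcal{N}(\La_K)$, the complement of a standard Weinstein neighbourhood of $\La_K$, and (a completion of) $U(M \sms \nu K)$, the unit bundle of the complement of a tubular neighbourhood of $K$. Since $\La_0$ is disjoint from $(UM)|_K$ by hypothesis, it sits identically in both pictures, so the invariance of sutured Legendrian contact homology under sutured contactomorphisms from \cite{CGHH10_Sutures} then yields the desired quasi-isomorphism. Compatibility with the product structure is automatic, since that structure is intrinsic to the Chekanov--Eliashberg DGA.

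To build the map, I would choose a tubular neighbourhood $\nu K \cong K \times D^{n-k}$ and use polar coordinates $(x, r, \theta) \in K \times (0, \e) \times S^{n-k-1}$ on $\nu K \sms K$. The restriction of $U(M \sms K)$ to this subset splits as a product, and its sutured end (as $r \to 0^+$) has a natural contactisation description. On the other side, $\La_K$ is itself an $S^{n-k-1}$-bundle over $K$, and a standard Weinstein neighbourhood is $J^1(\La_K) \cong T^*\La_K \times \RR$, again a contactisation. The two pictures share the base variables $(x, \theta)$, so a candidate contactomorphism is obtained by matching these, identifying the contactisation coordinate on the $J^1$-side with $-\log r$ so as to align the non-compact ends, and matching the remaining symplectic fibre coordinates. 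Extending this map by the identity outside $\nu K$ produces the global sutured contactomorphism, provided the two contact forms agree up to a contact isotopy supported in $\nu K$.

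The main obstacle is this last coordinate check. The contactisation direction on $J^1(\La_K)$ is unbounded, whereas $r$ lies in a bounded interval, so the identification only makes sense after passing to sutured completions, and requires careful control of how the contact form depends on the metric chosen on $M$. A clean way to organise the computation is probably to deform the ambient metric on $M$ to a ``cusp-like'' one near $K$: in such a metric $K$ is pushed to infinity inside $M \sms K$, and its unit conormal $\La_K$ is correspondingly pushed to the sutured infinity of $UM$, making the two completions contactomorphic on the nose.
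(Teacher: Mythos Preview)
There is a genuine gap: the two sutured contact manifolds are \emph{not} contactomorphic, so no such map exists. Concretely, over a point $x\in K$ the manifold $UM\sms\N(\La_K)$ still contains the fibre $S^{n-1}$ minus a neighbourhood of the equatorial $S^{n-k-1}$, whereas $U(M\sms\nu K)$ has nothing over $x$ at all. In the knot case ($n=3$, $k=1$) this extra piece is $S^1\times(\text{two discs})$; the paper makes this explicit in coordinates, writing $V_1=UM\sms\N(\La_K)$ as $V_0=U(M\sms K)$ with two circular handles $S^1\times D^2\times D^2$ glued on. Your cusp-metric idea cannot repair this: changing the metric changes the contact form but not the underlying manifold, and no amount of stretching moves $\La_K$ ``to infinity'' inside $UM$. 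Indeed, \cref{MainThm.CC_M-K} of the paper shows that the \emph{closed-string} contact homologies differ by a polynomial tensor factor, so even the completed contact manifolds are provably non-contactomorphic.

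The paper's argument is therefore of a different nature. One shows that $V_1$ is obtained from $V_0$ by attaching a circular handle $H\simeq ST^*K\times D^{2(n-k)}$ along a piece of the horizontal boundary $R_-$, and then invokes the gluing theorem of \cite{CGHH10_Sutures} (here \cref{thm.detach_handle}): since the Legendrian $\La_0$ and all its Reeb chords lie in $V_0$ and never cross the gluing hypersurface, the Legendrian complex is literally unchanged. The genuine work is that the gluing region for the handle naturally sits \emph{on} the dividing set of $\dd V_0$, so one must first perturb the contact form (multiplying by $1+\e\,\eta\cdot z$) to displace the suture and bring the situation into the scope of the gluing theorem; this is carried out in \cref{sssec.Modify_contact_form} and \cref{ssec.Higher_dim_handle}. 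The product structure survives because the cobordism/continuation maps used are chain maps of DGAs, not because of a global contactomorphism.
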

Here we either assume that the contact form is hypertight (ie there is no contractible Reeb orbit), or that we can use an augmentation. In the general case, the Legendrian homology is a module over the contact homology of the manifold. Also note that $M \sms N$ (resp. $UM \sms \La_N$) should be understood as complement of a neighbourhood of $N$ (resp. $\La_N$), in particular they are manifolds with convex boundary. \\

The strategy is as follows. Topologically, the complement of the conormal $V_1 = UM \sms \La_K$ can be obtained from $V_0 = U(M \sms K)$ by gluing a {circular handle} $ST^*K \times S^{n-k-1} \times D^{2(n-k)}$ on the boundary. Moreover, the gluing locus is a neighbourhood of a submanifold of the dividing set of $\dd V_0$, where we think of $V_0$ and $V_1$ as manifolds with convex boundaries. To simplify the situation, we explicitly perturb the contact form to displace the suture, so the handle end up being glued away from the dividing set. We can interpret this operation as gluing two sutured contact manifolds, whose effects on contact homology and Legendrian homologies has been described in \cite{CGHH10_Sutures}. \\

When the submanifold is an hypersurface $\Si \subset M$, we obtain a slightly better result. The conormal $\La_\Si$ has two connected component, and we can choose one of them which we denote $\La_\Si^+$ (by coorienting the submanifold). If $\Si$  splits the manifold into $M = M_1 \cup_\Si M_2$, we have :

\begin{MainThm} \label{MainThm.LCH_Hypersurf_Stop}  If $\La_0$ is a Legendrian contained in $UM_1$, we get 
\[LC(\La_0, UM\sms \La_\Si^+) \simeq LC(\La_0, UM_1).\]
\end{MainThm}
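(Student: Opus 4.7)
The plan is to adapt the strategy used for Theorem \ref{MainThm.UM-UK} to the simpler codimension-one geometry, where the conormal $\La_\Si$ has two components $\La_\Si^+$ and $\La_\Si^-$ and we remove only one of them. When $\Si$ is cooriented and separating, $M \sms \Si \simeq M_1 \sqcup M_2$ and hence $U(M \sms \Si) = UM_1 \sqcup UM_2$. In the general argument of Theorem \ref{MainThm.UM-UK}, passing from $UM \sms \La_\Si$ to $U(M \sms \Si)$ required attaching a ``circular handle'' with an $S^{n-k-1}$-factor, which in the hypersurface case ($n-k-1=0$) degenerates to \emph{two} copies, one for each component of $\La_\Si$. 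Removing only $\La_\Si^+$ therefore corresponds to keeping just one of these handles: $UM \sms \La_\Si^+$ is obtained from the disjoint union $UM_1 \sqcup UM_2$ by a single circular handle attachment along a neighbourhood of a copy of $ST^*\Si$.

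Following the same sutured-gluing philosophy, I first write $UM \sms \La_\Si^+$ as a gluing of the sutured contact manifolds $UM_1$ and $UM_2$. After a controlled perturbation of the standard contact form---analogous to the one used in the proof of Theorem \ref{MainThm.UM-UK}---the dividing set on the convex boundary of a standard neighbourhood of $\La_\Si^+$ is displaced so that the handle is attached entirely on one side of the divides. The attaching region is a neighbourhood of $ST^*\Si$ crossed with a disk factor, which is disjoint from $\La_0 \subset UM_1$.

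Next I apply the sutured gluing formula of \cite{CGHH10_Sutures}, which expresses $LC(\La_0, UM \sms \La_\Si^+)$ in terms of the pieces. Because $\La_0$ is contained in $UM_1$ and the handle is attached far from it, with the chosen perturbation the Reeb chords of $\La_0$ are exactly those already present in $UM_1$: no chord of $\La_0$ can reach $UM_2$ through the handle without crossing the displaced suture, which is forbidden on a sutured contact manifold. The same confinement argument on the SFT side shows that pseudo-holomorphic discs contributing to the differential also stay in $UM_1$. Hence generators and differentials match on the nose, yielding a chain-level identification $LC(\La_0, UM \sms \La_\Si^+) \simeq LC(\La_0, UM_1)$, automatically compatible with the product structure because the perturbation is supported away from $\La_0$.

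The main obstacle is, as in Theorem \ref{MainThm.UM-UK}, the construction of the perturbation: one needs to simultaneously displace the dividing set and ensure that no new short Reeb chord of $\La_0$ is created in or near the handle region. The trick is to use a perturbation supported in a collar near $\La_\Si^+$ that affects only the Reeb dynamics entering $UM_2$, and to use action bounds (together with the hypertightness or augmentation hypothesis that already underlies the statement) to verify that Reeb chords of $\La_0$ remain confined to $UM_1$. Once this geometric step is in place, the chain-level isomorphism is immediate and the fact that the statement holds at the level of $LC$ rather than only $LH$ comes for free, since one identifies generators and differentials directly rather than through a continuation map.
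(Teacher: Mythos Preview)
Your proposal follows essentially the same route as the paper: perturb the contact form near $\Sigma$ to displace the dividing set off the gluing locus, recognise $UM \sms \La_\Si^+$ as built from $U(M\sms\Si) = UM_1 \sqcup UM_2$ by a handle attachment, and invoke the detaching-handle result (\cref{thm.detach_handle}). One sharpening worth making: the handle here is $H \simeq I \times D^*\Si$ (not a circular handle), and the decomposition actually fed into \cref{thm.detach_handle} is $V_1 = UM_1 \cup (UM_2 \cup H)$, because only the suture of $\dd(UM_1)$ can be displaced from the gluing locus while that of $\dd(UM_2)$ remains trapped inside it --- this asymmetry is precisely what forces the hypothesis $\La_0 \subset UM_1$, and once it is in place your action-bound and ``crossing the suture'' heuristics become unnecessary.
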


\begin{rmk}[Singular submanifold]
\Cref{MainThm.UM-UK} also applies to a singular submanifold $N_\sing \subset M$, by defining it's conormal as the (cooriented) conormal of the boundary of a neighbourhood $\N(N_\sing)$. It is then a direct consequence of the previous corollary.
\end{rmk}

In the closed string setup, where we work with algebras generated by closed Reeb orbits, we also obtain the following result :
\begin{MainThm}\label{MainThm.CC_M-K}
 Consider a knot $K \subset M$, where $M$ is $n$-dimensional. Then 
  \[CC (UM \sms \La_K) \simeq CC\big(U(M\sms K)\big) \otimes \ZZ[(c_k)_{k \in \ZZ\sms 0}], \]
where the generators $c_k$ are all of degree $n+2$.
\end{MainThm}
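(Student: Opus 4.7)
The plan is to adapt the geometric strategy used for \autoref{MainThm.UM-UK} to the closed-string setting, tracking closed Reeb orbits rather than chords on a fixed Legendrian. Recall that $V_1 = UM\sms\La_K$ is obtained from $V_0 = U(M\sms K)$ by gluing a circular handle $H \cong ST^*K \times S^{n-2} \times D^{2(n-1)}$; for a knot $K \cong S^1$ this handle deformation retracts onto $S^1 \sqcup S^1$ times a sphere factor. After the same perturbation of the contact form used in the previous theorems to displace the suture, one can view $V_1$ as the sutured gluing of $V_0$ and $H$ along a region disjoint from the dividing set, and apply the gluing machinery of \cite{CGHH10_Sutures}.

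The next step is to identify the new closed Reeb orbits created by the handle. The orbits inside $V_0$ persist in $V_1$, so it suffices to analyse the Reeb dynamics inside $H$. On the standard model (essentially the unit bundle of a disc normal to $K$), the Reeb flow has a Morse-Bott family of closed orbits indexed by $\ZZ\sms 0$, namely the meridian of $K$ traversed $|k|$ times with sign determined by co-orientation. After a Morse-Bott perturbation each of these yields a pair of good orbits, and $c_k$ is identified with the top generator of the corresponding pair. The grading is then computed by a direct index calculation for the linearised flow on $T^*S^1 \times T^*\RR^{n-1}$, using the trivialisation of $TM$ along a tubular neighbourhood of $K$; this should give $\CZ = n+2$ as claimed.

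Finally, to obtain the tensor-product decomposition, the plan is to run a neck-stretching argument along the displaced boundary separating $V_0$ from the handle. Topological simplicity of $H$ combined with action considerations (one may take the action of the $c_k$ arbitrarily small after perturbation) should rule out finite-energy holomorphic curves with mixed asymptotics: any such curve would split under stretching into pieces, at least one of which would violate positivity of energy. This shows the $c_k$ are cycles, commute with the rest of the algebra in $CC$, and do not appear in the differential of any orbit of $V_0$, yielding the claimed factorisation.

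The main obstacle will be the last step: ensuring that the almost-complex structure on the completed handle really excludes all spurious mixed curves between orbits of $V_0$ and the meridional family, and verifying that the Morse-Bott perturbation does not introduce internal differentials among the $c_k$. In practice one must show both that $c_k$ does not cobound any combination of lower-action meridional orbits, and that no curve in $V_0$ has a $c_k$ among its negative punctures; the first is a local model computation, while the second reduces, via SFT compactness, to the fact that the $c_k$ are confined to an arbitrarily small action window inside $H$.
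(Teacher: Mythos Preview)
Your overall geometric picture is right --- $V_1$ is obtained from $V_0$ by attaching circular handles after displacing the suture, and the new closed orbits live on the cores of those handles --- but the paper's execution is simpler than what you propose in three places, and one of your steps has a genuine gap.

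First, there is no Morse--Bott family to perturb. The paper equips the model handle $S^1_s \times D^2_x \times D^2_y$ with the explicit contact form $(1+\e\, x\cdot y)\,ds - y\,dx$, whose Reeb field is $\dd_s + \e(x\,\dd_x - y\,\dd_y)$ up to scale. This already has a \emph{single} transversely hyperbolic closed orbit along the core $\{x=y=0\}$, of Conley--Zehnder index $0$; the generators $c_k$ are its iterates, and no Morse--Bott perturbation or selection of a ``top generator'' is required. (Incidentally, the core projects to a curve parallel to $K$, not to a meridian.) Because a knot has two cooriented conormal components, \emph{two} such handles are attached, which is why the index set is $\ZZ\sms 0$ rather than $\NN\sms 0$.

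Second, you do not need a separate neck-stretching argument for the tensor decomposition: it is exactly the content of the sutured gluing result of \cite{CGHH10_Sutures}, stated in the paper as \cref{thm.detach_handle}. Once the contact form is arranged so that no Reeb orbit crosses the interior hypersurface $\Si_0$ (which is precisely what the perturbation $\l_\e$ achieves), that theorem gives $CC(V_1)\simeq CC(V_0)\otimes CC(H)$ directly. Your concern about curves with ``mixed asymptotics'' between $V_0$-orbits and the $c_k$ is therefore already absorbed into the cited gluing theorem.

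Third --- and this is the actual gap --- your proposed mechanism for $\dd c_k = 0$ does not work as stated. You cannot make the action of the $c_k$ uniformly small: the action of $c_k$ is $k$ times that of $c_1$, so for large $k$ there is plenty of room, on action grounds alone, for $c_k$ to bound products $c_{a_1}\cdots c_{a_l}$. The paper's argument is instead: the $H_1$-injectivity of the gluing torus forces any negative asymptotics of a curve with positive end $c_k$ to be a word $c_{a_1}\cdots c_{a_l}$ with $\sum a_i = k$; such a word has \emph{exactly} the same action as $c_k$, so the curve has zero $d\l$-energy and must be a branched cover of the trivial cylinder over the simple core orbit; and by \cite{Fab07_Obstruction_bundles_Action_filtration} these covers do not contribute to the differential. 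This replaces your open-ended ``local model computation'' with a one-line citation.
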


\paragraph{Unit conormal of hyperbolic knots.}
Taking $M=S^3$, $K$ an hyperbolic knot and $\La =F^1_x$ a unit fiber in $US^3$, with $x \notin K$, we obtain by an explicit computation the following 
\begin{MainThm} \label{MainThm.complete_invariant}
The linearized Legendrian contact homology $LH(F^1_x, US^3 \sms U_K S^3)$, with its product structure, is a complete invariant of hyperbolic knots (up to mirror).
\end{MainThm}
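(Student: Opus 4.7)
The plan is to combine \Cref{MainThm.UM-UK} with an explicit computation of the fibre's Legendrian homology in the unit bundle of the hyperbolic knot complement, and then invoke classical rigidity results for hyperbolic $3$-manifolds. Concretely, since $x \notin K$ the fibre $F^1_x$ is disjoint from the conormal $\La_K$, so \Cref{MainThm.UM-UK} applied with $M=S^3$, $N=K$, $\La_0=F^1_x$ yields a product-preserving quasi-isomorphism
\[ LH\bigl(F^1_x,\, US^3 \sms \La_K\bigr) \simeq LH\bigl(F^1_x,\, U(S^3 \sms K)\bigr), \]
so the task reduces to showing the right-hand side, with its product, is a complete invariant of hyperbolic knots up to mirror.

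For that computation I would use Thurston's hyperbolisation to equip $S^3 \sms K$ with a complete finite-volume hyperbolic metric $g$ and take the associated geodesic flow as the Reeb flow on $U(S^3\sms K)$. Closed geodesics in a hyperbolic $3$-manifold are never contractible, so the contact form is hypertight and the sutured LCH of \Cref{MainThm.UM-UK} is defined without an auxiliary augmentation. Reeb chords of $F^1_x$ are geodesic loops based at $x$, and strict negative curvature gives a canonical bijection between them and $\pi_1(S^3 \sms K, x) \sms \{e\}$, each chord being transversely hyperbolic and hence non-degenerate. Using the asphericity of knot complements (Papakyriakopoulos), an action/index analysis should force the DGA differential and any higher product corrections to vanish, giving
\[ LH\bigl(F^1_x,\, U(S^3\sms K)\bigr) \simeq \ZZ\bigl[\pi_1(S^3 \sms K, x)\bigr] \]
concentrated in a single degree, with canonical Reeb-chord basis and with product equal to concatenation of loops---that is, to the group operation of $\pi_1$.

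From this point the knot is recovered by a sequence of classical results. Hyperbolic knot groups are torsion-free, residually finite and centreless, so the generating set of Reeb chords (closed under the product) reads off $\pi_1(S^3 \sms K)$ as an abstract group. Mostow rigidity then reconstructs the hyperbolic manifold $S^3 \sms K$ from its fundamental group, and the Gordon-Luecke theorem recovers $K \subset S^3$ from its complement up to mirror image.

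The main obstacle is the hyperbolic computation in the second paragraph. The cusps of $S^3 \sms K$ carry arbitrarily short geodesics along the Margulis tubes, which would a priori contribute extra Reeb orbits or chords to the differential; one must first truncate the cusps using the sutured machinery of \cite{CGHH10_Sutures} while preserving the homology, so that the manifold becomes effectively compact with convex boundary. Once the truncation is set up and the relevant $J$-holomorphic moduli are shown to be confined to the compact region by action/monotonicity estimates, the standard negative-curvature collapse argument (\emph{à la} Ekholm, Abouzaid, Cieliebak-Latschev) should identify the DGA with the group ring. The other two steps are then essentially formal: the first paragraph is a direct application of \Cref{MainThm.UM-UK}, and the third is an assembly of Mostow and Gordon-Luecke.
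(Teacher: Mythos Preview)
Your plan follows the paper's approach almost exactly: reduce to $U(S^3\sms K)$ via \Cref{MainThm.UM-UK}, compute the fibre's Legendrian homology there using the hyperbolic geodesic flow, and finish with Mostow plus Gordon--Luecke. A few points where your sketch and the paper diverge or where your argument needs tightening:

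\textbf{The cusp obstacle.} You correctly flag that the contact form coming from the complete hyperbolic metric is \emph{not} adapted to the sutured boundary (the Reeb flow near the cusp goes the wrong way), and that this is the main issue. The paper does not resolve this by a simple truncation-and-monotonicity argument as you suggest. Instead it builds a one-parameter family $\l_T$ which agrees with the hyperbolic form on an exhausting sequence of compacts but is independent of $T$ (and genuinely adapted) near the suture; then it shows, via an explicit Moser-trick estimate on the conformal factor, that the direct-limit lemma of \cite{CGH10_ECH_&_OBD} applies, so one may compute with the hyperbolic form after all. Your ``truncate and confine the curves'' sketch is not obviously sufficient: arbitrarily long chords will enter arbitrarily deep into the cusp, so no fixed truncation sees all generators.

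\textbf{The product.} Action and homotopy alone only tell you that $\mu(c_\g,c_{\g'})$ is a multiple of $c_{\g'\g}$; they do not give the count. The paper gets the count $1$ by passing (via another metric interpolation) to $J^1(S^2)$ and using Ekholm's Morse flow trees to exhibit the unique rigid tree. Your allusion to loop-space isomorphisms \`a la Abouzaid or Cieliebak--Latschev would also work in principle, but those results are stated for closed $M$ or Liouville fillings, not for sutured contact manifolds with cusp boundary, so you would again be facing exactly the adaptation problem above.

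\textbf{Recovering the group from the ring.} The invariant is the abstract ring, with no distinguished basis, so ``the generating set of Reeb chords reads off $\pi_1$'' is not available: an isomorphism of rings need not send chords to chords. Torsion-freeness, residual finiteness and trivial centre do not by themselves solve the isomorphism problem for group rings. The paper instead uses that knot groups are left orderable, whence the only units in $\ZZ_2[G_K]$ are monomials (\cite{LaRh68_Ring_group_left_order}), so a ring isomorphism restricts to a group isomorphism. You should replace your last step by this argument.
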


By the previous result, we can compute the Legendrian homology in $U(S^3 \sms K)$, however we can't simply use the contact form induced hyberbolic metric, since it is not adapted to the sutured manifold (see \cref{sec.U(M-K)} for a more precise discussion). Instead we construct a family of contact forms which are adapted to the sutured manifold, and which "converge" toward the previous contact form. We then show that we can take the colimit, by an explicit use of Moser's trick. This implies that, in practice, we can actually use the contact form induced by the hyperbolic metric. Using the Morse flow trees from \cite{Ek05_Morse_flow_trees}, we compute the product structure, and show that the Legendrian homology is (roughly) the group ring of $\pi_1(S^3 \sms K)$.

Note that the previous colimit argument can also be applied to $UM \sms \La_K$, so it proves \cref{MainThm.complete_invariant} directly. However we believe that \cref{MainThm.UM-UK} holds its own interest. \\

\begin{rmk}
Using  our strategy for a general theorem seems technically more difficult. While it is known by \cite{Thu79_3-mfds} that a knot is either hyperbolic, a torus knot or a satellite knot (moreover, the complement of a torus knot is a Seifert fibered space), the computation of the homology of a fiber in $U(S^3 \sms K)$ is only simple in the hyperbolic case.  \\
On the other hand, one could use instead the cohomology of the space of based loops in $M\sms K$ \cite{AbSc04_FH_cotangent} \cite{Abo09_Fuk_loops}, or the string homology perspective \cite{CELN16_Knot_&_cord_alg}. However, one also need to recover the peripheral subgroup, by developing a more refined version of those theories for manifolds with boundary. 
 \end{rmk}

\paragraph{Conormal stops.}
Finally $UM$ is naturally filled by $DM$, and the previous theorem can be interpreted as a computation of the Floer homology of a fiber $F_x$ in $DM$, stopped at the conormal $U_K M$ (here $M$ doesn't have to be $S^3$, but we still assume that $K$ is hyperbolic and $x \notin K$).
\begin{MainCor} \label{MainCor.stopped_Floer}
The wrapped Floer homology of a fiber $F \subset DM$, stopped at $\La_K$, with its product structure, is isomorphic (as a unital ring) to $\ZZ[\bar \pi_1 (M \sms K)]$, 
where $\bar \pi_1$ denotes the group ring with the opposite law.
\end{MainCor}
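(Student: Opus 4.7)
The plan is to identify the stopped wrapped Floer homology of $F$ with the linearized Legendrian contact homology already treated in \cref{MainThm.complete_invariant} (in the form valid for a general ambient manifold $M$, as noted in the remark following that theorem), and then to match the ring structures.

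First, I would view the fiber $F \subset DM$ as an exact Lagrangian filling of its Legendrian boundary $F^1_x \subset UM$, which is disjoint from a neighbourhood of $\La_K$ since $x \notin K$. The partially wrapped Floer complex $CW^*(F,F;\La_K)$ then has, at the cylindrical end, generators given by Reeb chords of $F^1_x$ that avoid the stop, i.e.~by Reeb chords in the sutured contact manifold $UM \sms \La_K$. The filling $F$ supplies a tautological augmentation of the Chekanov--Eliashberg algebra of $F^1_x$ in $UM \sms \La_K$, and by the standard filling argument (an SFT cobordism counting holomorphic strips in the symplectisation capped off by discs on $F$) one obtains a quasi-isomorphism
\[ HW^*(F,F;\La_K) \;\simeq\; LH\bigl(F^1_x,\, UM \sms \La_K\bigr), \]
intertwining the Pontryagin product on the Floer side with the concatenation product on the Legendrian side.

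Next, I would invoke \cref{MainThm.complete_invariant} together with the remark that its proof works for a general $M$ containing a hyperbolic knot $K$ with $x \notin K$: the linearized Legendrian homology $LH(F^1_x, UM \sms \La_K)$, computed through \cref{MainThm.UM-UK} and the Morse flow tree analysis on the unit bundle of the hyperbolic manifold $M \sms K$ (using the Moser colimit of contact forms sketched in the introduction), is isomorphic as a unital ring to the group ring $\ZZ[\pi_1(M\sms K, x)]$, with Reeb chords labelling based homotopy classes of loops, with concatenation of Reeb chords corresponding to concatenation of based loops, and with the unit given by the shortest chord of $F^1_x$, which matches the constant loop at $x$. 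Composing with the identification of the previous paragraph and tracking the convention that reverses the order of concatenation when one passes from Reeb chords to based loops produces the opposite group law, yielding the unital ring isomorphism onto $\ZZ[\bar\pi_1(M\sms K)]$.

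The main technical obstacle is the first step: while the equivalence between the wrapped Floer homology of an exact Lagrangian filling and the linearized Legendrian contact homology of its Legendrian boundary is classical in the standard Liouville setting, here one works in the sutured framework of \cite{CGHH10_Sutures}, so the neck-stretching, transversality, and SFT compactness arguments must be carried out in compatibility with the perturbations used to realise $UM \sms \La_K$ as a sutured contact manifold, and in conjunction with the colimit of contact forms used in the proof of \cref{MainThm.complete_invariant}. A secondary subtlety is the careful bookkeeping of orientations and gradings that pins down the product as the \emph{opposite} group law rather than the naive one.
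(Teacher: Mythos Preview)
Your overall strategy---reducing the stopped wrapped Floer homology to a Legendrian computation in $UM \sms \La_K$ and then invoking \cref{MainThm.UM-UK} and the hyperbolic computation---is the right shape, but there is a genuine gap concerning the unit.

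You claim a quasi-isomorphism $HW^*(F,F;\La_K) \simeq LH(F^1_x,\, UM \sms \La_K)$ of \emph{unital} rings, and that ``the unit [is] given by the shortest chord of $F^1_x$, which matches the constant loop at $x$''. But for a \emph{single} fiber $F^1_x$ there is no such chord: the constant loop at $x$ has zero length and does not lift to a Reeb chord. Indeed, \cref{thm.LH_U(M-K)=gp_ring} states explicitly that $LH(\La_0, V_0)$ is the \emph{non-unital} ring $\ZZ_2[\bar\pi_1^*(M\sms K)]$, generated by $c_\g$ for $\g \neq 1$. So the single-fiber Legendrian homology misses exactly the class you need for the unit, and the isomorphism you assert cannot hold as stated. (In the usual filling/augmentation picture the discrepancy is accounted for by the interior Morse generators on $F$, which you drop when passing to ``generators at the cylindrical end''.)

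The paper sidesteps this by a concrete geometric choice of perturbation: instead of the standard Hamiltonian pushoff, it takes $\tilde F = F_{\tilde x}$ to be the fiber over a nearby point $\tilde x$. Then $F \cap \tilde F = \emptyset$, so the wrapped complex is generated \emph{only} by Reeb chords from $F^1_x$ to $F^1_{\tilde x}$, and these are in bijection with \emph{all} elements of $\pi_1(M\sms K)$, including the identity (realised by the short geodesic from $x$ to $\tilde x$). One then applies \cref{MainThm.UM-UK} to the two-component Legendrian $F^1_x \cup F^1_{\tilde x}$ and repeats the action and Morse-flow-tree arguments verbatim. This two-fiber trick is the missing idea in your proposal; once you adopt it, the rest of your outline goes through.
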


\subsection*{Similar results}
\underline{\it Conormal of knots} :
Several results on knot conormals have appeared in the recent years, most notably Shende proved using sheaf techniques :
\begin{thm*}[\cite{She16_Conormal_torus_knot_inrt}]
 Given two knots $K_0$, $K_1$ in $\RR^3$, if $\La_{K_0}$ and $\La_{K_1}$ are Legendrian isotopic then the knots are smoothly isotopic, up to mirror and orientation reversal.
\end{thm*}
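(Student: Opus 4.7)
The plan is to leverage the machinery of this paper to extract the oriented diffeomorphism type of $S^3 \sms K$ from the Legendrian isotopy class of $\La_K \subset (ST^* S^3, \xi_\st)$, and then invoke Gordon--Luecke to recover the knot up to mirror. Thus, rather than a sheaf-theoretic argument, one would obtain a proof running through sutured contact topology.

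First, a Legendrian isotopy from $\La_{K_0}$ to $\La_{K_1}$ extends, via ambient isotopy extension together with the Weinstein Legendrian neighborhood theorem, to an oriented contactomorphism of sutured contact manifolds
\[ \big(ST^* S^3 \sms \N(\La_{K_0}), \xi_\st\big) \simeq \big(ST^* S^3 \sms \N(\La_{K_1}), \xi_\st\big). \]
This is the only point at which the Legendrian isotopy hypothesis is used, and it converts the problem into one about the sutured complement.

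Second, I would apply the geometric construction behind \cref{MainThm.UM-UK} in reverse. The paper realises $UM \sms \La_K$ as obtained from $U(M \sms K)$ by attaching a canonical circular handle $ST^* K \times S^{n-k-1} \times D^{2(n-k)}$ along a neighbourhood of a submanifold of the dividing set, after an explicit perturbation of the contact form that displaces the suture. Detaching this handle from $ST^* S^3 \sms \La_{K_i}$ recovers $ST^*(S^3 \sms K_i)$ as a sutured contact manifold. If one can show that this handle admits an intrinsic characterisation inside $ST^* S^3 \sms \La_K$, so that the decomposition is canonical up to contact isotopy and transports across the ambient contactomorphism of the previous step, this yields an oriented contactomorphism $ST^*(S^3 \sms K_0) \simeq ST^*(S^3 \sms K_1)$. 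Third, one appeals to the principle that the oriented contactomorphism type of $ST^* N$, for $N$ a compact oriented $3$-manifold with non-empty boundary, determines $N$ up to orientation-preserving diffeomorphism. One route is to identify the Liouville filling $T^* N$ together with its zero section via Lagrangian uniqueness in the style of Abouzaid--Kragh; alternatively one can extract $\pi_1(N)$ with its peripheral structure from the wrapped Floer computation in \cref{MainCor.stopped_Floer}, now applied in reverse to reconstruct the smooth base from its sutured Floer invariants. Combining this with Gordon--Luecke's theorem that knots in $S^3$ are determined by the oriented diffeomorphism type of their complement, one concludes that $K_0$ and $K_1$ agree up to mirror and orientation reversal, exactly the stated ambiguity.

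The main obstacle is the second step. As stated, \cref{MainThm.UM-UK} delivers only a quasi-isomorphism of Legendrian contact homology algebras, whereas the proof of Shende's theorem outlined above requires the stronger, genuinely geometric statement that $ST^*(S^3 \sms K)$ sits canonically inside $ST^* S^3 \sms \La_K$ after removing an intrinsically characterised circular handle. One must therefore track the explicit suture-displacing perturbation and handle attachment in the paper's proof, and verify that the resulting decomposition is natural enough to be preserved by an arbitrary ambient contactomorphism rather than reconstructed from auxiliary choices. A secondary difficulty lies in the final reduction to smooth topology: recognising $N$ from its sutured $ST^* N$ requires either invoking deep Lagrangian rigidity results for cotangent bundles with convex boundary, or upgrading \cref{MainCor.stopped_Floer} to a reconstruction statement including the peripheral subgroup — a refinement the paper explicitly flags as beyond its current scope.
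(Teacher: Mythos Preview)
The paper does not prove this theorem: it is quoted without proof in the ``Similar results'' section as a result of Shende, established in \cite{She16_Conormal_torus_knot_inrt} by sheaf-theoretic methods. There is therefore no proof in the paper to compare your proposal against.

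Your proposal is in any case not a proof, as you yourself acknowledge. The second step --- intrinsically characterising the circular handle inside $ST^*S^3 \sms \La_K$ so that the decomposition transports under an arbitrary contactomorphism --- is not supplied by anything in the paper, which only shows that such a handle attachment exists after specific choices of contact form and perturbation, not that it is canonical. The third step is equally open: reconstructing $N$ from the contactomorphism type of $ST^* N$ as a sutured manifold requires either cotangent-fibre rigidity or the peripheral-subgroup refinement, both of which the paper explicitly places outside its scope. For hyperbolic knots the paper does prove the closely related \cref{MainThm.complete_invariant}, but by a different route: it extracts the \emph{algebraic} invariant $LH(F^1_x; US^3 \sms \La_K)$ with its product, identifies it with the group ring of $\pi_1(S^3\sms K)$, and then invokes Mostow rigidity and Gordon--Luecke. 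This avoids both of your obstacles by never attempting to recover $ST^*(S^3\sms K)$ geometrically, at the price of covering only hyperbolic knots. Your geometric strategy, if its gaps could be filled, would in principle apply to all knots, but as written it is a programme rather than a proof.
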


This result has been reproved using Legendrian contact homology :
\begin{thm*}[\cite{ENS16_Complete_knot_invrt}] Given a knot $K \subset \RR^3$ and $F^1_x$ the unit fiber over $x \notin K$, the Legendrian contact homology of $F^1_x \cup \La_K$, with its product structure, is a complete knot invariant.
 \end{thm*}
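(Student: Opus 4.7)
The plan is to compute the Chekanov--Eliashberg DGA of the Legendrian $\La = F^1_x \cup \La_K \subset ST^*\RR^3$ (with the contact form induced by the Euclidean metric) and show that its multiplicative structure recovers $\pi_1(\RR^3 \sms K, x)$ together with the peripheral subgroup, at which point classical 3-manifold topology concludes.

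First, I would identify the Reeb chord generators of $\La$. They split into three families: pure $F^1_x$-chords (geodesic loops based at $x$), pure $\La_K$-chords (binormal chords of $K$), and mixed chords between $F^1_x$ and $\La_K$ in both directions, corresponding to geodesic segments from $x$ to $K$. The key combinatorial object is the mixed sector: concatenating an $F \to \La_K$ chord with a $\La_K \to F$ chord produces, via the holomorphic disks counted by the DGA product/differential, an element expressible in pure $F$-chords, i.e.\ in loops at $x$ in $\RR^3 \sms K$.

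Next, using Ekholm's Morse flow tree correspondence, I would match these holomorphic disks with concatenated gradient paths through $x$ in $\RR^3 \sms K$. This realises the relevant part of the LCH, together with its product, as Ng's cord algebra, which is known to be identified with the group ring $\ZZ[\pi_1(\RR^3 \sms K, x)]$ after choosing an augmentation (e.g.\ the canonical one coming from the exact Lagrangian filling of $F^1_x$ by the cotangent fiber). The meridian of $K$ is detected as the shortest mixed chord cycle encircling the knot, and the longitude is encoded in the Legendrian isotopy class of $\La_K$ itself, namely in how the conormal torus wraps $K$; it is extractable from the action filtration on $\La_K$-pure chords together with their interaction with the mixed sector.

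Finally, having recovered the triple $(\pi_1(S^3 \sms K), \text{meridian}, \text{longitude})$, Waldhausen's theorem identifies the complement $S^3 \sms K$ as a 3-manifold, and Gordon--Luecke then recovers $K \subset S^3$ up to mirror. The main obstacle is the rigorous identification of the algebraic DGA product on the mixed sector with topological concatenation of cords and loops: this requires controlling moduli spaces of holomorphic disks with several mixed punctures and is the technical heart of the ENS16 computation. A secondary difficulty is the clean algebraic extraction of the longitude from the DGA, since, unlike the meridian, it is not visible from short Reeb chord data alone and must be read off from the framing information carried by $\La_K$ as a Legendrian torus.
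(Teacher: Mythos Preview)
This statement is not proved in the present paper: it is quoted as a theorem of \cite{ENS16_Complete_knot_invrt}, and the paper only summarises the strategy in two sentences. According to that summary, the main tool is the isomorphism of \cite{CELN16_Knot_&_cord_alg} between the Legendrian homology of the conormal and the \emph{string homology} of the knot; the rest is algebraic, extracting the knot group together with its peripheral subgroup, which is a complete invariant by Waldhausen \cite{Wal68_Irred_3-mfds}.

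Your outline is broadly in the right spirit and lands on the same endgame (recover $\pi_1$ with peripheral data, then invoke Waldhausen and Gordon--Luecke). The main divergence is in the bridge you propose: you want to go directly from holomorphic disks to the cord algebra via Morse flow trees, whereas the actual proof in \cite{ENS16_Complete_knot_invrt} routes through the string topology model of \cite{CELN16_Knot_&_cord_alg}. These are related but not the same machinery, and the string homology isomorphism is what makes the computation tractable for \emph{all} knots, not just those where flow trees behave well. Your own caveat at the end --- that controlling moduli spaces with several mixed punctures ``is the technical heart of the ENS16 computation'' --- is exactly the point: that control is obtained via string topology, not via a direct flow-tree argument. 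Likewise, the extraction of the longitude is not done through action filtrations as you suggest, but algebraically from the product structure once the string/cord model is in hand. So your proposal identifies the correct skeleton and the correct difficulties, but substitutes a different (and, as stated, incomplete) mechanism for the key isomorphism.
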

 The main tool in that paper is the isomorphism from \cite{CELN16_Knot_&_cord_alg} between the Legendrian homology of the conormal, and the string homology of the knot. The  remaining of the proof is of algebraic nature, to extract the knot group (with its peripheral subgroup) which is a known complete invariant by \cite{Wal68_Irred_3-mfds}. 

\begin{rmk}
For a knot $K \subset \RR^3$, the Legendrian homology of its conormal has been explicitly computed in \cite{EENS11_Knot_hom}, and coincide with the combinatorial invariant defined in \cite{Ng04_Framed_knot_hom}. 
\end{rmk}

Our \cref{MainThm.complete_invariant} can be seen as yet another proof of the previous results, in the case of an hyperbolic knot. However we want to emphasise that the computations in \cite{ENS16_Complete_knot_invrt} \cite{EENS11_Knot_hom} \cite{Ng04_Framed_knot_hom} rely on a presentation of the knot as the closure of a braid. By contrast, we use in a fundamental way the geometry of the complement of the knot, which is extremely simple in the hyperbolic case. \\

\underline{\it Stopped perspective} : In \cite{Syl16_Stops}, Sylvan defined the homology of a Lagrangian in a Liouville domain, stopped by a Legendrian in the boundary. This definition has then been extended to the context of Fukaya categories for any kind of stop in \cite{GPS17_Fuk_sectors}, using localisation of $\A_\ity$-categories. More recently a different perspective has been given in \cite{AsEk21_Chekanov-Eliashberg_dga_singular} for singular Legendrians, using the handles introduced in \cite{Avd12_Connect_sum_cobord}.

Given a Legendrian $\La \subset (V, \xi)$, one can interpret the sutured contact homology of $V \sms \La$ as the contact homology of $V$ stopped at $\La$, which should coincide with the previous definitions when the two objects are defined, see \cref{ssec.Stoped_POV} for a more precise discussion.

Another result about conormal stops appeared in the literature, in the usual symplectic setting : in \cite{As21_Fiber_Floer_conormal_stops} the following theorem, which generalise \cref{MainCor.stopped_Floer}, is proven.
\begin{thm*}[\cite{As21_Fiber_Floer_conormal_stops}] Consider a submanifold $K \subset M$, and a fiber $F_x$ disjoint from $\La_K$. Then there is an isomorphism between 
\begin{itemize}
    \item the Floer homology of $F \subset D^*M$, stopped at $\La_K$, with its $\A_\ity$-structure ;
    \item the homology of the space of loops in $M$ based at $x$, with the Pontryagin product.
\end{itemize}
\end{thm*}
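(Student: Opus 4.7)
The strategy is to reduce the stopped Floer computation to an unstopped wrapped Floer computation on the cotangent bundle of the complement $M \sms K$, and then to invoke the Abbondandolo--Schwarz--Abouzaid theorem identifying the wrapped Floer homology of a cotangent fibre with the homology of the based loop space. This mirrors the paper's strategy for \cref{MainCor.stopped_Floer}, but is executed in the wrapped Fukaya category rather than in the sutured Legendrian category. The first step would be a Liouville analogue of \cref{MainThm.UM-UK}: I would argue that the Liouville sector $(D^*M, \La_K)$ is equivalent, as a Liouville sector, to the Liouville manifold $D^*(M \sms K)$, via an equivalence preserving the fibre $F_x$ together with a neighbourhood of it (chosen disjoint from $K$). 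The topological model already appears in the paper: $UM \sms \La_K$ differs from $U(M \sms K)$ only by a circular handle attached along a neighbourhood of the suture, which is invisible to $F_x$. Lifting this contact construction back to the Liouville filling and matching it with the sector formalism of \cite{GPS17_Fuk_sectors} should produce a quasi-equivalence of wrapped Fukaya $\A_\ity$-categories
\[ \W(D^*M, \La_K) \simeq \W\bigl(D^*(M \sms K)\bigr) \]
under which $F_x$ maps to $F_x$.

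The second step is to invoke the Abbondandolo--Schwarz--Abouzaid theorem for $N = M \sms K$: the wrapped Floer $\A_\ity$-algebra of $F_x \subset D^*N$ is $\A_\ity$-quasi-isomorphic to the Pontryagin chain algebra $C_{-*}(\Omega_x N)$. Composing with the equivalence of the first step yields the claimed $\A_\ity$-quasi-isomorphism, and passing to homology gives the Pontryagin ring $H_*(\Omega_x(M\sms K))$. Compatibility of products is automatic since each identification is $\A_\ity$-functorial; on the Floer side the pair-of-pants product matches the concatenation-of-loops product on the topological side through Abouzaid's moduli of half-strips.

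The main obstacle is the first step: upgrading the ``stop equals complement'' heuristic to a genuine quasi-equivalence of wrapped Fukaya categories, compatibly with the $\A_\ity$-structure. Morally this is natural from the perspective of \cite{GPS17_Fuk_sectors}---stopping at $\La_K$ obstructs wrapping trajectories in the same way that removing a neighbourhood of $K$ does---but carrying it out requires either a controlled deformation of the Liouville structure across the handle region (the symplectic analogue of the contact perturbation performed in the paper), or an explicit comparison of generators and pseudo-holomorphic strips on both sides. A convenient technical input would be the handle-attachment model of \cite{AsEk21_Chekanov-Eliashberg_dga_singular}, which formalises the effect of a conormal-type stop in a way compatible with taking complements; combined with the paper's contact-side argument, this should give the desired Liouville equivalence.
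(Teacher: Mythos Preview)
This statement is not proved in the paper: it is quoted in the ``Similar results'' part of the introduction as a theorem of Asplund \cite{As21_Fiber_Floer_conormal_stops}, cited there precisely because it generalises the paper's own \cref{MainCor.stopped_Floer}. Consequently there is no proof in the paper to compare your proposal against.

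As for the proposal itself, what you have written is a reasonable strategy sketch rather than a proof. The first step, upgrading the contact-level equivalence $UM \sms \La_K \simeq U(M\sms K)$ to a quasi-equivalence of wrapped Fukaya categories $\W(D^*M, \La_K) \simeq \W(D^*(M\sms K))$, is exactly the nontrivial content of Asplund's paper, and you correctly identify it as the main obstacle. Your proposal does not actually carry this out; saying that ``lifting this contact construction back to the Liouville filling\ldots\ should produce a quasi-equivalence'' and that the handle model of \cite{AsEk21_Chekanov-Eliashberg_dga_singular} ``should give the desired Liouville equivalence'' is a plan, not an argument. In particular, the paper's contact-side perturbation is designed to control Reeb dynamics and holomorphic curves in the sutured setting, and it is not automatic that this transfers to an $\A_\infty$-equivalence of partially wrapped categories. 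The second step (Abbondandolo--Schwarz--Abouzaid) also requires care for open manifolds such as $M\sms K$; the existing literature treats closed $M$, so one must either work with a compact manifold with boundary or invoke a suitable extension.
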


\underline{\it Unit bundle of non-compact hyperbolic manifolds} : In \cite{BKO19_Formal_boundary_hyperb_knot_compl}, the cotangent bundle of the complement of an hyperbolic knot is studied from the point of view of Fukaya categories.
The most striking difference with the sutured approach is that, to define their invariants, they use the contact form induced by the hyperbolic metric on $M \sms K$. More precisely, they make use of the fact that, while the hyperbolic contact form is not adapted to the sutured contact manifold obtained from $U(M \sms K)$, its lift is adapted to the unit bundle of $\HH^3$, the standard hyperbolic space. The colimit argument of \cref{ssec.colim} shows that those two definitions are equivalent for Legendrian homology, see \cref{rmk.BKO} for more details.

\subsection*{Organisation}
In \cref{sec.Contact_symplectic} we review some classical results and constructions ib contact and symplectic geometry. In \cref{sec.LCH} we present the sutured framework : following \cite{CGHH10_Sutures}, we relate it to contact manifolds with convex boundary, and define the sutured contact homology of a Legendrian without boundary (for a more general construction see \cite{Dat20_PhD_Homologies_leg_sut_applications} \cite{Dat22_Sutured_Legendrian_homology_2-braids}). We also discuss the stopped perspective, and the correspondence between the different points of view. 

In section \cref{sec.U(M-K)} we start with a general presentation for unit bundles of manifolds with boundary, and then compute the Legendrian homology of a unit fiber in $U(M \sms K)$, for an hyperbolic knot $K$. In \cref{ssec.forms} we define a family of contact forms stretching the boundary, and we use a colimit argument in \cref{ssec.colim} to prove that we can actually use the contact form induced by the hyperbolic metric. Finally in \cref{ssec.prod} we compute the product on Legendrian homology.

In \cref{sec.UM-UK}, we relate $U(M\sms K)$ to $UM \sms \La_K$ via the gluing of a circular handle.
We explicit charts in which the computations are simpler, and we perturb the contact form so that the handles are glued away from the dividing set, which imply \cref{MainThm.complete_invariant}. 

Finally in \cref{sec.stopped_Floer}, we  present the higher dimensional situation, and prove our main theorems. \\


\paragraph{Acknowledgements :} This paper was part of a PhD thesis at the University of Nantes, and the author would like to thank his advisor Vincent Colin who introduced him to this subject, suggested several fascinating projects and was always ready for numerous questions. The author also thanks Alexandre Jannaud for the amount of interesting discussions along the years.  The author is currently supported by the Wallenberg foundation through the grant KAW 2019.0507, and is grateful for their help.

\tableofcontents 
~\\~\\

\section{Contact geometry and Legendrian homology} \label{sec.Contact_symplectic}
\subsection{Contact and symplectic geometry} 
Let us first recall some standard definitions. 
\paragraph{Contact geometry.} A {\it contact manifold} $(V, \xi)$ is a odd-dimensional manifold $V$ endowed with maximally non-integrable cooriented hyperplane field $\xi$, called a {\it contact structure}. A {\it Legendrian submanifold} $\La \subset (V^{2n+1, \xi})$ is a $n$-dimensional submanifold such that $T\La \subset \xi$.

Given a contact manifold $(V^{2n+1}, \xi)$, a {\it contact form} is a 1-form such that $\xi = \ker \l$. Note that a form defines a contact structure iff it satisfies the contact condition $\l \we (d\l)^n >0$. The {\it Reeb vector field} $R$ of a contact form is defined by the equations $\i_R d\l = 0$ and $\l(R)=1$. It is transverse to the contact structure, and its flow preserve it.

\paragraph{Symplectic geometry.} A {\it Liouville domain} $(W^{2n}, \b)$ is a even-dimensional manifold endowed with a 1-form $\b$ such that
\begin{itemize}
    \item $\w = d\b$ is a symplectic form, ie $(d\b)^n > 0$ ;
    \item the {\it Liouville vector field} $Y$, defined by $\i_Y d\b = \b $, is positively transverse to $\dd W$.
\end{itemize}
In particular the boundary $(\dd W, \b_{\rest \dd W})$ is a contact manifold. More precisely, on a neighbourhood $\N(\dd W)$ of the boundary $\dd W$ we have
\[ (\N(\dd W), \b) \simeq ((-\e,0]_\t \times \dd W, e^\t \b_{\rest \dd W}), \]
where the coordinate $\t$ is given by the flow of the Liouville vector field.

A {\it Lagrangian submanifold} $L \subset (W^{2n}, \b)$ is a $n$-dimensional submanifold such that $\w_{\rest TL} = 0$. It will be called 
\begin{itemize}
    \item {\it exact} if $\b_{\rest L}$ is an exact form, in other words if there exists a function $f : L \ra \RR$ such that $\b_{\rest L} = df$ ;
    \item {\it cylindrical} if its boundary is a Legendrian $\La \subset \dd W$, and in a neighbourhood we have 
    \[ \N(\dd L) \simeq (-\e,0] \times \La. \] It $L$ is also exact, we usually ask of the primitive $f$ to be constant at the boundary (note that $f$ is automatically locally constant).
\end{itemize}

\paragraph{From contact to symplectic manifolds, and vice versa.} Given a Liouville domain $(W, \b)$, its {\it contactisation} is the contact manifold $\big(\RR_t \times W, \ker(dt + \b)\big)$, and the Reeb vector field of the contact form $dt + \b$ is $\dd_t$. 
Given an exact Lagrangian $L \subset (W, \b)$, its Legendrian lift is \[\La = \{ (-f(x), x), x\in L  \} \subset \RR \times W,\] where $f$ is a primitive of $\b_{\rest L}$.

Given a contact manifold $(V, \l)$ (here we choose a specific contact form), its {\it symplectisation} is the symplectic manifold $\big(\RR_s \times V, d(e^s \l)\big)$, and if $\La \subset (V, \ker \l)$ is a Legendrian submanifold, then $\RR \times \La$ is Lagrangian.
More generally, given a Legendrian path $\La_t \subset (V, \xi_t), t \in [0,1]$, we can construct a Lagrangian cobordism in $\RR \times V$ from $(\La_0, V, \xi_0)$ to $(\La_1, V, \xi_1)$.

\subsection{Legendrian homology} \label{ssec.Leg_hom}
The contact manifolds usually studied in the literature are either compact or the contactisation of a Liouville domain, and the Legendrians are compact (although the non-compact case has been shown an increasing interest, eg in \cite{PaRu20_Aug_&_Immers_lagr_fill}). In those settings the {\it Legendrian homology} of a Legendrian $\La \subset (V, \xi)$ have been defined as follows (we make here some simplifying assumptions, see \cite{EES02_LCH_R2n+1} \cite{EES05_LCH_PxR} for more general constructions). Choose a contact form $\l$ such that 
\begin{itemize}
    \item there is no contractible periodic Reeb orbit ;
    \item the {\it Reeb chords} of $\La$, ie the Reeb trajectories joining two points of $\La$, whose set will be denoted $\C(\La)$, are non-degenerate : for any Reeb cord $c$ of action $T$, we have 
    \[(d\phi_R^T)(T_{c(0)} \La) \pitchfork T_{c(T)} \La.\]
    Those chords are graded after choosing some auxiliary data, see \cite{EES05_LCH_PxR} \cite{Bou09_Survey_contact}, and the degree of a chord $c$ will be noted $|c|$.
\end{itemize}
We also pick an adapted almost-complex structure $J$, ie a fiberwise endomorphism on the tangent space of the symplectisation $T(\RR_s \times V)$, such that :
\begin{itemize}
    \item $J$ is $s$-invariant ;
    \item on each $s$-level, $J$ preserve $\xi$ ;
    \item $J$ maps $\dd_s$ to $R_\l$.
\end{itemize}

Given Reeb chords $c_0, c_1, ..., c_k$, we denote $\M_{(V, \La)}(c_0, c_1...c_k; J)$ the set of $J$-holomorphic curves $U: (D_k, \dd D_k) \lra (\RR \times V, \RR \times \La)$, positively asymptotic to $c_0$ and negatively asymptotic to $c_1...c_k$, 
up to reparametrisation and $s$-translation. This space is of virtual dimension $|c_0| - \sum_1^k |c_k| - 1$, and has in general the structure of an orbifold. \\

Let $LC(\La, V, \l, J)$ be the graded unital algebra generated by Reeb chords, endowed with the differential defined on the generators by
\[ \dd c_0 = \sum_{k\geq 0}\under{\sum}{c_1,...,c_k \in \C(\La)} \# \M_{(V, \La)}(c_0, c_1...c_k;J) \]
and extended by the Leibniz rule $\dd (ab) = \dd a.b + (-1)^{|a|} a. \dd b$. The fact that $\dd^2 = 0$ result from the Gromov's compactness theorem, see \cite{BEHWZ03_Compactness_SFT}, and the homology of this complex will be denoted $LH(\La; V, \l, J) = \ker \dd / \im \dd$.
Furthermore, using once again Gromov compactness, this homology only depends on $(\La, V, \xi)$, see \cref{thm.LH_invrt} for a more precise statement.

\begin{rmk}
 One should actually add some homological coefficients in $H_1(\La)$, see \cite{EES05_LCH_PxR} for more details. However since our main example will be a $2$-dimensional sphere, it doesn't appear in our formulas.
\end{rmk}

\section{Sutured contact homology}  \label{sec.LCH}
The contact homology of a sutured contact manifold was defined in \cite{CGHH10_Sutures} using a completion into a non-compact contact manifold, in which the holomorphic curves still satisfy Gromov's compactness. This result still holds for a Legendrian without boundary embedded into the sutured manifold, hence its Legendrian sutured homology is also defined. We review here the construction from \cite{CGHH10_Sutures}.

\subsection{Sutured contact manifolds} \label{ssec.Sut_mfd}

Those manifolds with corners were first introduced by \cite{Gab83_Folia} in the topological setting, then used in the $3$-dimensional contact by \cite{CoHo05_Constr_control_Reeb} before being generalised in \cite{CGHH10_Sutures}. We note a slight change of nomenclature compared to that last paper : the convex sutured manifolds will just be called sutured manifolds, while the concave sutured manifold will be negative sutured manifold.

A {\it sutured contact manifold} is a collection $(V, \Ga, \l, \N_0, \psi)$ where ;
\begin{itemize}
    \item the pair $(V, \ker \l)$ is a compact, oriented, contact $(2n+1)$-manifold with corners ;
    \item $\Ga$ is $(2n-1)$-submanifold included in $\dd V$ ;
    \item $\N_0$ is a neighboorhood of $\Ga$, and $\psi$ is a diffeomorphism $\N_0 \lra (-\e, 0]_\t \times [-1,1]_t \times \Ga$ providing coordinates\footnote{As in \cite{CGHH10_Sutures} this product is actually oriented as $[-1,1]_t \times (-\e, 0]_\t \times \Ga$, but we want to think about $t$ (resp. $\t$) as the vertical (resp. horizontal) direction.} ;
\end{itemize}
satisfying the following conditions :
\begin{enumerate}
    \item the boundary of $V$ splits into
        $\dd V \simeq R_+ \under{\cup}{\{1\} \times \Ga} [-1, 1] \times \Ga \under{\cup}{\{1\} \times \Ga} R_-$,
    where the corners are exactly the gluing loci ;
    \item in coordinates, $\Ga = \{\t = t =0\}$ and 
        $ \dd V \cap \N_0 = \{t = 1\} \cup \{\t = 0\} \cup \{t=-1\}$,
    where once again the corners of $V$ are exactly the gluing loci ;
    \item \label{item.dfn_sut_R=Liouv} the pair $(R_+, \l_{\rest R_+})$ (resp. $(R_-, \l_{\rest R_-})$), oriented as the boundary of $V$ (resp. with reversed orientation), is a Liouville domain ;
    \item \label{item.dfn_sut_contact_f} on $\N_0$, in coordinates, we have $\l = C.dt + e^\t \l_\Ga$, where $\l_\Ga$ is a contact form on $\Ga$ (independent of $t$ and $\t$, and without term in $dt$ and $d\t$).
\end{enumerate}
The Liouville forms $\l_{\rest R_\pm}$ will be denoted $\b_\pm$, 
and the Reeb vector field of $(V, \l)$ (resp. $(\Ga, \l_\Ga)$ will be $R$ (resp. $R_\Ga$).
When the collection is unambiguous, a sutured manifold will only be noted $(V, \Ga, \l)$.

\paragraph{Some properties.}
We list some consequences from this definition which will be used afterwards, for more precise proofs we refer to \cite[§2]{CGHH10_Sutures}.

First of all, on the neighbourhood $\N_0$, the Reeb vector field is $C^{-1} \dd_t$ and the contact structure splits into 
\begin{equation} 
    \ker \l = \< \dd_\t, C R_\Ga -  e^\t \dd_t \> \oplus  \ker \l_\Ga,
\end{equation}
where $R_\Ga$ is the Reeb vector field of $(\Ga, \l_\Ga)$.
Also note that the conditions \cref{item.dfn_sut_R=Liouv} and \cref{item.dfn_sut_contact_f} imply that $(R_\pm, \b_\pm)$ are Liouville domains : the Liouville vector field being $\dd_\t$ it is outgoing, hence $R_\pm$ have no negative boundary.

Moreover the condition \cref{item.dfn_sut_R=Liouv} imply that the Reeb vector field of $\l$ is positively (resp. negatively) transerve to $R_+$ (resp. $R_-$). Using its flow we obtain neighbourhoods of $R_\pm$ contactomorphic to $((1-\e, 1]_t \times R_+, Cdt + \b_+)$ and $([-1, -1 +\e]_t \times R_-, Cdt + \b_-)$, extending the coordinate $t$ to a neighbourhood of all of $\dd V$. Note that the coordinate $\t$ can be recovered by integrating the Liouville vector field on each $t$-level, since on $\N_0$ it is $\dd_\t$.

\subsection{Contact manifold with convex boundary} \label{ssec.Mfds_cvx_boundary}
An hypersurface $\Si \subset (V, \xi)$ is {\it convex} if there exist a contact vector field $X$ (ie a vector field whose flow preserve $\xi$) transverse to $\Si$. A direct consequence is that the set 
\[ \Ga_X = \{ x \in \Si, X_x \in \xi_x \}  \]
is a contact submanifold of codimension $2$, dividing $\Si$ into two regions $R_\pm$. 
This configuration is actually a characterisation of a convex hypersurface :

\begin{lem} \cite[Lemme 2.2]{CGHH10_Sutures} \label{lem.cvx_adapt}
Let $\Sigma^{2n} \subset (V^{2n+1}, \xi)$ be a closed hypersurface. $\Sigma$ is convexe if and only if there exists an orientation of $\Sigma$, a submanifold $\Gamma^{2n-1} \subset \Sigma$ and a contact form $\lambda$  of kernel $\xi$ such that
\begin{itemize}
\item $(\Gamma, \xi \cap T\Gamma)$ is a contact manifold, oriented such that the contact form is positive.
\item $\Gamma$ splits $\Sigma$ alternating parts $R_\pm$, such $R_+$ induces on $\Gamma$ the previous orientation (ie $\Gamma = \dd R_+ = - \dd R_-$).
\item The Reeb vector field associated to $\l$ is positively (resp. negatively) transverse to $R_\pm$.
\end{itemize}
\end{lem}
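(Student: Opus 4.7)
The plan is to prove both implications by producing a normal form in a tubular neighbourhood of $\Si$.

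For the direct implication, I would assume $\Si$ is convex with contact vector field $X$ transverse to it, and use the flow of $X$ to identify a neighbourhood with $(-\e,\e)_u \times \Si$, $X = \dd_u$. Since $X$ is contact, the ODE $L_X \alpha = f\alpha$ satisfied by any defining form $\alpha$ for $\xi$ integrates to produce a $u$-invariant representative $\l_0 = g\, du + \b$, with $g \in C^\infty(\Si)$, $\b \in \Omega^1(\Si)$, and $\Ga_X = \{g = 0\}$. Expanding the contact condition gives
\[
\l_0 \we (d\l_0)^n = du \we \bigl(g\,(d\b)^n + n\,\b \we dg \we (d\b)^{n-1}\bigr) > 0,
\]
and evaluating it along $\Ga$ forces both $dg|_\Ga \neq 0$ and $\b \we (d\b)^{n-1}|_\Ga > 0$. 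Hence $\Ga$ is a smooth codim-one submanifold of $\Si$ and $\b|_\Ga$ is a contact form. Orienting $\Si$ and the alternating pieces $R_\pm := \{\pm g > 0\}$ by the sign of $g$ yields the first two bullet points.

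The Reeb of $\l_0$ need not however be transverse to $R_\pm$: already in the local model $\l_0 = x\, du + dz$ on $\RR^3$ the Reeb is $\dd_z$, tangent to $\Si$. To produce an adapted contact form I would rescale $\l_0$ by an $X$-invariant positive factor that is smooth across $\Ga$ and, away from $\Ga$, forces the new Reeb to acquire a $\dd_u$-component with the sign of $g$; the smoothness across $\Ga$ is controlled by $dg|_\Ga \neq 0$.

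For the converse, given the adapted $\l$, $\Ga$, $R_\pm$, I would construct the transverse contact vector field from a contact Hamiltonian. Pick $h \in C^\infty(\Si)$ vanishing transversally on $\Ga$ with $\pm h > 0$ on $R_\pm$, and extend it to $H : V \ra \RR$ by propagating along the Reeb flow on the collar of each $R_\pm$ (transverse by hypothesis), patching near $\Ga$ using the local contact model. The contact vector field $X_H$ defined by $\l(X_H) = H$ and $\i_{X_H} d\l = dH(R)\l - dH$ decomposes on $\Si$ as $X_H = h R + Y$ with $Y \in \xi$; on $R_\pm \sms \Ga$ the term $hR$ gives transversality to $\Si$, and near $\Ga$ the patching of $H$ ensures that $Y$ is not tangent to $\Si$. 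By construction $\Ga_{X_H} = \Ga$. The main obstacle sits in the direct implication: producing the rescaling above that achieves Reeb transversality to $R_\pm$ is the technical heart of the lemma, and is where the argument genuinely goes beyond Giroux's original three-dimensional statement.
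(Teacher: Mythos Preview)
The paper does not give its own proof of this lemma: it is stated with the citation \cite[Lemme 2.2]{CGHH10_Sutures} and used as a black box. So there is no ``paper's proof'' to compare your attempt against.

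That said, your outline follows the standard route one finds in the cited reference and in Giroux's original work. A few comments on the sketch itself. For the direct implication, your normal form $\l_0 = g\,du + \b$ and the analysis of the contact condition at $\Ga$ are correct, and you rightly identify that the rescaling step is where the content lies; the usual choice is $\l = \frac{1}{|g|}\l_0$ away from $\Ga$, patched smoothly using that $dg|_\Ga \neq 0$ (so $1/g$ has a first-order pole one can absorb into the $du$ term). For the converse, your contact-Hamiltonian construction is workable but the delicate point you gloss over---why $Y$ contributes transversality to $\Si$ near $\Ga$---needs the local model: near $\Ga$ one has $\l \sim C\,dt + e^\t \l_\Ga$ with Reeb $\dd_t$, and the Liouville direction $\dd_\t$ (which is in $\xi$) is transverse to $\Si$. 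Choosing $H$ so that $X_H$ picks up a $\dd_\t$-component there is what makes the argument close. Your plan is correct in spirit; turning it into a proof requires writing down these two local computations explicitly.
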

A contact form satisfying those conditions will be called {\it adapted} to the convex boundary. The last condition is equivalent to requiring that $(R_+, \lambda)$ and $(-R_-, \lambda)$ are Liouville domains, where $R_+$ is oriented as $\Sigma$ and $-R_-$ denotes $R_-$ endowed with the opposite orientation. \\

Moreover, according to \cite[Lemma 4.1]{CGHH10_Sutures}, if $(V, \xi)$ is a manifold with convex boundary, and $X$ is a contact vector field transverse to the boundary, then there exist a contact form $\l$ and neighbourhoods of the suture $\N_0 \subset \N_1$ such that $(V \sms \N_0, \l, \N_1 \sms \N_0)$ is a sutured contact manifold.

\subsection{Sutured contact homology} \label{ssec.LH}

To define contact homology in a sutured manifold $(V, \Ga, \l)$, the first step is to complete the manifold with corners into a non-compact contact manifold $(V^*, \l^*)$. Using the above neighbourhood of the boundary, we glue the positive (resp. negative) contactisation of $(R_+, \b_+)$ (resp. $(R_-, \b_-)$) along $R_+$ (resp. $R_-$). The resulting boundary is $\RR \times \Ga$, on which we glue the contactisation of the positive symplectisation of $\Ga$ :
\[ V^* = V \cup [1, \ity)_t \times R_+ \cup (-\ity, -1]_t \times R_- \cup [0, \ity)_\t \times \RR_t \times \Ga,  \]
and the contact form is extended by $\l^* = dt + \b_\pm$ if $\pm t \geq 1$ and $\t < 0$, and by $dt + e^\t \l_\Ga$ if $\t \geq 0$. Note that for $\pm t>1$, each $t$-level is the completion of the Liouville domain $(R_\pm, \b_\pm)$. The Reeb vector field of $\l^*$ will still be denoted $R$.

\begin{figure}[h!]
\center
\def\svgwidth{8cm}
\import{./Dessins/}{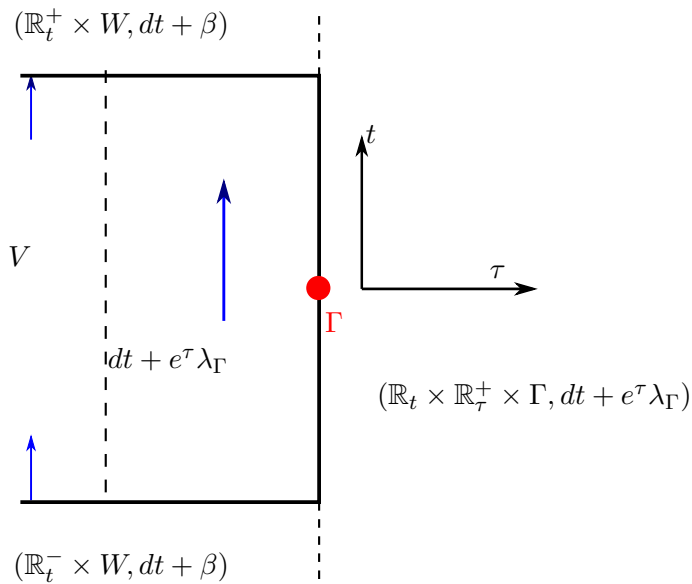}
\caption{Completion of a sutured contact manifold. }
\label{fig.completion_sutured}
\end{figure}

\paragraph{Almost complex structure.} To achieve Gromov's compactness in this non-compact setting, the compatibility conditions must be strengthened. An {\it almost complex structure} on a manifold $W$ is a bundle endomorphism $J \in \text{End}(TW)$ such that $J^2 = - \Id$. On the symplectisation of the completion $(V^*, \l^*)$, ie $W = \RR_s \times V^*$ endowed with the Liouville form $\b = e^s \l^*$, an almost complex structure $J$ will be called {\it tailored} is it satisfy the following :
\begin{itemize}
    \item $J$ is $d\b$-admissible, ie $d\b(., J. )$ is a metric ; 
    \item $J$ is $\l^*$-adapted : it is $s$-invariant, preserve $\ker \l^*$ on each $s$-level, and map $\dd_s$ to $R$ ;
    \item $J$ is $t$-invariant on a neighbourhood of $V^* \sms \mr V$ ;
    \item the projection of $J$ to $T\hat R_\pm$ parallelly to $s$ and $t$ is adapted to the completion of $(R_\pm, \b_\pm)$ : this projection is $d\b_\pm$-admissible, and $\l_\Ga$-adapted on $\t\geq 0$.
\end{itemize}
Note that on the domain $\t \geq 0$, a tailored almost complex structure preserves $\xi_\Ga = \ker \l_\Ga$, and is determined by its restriction to $\xi_\Ga$ (which is independent of $s, t$ and $\t$), while on the domain $\{\pm t \geq 1, \t \leq 0\}$ it is determined by its projection to $T\hat R_\pm$.

\paragraph{Legendrian homology.}
Let $\La \subset (V, \l, \N_0)$ be a Legendrian without boundary, embedded in a sutured contact manifold. 
 In this paper we make the following assumptions, although more general settings are possible, see \cite{CGHH10_Sutures} \cite{EES02_LCH_R2n+1} \cite{EES05_LCH_PxR} :
\begin{itemize}
    \item the sutured manifold has no contractible Reeb orbit ;
    \item the group $H_1(V)$ is free ;
    \item the first Chern class $c_1(\ker \l)$ vanishes ;
    \item the Legendrian admits no (homotopically) contractible Reeb chord ;
    \item all Reeb chords are non-degenerate, see \cref{ssec.Leg_hom}
    \item $H_1(\La)$ is trivial.
\end{itemize}

Choose an tailored almost complex structure $J_0$, and denote by $LC(\La; V, \l, \N_0)$ the $\ZZ_2$-module generated by Reeb chords, ie Reeb trajectories joining two points of $\La$. Each chord is canonically $\ZZ_2$-graded, and after fixing a Maslov potential this grading lift to $\ZZ$, see \cite{EES02_LCH_R2n+1} for details. 
We can now define a differential on $LC(\La; V, \l, \N_0, J_0)$ by counting rigid holomorphic strips in the symplectisation of $(V^*, \l^*)$, as well as continuation morphisms.

\begin{thm}[\cite{CGHH10_Sutures}] \label{thm.LH_invrt}
 Counting holomorphic strips define a differential. Moreover a path $(\La_t, V_t, \l_t, \N_t, J_t)$  of Legendrians in sutured contact manifolds induces a quasi-isomorphism \[LC(\La_0; V_0, \l_0, \N_0, J_0) \lra LC(\La_1; V_1, \l_1, \N_1, J_1).\]
\end{thm}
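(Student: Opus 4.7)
The plan is to reduce both claims to an SFT-style Gromov compactness statement in the non-compact completion $(V^*, \l^*)$, following \cite{BEHWZ03_Compactness_SFT}. The new point compared to the closed or contactisation case is that, although $V^*$ has three kinds of ends (two Liouville ends in the $\pm t$-direction and one contactisation end in the $\t$-direction), a tailored almost complex structure forces all relevant holomorphic curves to remain in a compact region of $\RR \times V^*$.

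First, I would establish a confinement result. On the end $\{\t \geq 0\}$, the contact form is $dt + e^\t \l_\Ga$, the Reeb vector field is $\dd_t$, and a tailored $J$ preserves $\ker \l_\Ga$ and is $s$, $t$, $\t$-independent there. A standard maximum principle for the plurisubharmonic function $e^\t$ on the $(s, t)$-slices (combined with the fact that $\RR \times \La$ is contained in $\{\t = \t_0\}$ for some $\t_0 < 0$) prevents a $J$-holomorphic strip with asymptotics in $\mr V$ from reaching large $\t$. Similarly, on each Liouville end $\{\pm t \geq 1, \t \leq 0\}$, the projection of $J$ to $T\hat R_\pm$ is adapted to the completion of $(R_\pm, \b_\pm)$; a $t$-invariance and monotonicity argument on $s$ and on the $R_\pm$-radial coordinate bounds the image of any curve with asymptotics inside $\mr V$. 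The hypotheses of no contractible Reeb orbit and no contractible Reeb chord rule out orbit and chord bubbles forming in the ends in a degenerating sequence.

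Given this confinement, the moduli spaces $\M_{(V^*, \La)}(c_0; c_1, \ldots, c_k; J_0)$ admit a standard SFT compactification by height-$N$ pseudo-holomorphic buildings whose pieces live in $\RR \times V^*$ and are asymptotic to Reeb chords of $\La$. The 1-dimensional components therefore have boundary exactly the 2-level broken configurations contributing to $\dd^2 c_0$, and a sign (or mod $2$) count gives $\dd^2 = 0$. Gradings are consistent because $c_1(\ker \l) = 0$, $H_1(V)$ is free, and $H_1(\La)$ is trivial, so a Maslov potential exists and is preserved by breaking.

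For invariance, from a path $(\La_t, V_t, \l_t, \N_t, J_t)$ I would construct an exact Lagrangian cobordism $L \subset \RR_s \times V^*$ between $\RR \times \La_0$ and $\RR \times \La_1$, cylindrical outside a compact set; this is done via the symplectisation trace of the isotopy as in \cref{ssec.Leg_hom}, adapted so that the cobordism stays in the compact part $\mr V$ where the sutured structures are identified. Counting rigid $J$-holomorphic disks with one boundary component on $L$ defines a DGA map $\Phi : LC(\La_0; V_0) \to LC(\La_1; V_1)$; the same confinement argument guarantees the counts are finite and $\Phi$ is a chain map. Stacking the reversed path yields a cobordism isotopic through cylindrical cobordisms to the trivial cylinder, and the parametrised moduli spaces produce a chain homotopy between $\Phi^{-1} \circ \Phi$ and the identity, showing that $\Phi$ is a quasi-isomorphism.

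The main obstacle is the confinement step in the non-compact end $\{\t \geq 0\}$, since this is the genuinely new geometry compared to the contactisation and closed settings; every subsequent argument is a direct transcription of the standard SFT / Chekanov--Eliashberg formalism once confinement is in place.
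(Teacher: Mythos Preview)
Your overall strategy---confinement of holomorphic curves in the completion, followed by standard SFT compactness and cobordism maps---is exactly the route taken in \cite{CGHH10_Sutures}, and the paper simply refers to that source (specifically \cite[Lemma~5.5 and Proposition~5.18]{CGHH10_Sutures}) for the bounds, noting that they carry over verbatim to curves with boundary since $\La$ is closed and sits in the interior.

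There is, however, a genuine gap in your invariance argument. You propose to build the continuation map from an exact Lagrangian cobordism ``adapted so that the cobordism stays in the compact part $\mr V$ where the sutured structures are identified''. But the path $(\La_t, V_t, \l_t, \N_t, J_t)$ allows the sutured data itself to vary, so the completions $V_t^*$ are genuinely different non-compact contact manifolds and there is no common ``compact part'' in which to confine the cobordism. The paper stresses precisely this point: to define the continuation maps one must construct a \emph{specific} symplectic cobordism between the non-compact completions, together with an almost complex structure satisfying additional tailoring conditions near all ends, and the map is \emph{not} defined for an arbitrary exact cobordism. Without this construction the confinement argument you invoke for $\dd^2=0$ does not transfer to the cobordism, and finiteness of the counts defining $\Phi$ is not guaranteed. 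You should either spell out how the cobordism and its $J$ are tailored at the sutured ends (this is the content of \cite[\S 5]{CGHH10_Sutures}), or explicitly cite that construction rather than the generic trace-of-isotopy recipe.
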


In \cite{CGHH10_Sutures} only curves without boundary are examined, however since $\La$ has no boundary the bounds from \cite[Lemma 5.5 and Proposition 5.18]{CGHH10_Sutures} still holds, hence Gromov's compactness holds too (we can restrict to strips-counting when there is no contractible orbit or chord). Also note that, to define the continuation maps, a specific cobordism (between non-compact manifolds) is constructed, and the almost complex structure once again satisfy additional compatibility condition. In particular this map is {\it not} defined for any cobordism.

\paragraph{For manifolds with convex boundary.} 
For a contact manifold $(V, \xi)$ with convex boundary, we argue that its contact homology is well-defined : 
\begin{itemize}
    \item the space of possible dividing sets is connected : given two contact vector fields transverse to $\dd V$, interpolating between them yields contact vector fields which are still transverse to the boundary.
    \item once the dividing is fixed, the space of choices used to make the manifold sutured is contractible.
\end{itemize}
Hence the contact homology of the resulting sutured manifold $(\tilde V, \l)$ only depends on the starting manifold $(V, \xi)$, up to quasi-isomorphism. 
Consequently the contact homology of $(\tilde V, \l)$ 
will be denoted $\CH(V, \xi) = CH(\tilde V, \l)$. 

Similarly if $\La \subset (V, \xi)$ is a compact closed Legendrian which doesn't intersect the boundary $\dd V$, it's Legendrian homology is well-defined, and denoted $LC(\La; V, \xi) = LC(\La; \tilde V, \l$

We will make use of that observation in the later sections, as convex boundaries are more flexible and simpler to modify than the sutured ones.

\begin{rmk}
If $\La$ has boundary in $\dd V$, one can still define it's homology by choosing a well-behaved completion, see \cite{Dat22_Sutured_Legendrian_homology_2-braids}. However the invariance question is now slightly more subtle.
\end{rmk}

\paragraph{Product structure.} 
Consider an hypertight Legendrian $\La \subset (V, \xi)$, and assume there is no "bananas", ie there exist no holomorphic curves in $\RR\times V$, with boundary on $\RR \times \La$, and positively asymptotic to two Reeb chords (and no other asymptotes). 

Then its (linearized) homology $LH(\La, V)$ comes endowed with a product structure defined at the chain level by counting holomorphic curves in the symplectisation of $V$, with boundary in $\RR \times \La$, which are positively (resp. negatively) asymptotic to two (resp. one) Reeb chords :
\[ \mu(c, c') = \under{\sum}{c_- \in \C(\La)} \#\M(cc'; c_-) .c_-  \]

According to \cite{BEHWZ03_Compactness_SFT}, the moduli space $\M(cc'; c_-)$ has (virtal) dimension $|c| + |c'| - |c_-| - 1$, and admits a compactification by adding holomorphic buildings (where the curves in the symplectisation are quotiented by translation). More precisely, a $1$-dimensional moduli space admits a compactification of boundary
\begin{align*}
    \dd \M(cc'; c_-) = & \left( \under\bigcup{|c_0| = |c|-1}  \M(c, c_0) \times \M(c_0 c'; c_-) \right) \\
     & \cup \left( \under\bigcup{|c_0| = |c'|-1}  \M(c', c_0) \times \M(c c_0; c_-) \right) \\
     & \cup \left( \under\bigcup{|c_0| = |c_-|+1}  \M(cc'; c_0) \times \M(c_0; c_-) \right).
\end{align*}

Counting curves in the boundary of those moduli spaces (with $c$ and $c'$ fixed) yields the relation$0 =  \mu(\dd c, c') + \mu(c, \dd c') +\dd \mu(c,c')$ (although not necessary, we assume that the Legendrian complex has coefficients in $\ZZ_2$). For $c, c' \in \ker \dd$, we obtain
\begin{align*}
    \mu(c+ \dd a, c') & = \mu(c, c') + \mu(\dd a, c') = \mu(c, c') + \mu(a, \dd c') + \dd \mu(a, c') \\
     & = \mu(c, c') + \dd \mu(a, c')\qquad  \text{  because } \dd c' = 0,
\end{align*}
hence this product is well-defined (and associative) in homology. Moreover it is invariant along a Legendrian path : the moduli spaces of curves in a cobordism break in a similar way, and the chain map defined by the cobordism induces a morphism of (non-unital) rings map in homology, see \cite{Ekh06_Rational_SFT_Z2_cobord} for more details.

\begin{rmk}
Let us emphasize the necessity of being without bananas, the absence of those curves restricting the possible breakings. On the other hand, if the Legendrian is not hypertight, the product can still be defined by choosing an augmentation on the contractible chords (if it exists).
\end{rmk}

\subsection{Stopped point of view} \label{ssec.Stoped_POV}
In this part we assume that the sutured contact manifold is {\it balanced}, ie we have symplectomorphism $(R_pm, \l_{\rest R_\pm}) \simeq (W, \b)$. We give two other perspective inspired by the theory of stops, see \cite{Syl16_Stops} \cite{GPS17_Fuk_sectors} \cite{AsEk21_Chekanov-Eliashberg_dga_singular}.

\paragraph{Concave-convex procedure.}
Given a balanced sutured (or convex) contact manifold $V$, one can obtain a compact manifold $\tilde V$, with a "stop", by :
\begin{enumerate}
    \item gluing the contactisation $([-1,1]_z \times W, dz + \b)$ to the horizontal boundary of $V$ (note that $\{t=\pm 1\}$ is identified with $\{z = \mp 1\}$ using the above symplectomorphisms) ;
    \item filling the remaining holes by $D^2_{(r, \th)} \times \Ga$, endowed with a contact form  with expression $f(r) d\th + g(r) \l_\Ga$ for well-chosen functions. Most notably, we must have $f = 1$ and $g = e^{1-r}$ near $r=1$.
\end{enumerate}
This compact contact manifold comes with a Liouville hypersurface $\{z = 0\} \times W$, which will be seen as a {\it stop}. \\

The converse operation has been described in \cite{CGHH10_Sutures} : start with a contact manifold $(\tilde V, \tilde l)$ containing a Liouville hypersurface $W$, in other words the Reeb vector field is tranverse to $W$.
One can construct a sutured contact manifold $V_W$ as follows :
\begin{enumerate}
\item remove a standard neighbourhood $((-\e, \e)_z \times W, dz+\b) $ to obtain a contact manifold with corners. While it is not sutured\footnote{Such a manifold was called "concavely sutured" in \cite{CGHH10_Sutures}, although negatively sutured might be more appropriate, since a concave sutured boundary is also convex.} 
it can be made such ; 
\item modify the contact form such that two families of Reeb orbits appears, as depicted \cref{fig.ccv-cvx}, and remove a neighbourhood of $\{0\} \times \dd W$ (see \cite[§4.2]{CGHH10_Sutures} for more details) ;
\item finally smooth the extra corners, to obtain a sutured contact manifold with horizontal boundaries $R_\pm \simeq W$, and suture $\Ga = \dd W$.
\end{enumerate}

\begin{figure}[h!]
\center
\def\svgwidth{10cm} 
\import{./Dessins/}{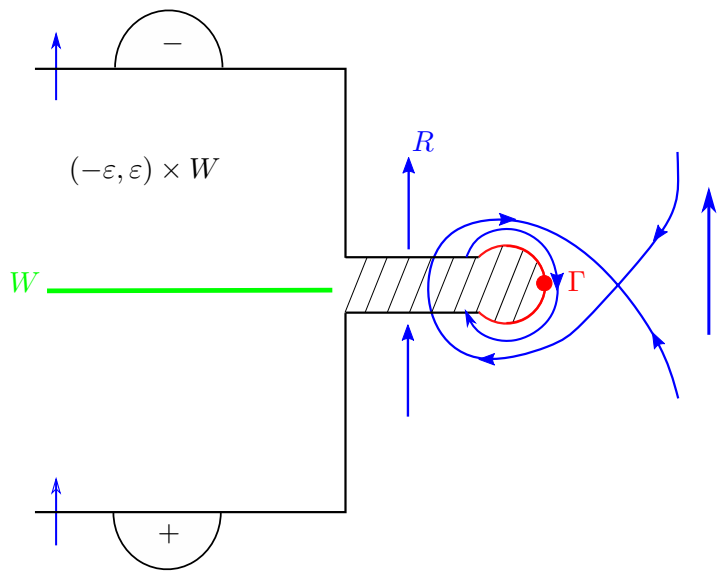}
\caption{Modification of the contact form on the complement of $(-\e, \e) \times W$. In blue is the Reeb vector field, and in red the vertical boundary of the resulting sutured manifold.}
\label{fig.ccv-cvx}
\end{figure}

In \cite{CGHH10_Sutures}, the main example was for $W$ a standard thickening of a Legendrian. However this construction is can also be extended to a isotropic submanifold $S$, by picking for $W$ a Weinstein domain of skeleton $S$. In that situation, we can consider the pair $(\tilde V, \{0\} \times S)$, where $S$ is a isotropic stop in $\tilde V$. The sutured contact manifold obtained by the concave-convexe procedure will be denoted $V_S$.

\begin{rmk} In \cite{AsEk21_Chekanov-Eliashberg_dga_singular}, singular Legendrians are used as stops, however one must assume that they come with Weinstein thickenings. \end{rmk}

\paragraph{Stopping via handles.}
We now describe the point of view adopted in \cite{Avd12_Connect_sum_cobord} \cite{AsEk21_Chekanov-Eliashberg_dga_singular}. Consider a Liouville manifold $X$, with a Liouville hypersurface $(W, \b) \subset V = \dd_+ X$. Then by gluing a Liouville handle 
\[\RR^2_{(x,y)} \times W, 2x dy +y dx + \b  \]
on a neighbourhood of $W$, one obtain a Liouville cobordism $X_W$ with boundaries $\dd_- \tilde X = \dd_- X \sqcup \RR \times W$ and $\dd_+ \tilde X = Y_W$.

Morevoer, there are Reeb orbits in the positive boundary of the handle (more precisely, they are sitting in $\{x=y=0\} \times \dd W$), which correspond exactly to the "transversly hyperbolic" Reeb orbits appearing in the concave-convex procedure. In other words, the positive boundary of the handle correspond to the part greyed out in \cref{fig.ccv-cvx}.

\paragraph{Stopped contact homology.}
We can also define a notion of {\it stopped contact homology}, which should be related to the constructions of \cite{GPS17_Fuk_sectors}.

Given a Liouville hypersurface $W$ in a contact manifold $V$, its contact homology stopped at $W$ is
\[CC(X, W) = CC(X_W).\]
If $W$ is a Weinstein thickening of a (potentially singular) isotropic submanifold $S$, we will write $CC(X, S)$. 

Moreover if $\La_0 \subset V$ is a Legendrian disjoint from $W$, its stopped Legendrian homology is defined by 
\[LC(\La;V, W) = LC(\La; V_W).\]
Similarly if $W$ is a Weinstein thickening of a isotropic submanifold $S$, we will write $LC(\La; X, S)$.

\section{The unit bundle of the complement of the knot} \label{sec.U(M-K)}

We state some generalities on the unit tangent bundle of a manifold $M$ with boundary, before examining the case at hand. Choose coordinates on a neighbourhood of the boundary 
\[\N(\dd M)  \simeq (-\e, 0]_u \times \dd M,\]
as well as a metric $g_\dd$ on $\dd M$, and set $g = du^2 + g_\dd$, extended arbitrarily to $M$.
Then the unit tangent bundle 
$ V = S_g TM = \{(x, v) \in TM, g_x(v) = 1\}$
comes endowed with a contact form 
$(\l_g)_{(x, v)} = g_x(v, dv)$, whose Reeb vector field lift the geodesic flow of $(M,g)$. In other words Reeb trajectories project to geodesic paths, and the action is the length of the underlying path.

Moreover the vector field $\dd_u$ preserve the metric, hence it lifts to a contact vector field on $S_gTM$, which is transverse to the boundary. Consequently $V$ has convex boundary, and we have coordinates 
 \[ U_g \N(\dd M) = \{ (u, y; \nu, w) \in (-\e, 0]_u \times \dd M \times \RR \times T_y \dd M |\ \nu^2 + g_\dd(w, w) = 1     \}.\]
in which the boundary $\dd V$ splits into 
$R_\pm  = \{u = 0, \pm \nu >0 \} \simeq D^* (\dd M)$ (as Liouville domains), glued along the dividing set $\Ga = \{u = 0, \nu = 0\} \simeq S_{g_\dd} T(\dd M)$ (as contact manifolds).

This construction has a counterpart in the unit cotangent bundle $ST^* M$, whose contact structure is canonically defined. Indeed, any vector field on $M$ lifts to a contact vector field on $ST^* M$, and if it is transverse to $\dd M$ its lift is transverse to $\dd V$. This situation is summarised in the following property

\begin{prop}
If $M$ is a manifold with boundary, then its unit cotangent bundle $ST^* M$ has convex boundary, and its dividing set is the unit bundle of the boundary. Moreover, for any compact submanifold $N \subset M$ with boundary in $\dd M$, intersecting transversely the boundary, its conormal 
is Legendrian, of boundary $ST^*_{\dd N} (\dd M)$.
\end{prop}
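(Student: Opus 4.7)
The plan is to handle the two claims separately: first I will construct a contact vector field on $ST^*M$ transverse to the boundary and compute its zero locus, then verify $\La_N$ is isotropic of maximal dimension and identify its boundary.

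Pick a collar $\N(\dd M) \simeq (-\e, 0]_u \times \dd M$ and extend $\dd_u$ to a vector field $X$ on $M$. As noted just above the proposition, any vector field on $M$ lifts to a contact vector field $\tilde X$ on $ST^*M$ via the cotangent lift of its flow (which preserves the canonical $1$-form $\alpha$ and is equivariant under rescaling of covectors, so descends to the spherisation); moreover $d\pi(\tilde X) = X$. Since $X$ is transverse to $\dd M$, the lift $\tilde X$ is transverse to $\dd(ST^*M) = ST^*M|_{\dd M}$, and \cref{lem.cvx_adapt} yields convexity. For the dividing set, at a boundary point $(x, [\eta])$ I compute $\alpha(\tilde X) = \eta(X_x) = \eta(\dd_u)$, which vanishes precisely for covectors tangent to $\dd M$; this identifies $\Ga$ with $ST^*(\dd M)$.

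For the Legendrian claim, the unit conormal $\La_N = ST^*_N M$ has dimension $k + (n-k) - 1 = n-1$, and $\alpha$ vanishes on $T\La_N$: a tangent vector $V$ projects either into $T_x N$ (where conormal covectors annihilate) or into the conormal fibre (where $d\pi V = 0$), so $\alpha(V) = \eta(d\pi V) = 0$ in either case. Transversality of $N$ with $\dd M$ makes $\La_N$ smooth up to $\dd \La_N = \La_N \cap \dd(ST^*M)$. To identify this boundary with $ST^*_{\dd N}(\dd M)$, consider the restriction map $\eta \mapsto \eta|_{T_x \dd M}$: since $T_x \dd N = T_x N \cap T_x \dd M$ by transversality, any $\eta$ annihilating $T_x N$ restricts to a covector on $\dd M$ annihilating $T_x \dd N$, i.e.\ to a conormal covector of $\dd N$ in $\dd M$. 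The kernel of this restriction at $x \in \dd N$ is $\RR \cdot du$, but $du \in \nu_x^* N$ would force $T_x N \subset T_x \dd M$, contradicting transversality, so the map is injective; a dimension count then gives a fibrewise linear isomorphism. After a $C^0$-small isotopy of $N$ arranging $\dd_u \in T_x N$ at each $x \in \dd N$ (which preserves the Legendrian isotopy class of $\La_N$), this restriction becomes the literal inclusion $ST^*(\dd M) \hookrightarrow \dd(ST^*M)$, realising $\dd \La_N$ as $ST^*_{\dd N}(\dd M) \subset \Ga$.

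The main subtlety is this last perpendicularity step: without arranging $N \perp \dd M$ near the boundary, $\dd \La_N$ is only abstractly diffeomorphic to $ST^*_{\dd N}(\dd M)$ via the restriction map and does not sit literally inside the dividing set. The perpendicularity isotopy is a routine collar adjustment but is the only point in the argument requiring more than a direct computation.
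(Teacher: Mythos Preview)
Your proof is correct and follows essentially the same approach as the paper: lift a vector field transverse to $\dd M$ to a contact vector field on $ST^*M$, read off the dividing set as the locus where the canonical form annihilates it, and identify $\dd\La_N$ with $ST^*_{\dd N}(\dd M)$ via restriction of covectors (the paper phrases your kernel argument as ``a linear form on $T_xM$ vanishing on $T_xN$ is equivalent to a linear form on $T_x(\dd M)$ vanishing on $T_x(\dd N)$''). The only minor difference is in arranging $\dd\La_N \subset \Ga$: you isotope $N$ to meet $\dd M$ orthogonally, whereas the paper (in the sentence immediately following its proof) instead chooses the transverse vector field $X$ to be tangent to $N$ --- these are equivalent collar adjustments.
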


The relevant submanifolds are:
\begin{align*}
    & V = ST^*M \qquad \dd V = D^* (\dd M) \under\cup\Ga D^* (\dd M) \qquad \Ga = ST^*(\dd M) \\
  & \qquad \qquad \qquad \La = ST^*_N M \qquad \dd \La = ST^*_{\dd N} \dd M.
\end{align*}

\begin{proof}
Indeed, if $X$ is a vector field on $M$, of flow $\phi^t$, the flow of the vector field lifted to $ST^*M$, denoted $\hat X = (X, 0)$, is 
\[ \hat \phi^t : (x, \a) \in ST^*M \mt (\phi(x), \phi_* \a).  \]
We claim that $\hat X$ preserve the contact structure: for $\a \in (T_x^* M \sms 0) / \RR_+^*$,
\begin{align*}
    d_{(x, \a)}\hat \phi^t(\xi_{(x, \a)}) &  = \{ (d_{(x, \a)}\hat \phi^t (v, w), v \in T_x M, w \in T_\a((T_x^* M \sms 0) / \RR_+^*), \a(v) =0 \} \\
    & = \{ (v', *) \in T_{(\phi^t(x), \phi^t_* \a)} (ST^*M), \a \circ (d_x \phi^t)^{-1}(v') = 0    \} \\
     & = \{ (v', *) \in T_{(\phi^t(x), \phi^t_* \a)} (ST^*M), \phi_* \a(v') = 0 \} \\
     &  = \xi_{(\phi^t(x), \phi_* \a)} = \xi_{\hat \phi^t(x, \a)}.
\end{align*}
If $X$ is transverse to $\dd M$, then $\hat X$ is transverse to the boundary of $ST^* M$, and the resulting dividing set is
\[ \Ga_{\hat X} = \{(x, \a), x \in \dd M, \a \in (T_x^* M \sms 0) / \RR_+^*, \a(X) = 0  \} \simeq ST^*(\dd M).  \]

Moreover, the boundary of the conormal $ST^*_N M \subset  (ST^* M, \xi_\st)$ is
\[\{ (x, \a) \in \dd M \times (T_x^* M \sms 0) / \RR_+^*, \a_{\rest TN} = 0   \}  \simeq ST^*_{\dd N} (\dd M). \] 
Indeed for $x \in \dd M$, a linear form on $T_x M$, vanishing on $T_x N$, is equivalent to a linear form on $T_x (\dd M)$, vanishing on $T_x(\dd N)$.
\end{proof}

Furthermore, if $X$ is a vector field on $M$ transverse to the boundary and tangent to $N$, the boundary $\dd \La$ is in the dividing set associated to $\hat X$. 

\begin{rmk}
To get a contact form adapted to the sutured contact manifold, we can take the one induced by the metric $f(u).(du^2 + g_\dd)$, where $g_\dd$ is a metric on $\dd M$ and $f$ is a strictly increasing function. Note that the metric $du^2+f(u).g_\dd$ also works (where $f$ is still increasing).
\end{rmk}
~\\

We now study the manifold $M_0 = S^3 \sms K$ obtained by removing a neighbourhood of a finite hyperbolic knot $K \subset M^3$, meaning that $M \sms K$ admits an hyperbolic metric of finite volume. In the rest of this section, we compute the Legendrian homology of a fiber $\La_0 \subset V_0 = ST^* M_0$.

\begin{thm} \label{thm.LH_U(M-K)=gp_ring} We have an isomorphism of (non-unital) rings
 $LH(\La_0, V_0) \simeq \ZZ_2[\bar \pi_1^*(M \sms K)]$, module generated by the $c_\g$ for $\g \in \pi_1(M \sms K) \sms \{1\}$, endowed with the product $(c_\g, c_{\g'}) \mt c_{\g'\g}$ (for $\g'\g\neq 1$).
\end{thm}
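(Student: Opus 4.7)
The plan is to exploit the fact that the hyperbolic metric of finite volume on $M_0=M\sms K$ makes the geometry of geodesics extremely rigid, so that Reeb chords, differentials, and products can all be read off from the universal cover.

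First I would invoke the colimit argument sketched in \cref{ssec.colim} to replace the contact form adapted to the sutured structure on $V_0=ST^*M_0$ by the contact form $\l_{g_{\text{hyp}}}$ induced by the complete hyperbolic metric, so that the Reeb flow is the geodesic flow on $M_0$. Since $x\notin K$ can be chosen generically and the thin parts (cusps) of $M_0$ cause no Reeb chord starting and ending at the fiber $\La_0=F^1_x$ to stay in a cusp (each chord has a minimum-length geodesic representative staying in the thick part after basing), I would identify Reeb chords of $\La_0$ with geodesic loops of $M_0$ based at $x$. Lifting to the universal cover $\mathbb H^3$, the preimages of $x$ are in bijection with $\pi_1(M_0,x)$, and uniqueness of geodesic segments between two points of $\mathbb H^3$ together with negative curvature gives exactly one non-degenerate Reeb chord $c_\g$ for each $\g\in\pi_1(M_0,x)\sms\{1\}$.

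Next I would compute the grading. A direct Conley--Zehnder calculation for the geodesic flow on a hyperbolic 3-manifold shows that the transverse linearised Reeb flow has no crossings of the fiber condition along $c_\g$, so every chord $c_\g$ lies in the same degree; placing it in degree $0$ is consistent with $H_*(\Omega_x M_0)$ being concentrated there (the complement being aspherical). Consequently the differential $\dd$, which lowers degree by one, has empty target on every generator, hence vanishes identically, and $LH(\La_0,V_0)$ is freely generated as a $\ZZ_2$-module by $\{c_\g\}_{\g\neq 1}$.

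For the product I would use Ekholm's Morse flow trees \cite{Ek05_Morse_flow_trees} to identify $\M(c_\g c_{\g'}; c_-)$ with rigid configurations of three geodesic arcs in $M_0$ meeting at a trivalent vertex, with the two positive asymptotes representing $\g,\g'$ and the negative asymptote representing some $\d$. Lifting such a tree to $\mathbb H^3$ and using the uniqueness of geodesic triangles with prescribed vertices, the only rigid configuration is the one in which the output geodesic realises the concatenation $\g'\g$ (the reversal of order encodes the convention on asymptotic markings for the two positive punctures on $\dd D_2$); when $\g'\g=1$ the putative output is the trivial loop, which is not a chord, and the count vanishes. This yields $\mu(c_\g,c_{\g'})=c_{\g'\g}$ for $\g'\g\neq 1$ and $0$ otherwise, proving the claimed ring isomorphism $LH(\La_0,V_0)\simeq\ZZ_2[\bar\pi_1^*(M\sms K)]$.

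The main obstacle will be the analytical bookkeeping underlying the flow-tree identification on a \emph{non-compact} hyperbolic manifold of finite volume: one must rule out flow trees escaping into the cusps and verify transversality for the rigid triangle moduli, which requires showing, via the colimit comparison of \cref{ssec.colim} and SFT compactness for holomorphic curves with asymptotics at the cusps, that no additional breakings or new contributions appear in the limit.
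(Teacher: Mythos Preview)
Your overall strategy matches the paper's: pass via the colimit of \cref{ssec.colim} to the hyperbolic contact form, identify Reeb chords of $\La_0$ with $\pi_1(M_0,x)\sms\{1\}$ via the universal cover, show the differential vanishes, and compute the product with Morse flow trees. Your degree argument for $\dd=0$ is correct for the strip (linearised) complex; the paper makes the same observation but also records an action argument---the unbroken geodesic minimises length in its homotopy class, so no curve with positive asymptote $c_\g$ and negative asymptotes $c_{\g_1}\dots c_{\g_k}$ with $\g_1\cdots\g_k=\g$ can exist---which has the advantage of killing the full dga differential as well.

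The genuine gap is in your product computation. Morse flow trees in the sense of \cite{Ek05_Morse_flow_trees} are \emph{not} ``configurations of three geodesic arcs in $M_0$ meeting at a trivalent vertex''; they are trees of gradient trajectories of function differences on the base of a $1$-jet space, and to invoke them you must first present the Legendrians as $1$-jet graphs. The fibers $\La_x\subset U\HH^3$ with the hyperbolic contact form are not a priori $1$-jets over any obvious base, and a geodesic $Y$-tree in $\HH^3$ with free trivalent vertex is not rigid (the vertex moves in a $3$-parameter family). The paper's fix, which you are missing, has two steps: first interpolate by a path of metrics from the hyperbolic to the \emph{euclidean} metric on $\RR^3$ (this gives an exact Lagrangian cobordism inducing a product-preserving isomorphism, since there is still a unique chord between any two fibers), and then use the explicit contactomorphism $(U_{g_\eu}\RR^3,\l_\eu)\simeq(J^1(S^2),\l_\st)$, under which the fiber over $x$ becomes the $1$-jet of $f_x(q)=x\cdot q$. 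Only now do flow trees apply, and they live on $S^2$: the unique rigid tree asymptotic to $(q_{xy},q_{yz};q_{xz})$ has its trivalent vertex at the maximum $q_{xz}$ of $f_z-f_x$ (so the negative branch is constant), with the two positive branches being the unique gradient lines of $f_y-f_x$ and $f_z-f_y$ from their maxima into $q_{xz}$. This is what yields $\mu(c_\g,c_{\g'})=c_{\g'\g}$; your ``geodesic triangle'' heuristic points to the right answer but is not an argument.
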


If the starting manifold is $M = S^3$, we get a complete knot invariant (up to miror) since we can recover the knot group by standard results, see also \cite{ENS16_Complete_knot_invrt} \cite{She16_Conormal_torus_knot_inrt}.
\begin{cor} \label{cor.complete_invrt}
For $M = S^3$, if $LH(\La_0, V_0(K)) \simeq LH(\La_0, V_0(K'))$ as (non-unital) rings, then $K \sim K'$ up to mirror.
\end{cor}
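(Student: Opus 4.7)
The plan is to chain three well-known rigidity results: first recover the fundamental group $\pi_1(S^3 \sms K)$ from the ring, then recover the homeomorphism type of the complement via Mostow rigidity, then recover the knot via Gordon--Luecke.

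First I would apply \cref{thm.LH_U(M-K)=gp_ring} to turn a ring isomorphism $LH(\La_0, V_0(K)) \simeq LH(\La_0, V_0(K'))$ into an isomorphism of non-unital $\ZZ_2$-algebras $\ZZ_2[\pi_1(S^3 \sms K)\sms\{1\}] \simeq \ZZ_2[\pi_1(S^3 \sms K')\sms\{1\}]$ (equipped with the opposite multiplication; this opposition is harmless because any group $G$ is isomorphic to $G^{\text{op}}$ via inversion). Adjoining a unit produces a ring isomorphism of the full group rings $\ZZ_2[\pi_1(S^3 \sms K)] \simeq \ZZ_2[\pi_1(S^3 \sms K')]$.

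Second, I would promote this ring isomorphism to an isomorphism of the underlying groups. The distinguished basis $\{c_\g\}_{\g \neq 1}$ is singled out in the ring by the fact that multiplication by each $c_\g$ permutes the basis (sending $c_{\g'}$ to $c_{\g'\g}$, with the ``diagonal'' products $c_\g c_{\g^{-1}}$ being the only ones that drop into the unit component). Identifying such a basis intrinsically is essentially the isomorphism problem for group rings, and following the strategy of \cite{ENS16_Complete_knot_invrt}, one uses that the knot groups of hyperbolic knots are torsion-free, residually finite, and linear (embedding in $PSL(2,\CC)$), which rules out the pathologies that obstruct the problem in general. I expect this to be the main technical obstacle; all the subsequent geometric content follows from standard rigidity theorems.

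Third, once an abstract isomorphism $\pi_1(S^3 \sms K) \simeq \pi_1(S^3 \sms K')$ is in hand, Mostow rigidity for finite-volume hyperbolic $3$-manifolds promotes it to an isometry (hence a homeomorphism) between the complements. Finally, the Gordon--Luecke theorem states that a knot in $S^3$ is determined by its complement, with the possibility of a mirror appearing precisely when the homeomorphism between complements is orientation-reversing. Combining these steps yields $K \sim K'$ up to mirror, as claimed.
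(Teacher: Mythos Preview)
Your overall chain---apply \cref{thm.LH_U(M-K)=gp_ring}, adjoin a unit to pass to the full group ring, then group ring $\Rightarrow$ group, then Mostow, then Gordon--Luecke---is exactly the paper's route. The gap is at the second step: you correctly flag it as the main technical obstacle, but you do not actually resolve it. The properties you invoke (torsion-free, residually finite, linear in $PSL(2,\CC)$) are not known to settle the group-ring isomorphism problem, and the pointer to \cite{ENS16_Complete_knot_invrt} does not help here either, since that paper proceeds by also extracting the peripheral subgroup---precisely the step you (rightly) note is unnecessary once Mostow is available.

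The missing ingredient in the paper's proof is that knot groups are \emph{left-orderable} \cite{Rol14_Order_group}. One then invokes a theorem of LaGrange--Rhemtulla \cite{LaRh68_Ring_group_left_order}: for left-orderable $G$, a ring isomorphism $\ZZ_2[G]\simeq\ZZ_2[G']$ forces $G\simeq G'$. The mechanism is the one you were groping toward when you spoke of the basis being ``singled out'': in the group ring of a left-orderable group the only invertible elements are the monomials $c_\g$, so the group is recovered intrinsically as the group of units. With this one input in place of your second paragraph, your argument becomes the paper's verbatim.
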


\begin{proof}
The assumed isomorphism extend to a (unital) ring isomorphism between the usual group rings $\ZZ_2[\pi_1(S^3\sms K)]$, obtained by adding an unit element $c_1$ with the obvious product extension. Moreover the group ring $\ZZ_2[\pi_1(S^3 \sms K)]$ is a complete invariant of the knot :
the groups $G_K = \pi_1(S^3 \sms K)$ being left orderable (\cite[Corollary 6.0.4]{Rol14_Order_group}), a ring isomorphism $\ZZ_2[G_K] \simeq \ZZ_2[G_{K'}]$ comes from a group isomorphism (\cite{LaRh68_Ring_group_left_order}, the idea being that the only invertible elements are monomials), hence the linearized homology, with its product structure, determines $G_K$ ;
furthermore, by Mostow rigidity theorem (recall that the knots are of finite volume), the group $G_K$ determines $S^3\sms K$, and by \cite{GoLu89_Knots_complements} two knots of $S^3$ are equivalent (up to miror) if and only if their complements are homeomorphic.
\end{proof}

\begin{rmk} To extend this result to any knot of $S^3$, one would need to recover the group  $\pi_1(M \sms K)$ as well as its peripherical subgroup, as in \cite{ENS16_Complete_knot_invrt} and \cite{She16_Conormal_torus_knot_inrt}.
\end{rmk}

\begin{rmk}
The following colimit argument also applies to the contact manifold $US^3 \sms \La_K$, hence it directly imply \cref{cor.complete_invrt}. However the statement \cref{MainThm.complete_invariant} is a lot more general. 
\end{rmk}

The rest of this section is dedicated to the proof of theorem \cref{thm.LH_U(M-K)=gp_ring}, using the following strategy : we first construct a sequence of contact forms on $V_0$ "stretching" toward the contact form induced by the hyperbolic metric on $M \sms K$. Using a colimit, we are able to lift the Legendrian to the unit bundle of $\HH^3$. There is no curve contributing to the differential by action reason, and we explicit the curves counted by the product using the Morse flow trees from Ekholm \cite{Ek05_Morse_flow_trees}.

\subsection{Contact forms} \label{ssec.forms}

Lets fix coordinates on the neighbourhood of $M_0$. By Margulis lemma (see eg \cite[Théorème D.3.3]{BenedPetro12_Hyperbolic}), there is a neighbourhood $\N(K)$ of the knot such that $\N(K) \sms K \simeq \RR^+_u \times \TT^2_z$, and on which the hyperbolic metric is $g_h = \frac{du^2 + g_{\TT^2}}{(C_0 + u)^2}$ where $C_0 > 0$ and  $g_{\TT^2}$ is the flat metric on the torus.

The manifold $V = U_{g_h}(\{u \leq 1\})$, endowed with the contact form $\l_h$ induced by the hyperbolic metric, has convex boundary. On a neighbourhood of the boundary we have coordinates  
\[ \{(u, z; \nu, \eta) \in [0,1] \times \TT^2 \times \RR \times \CC, \frac{\nu^2 + \eta^2}{C_0+u^2} = 1 \} \subset V \]
in which $\l_h = \frac{-1}{(C_0+u)^2} (\nu du + \eta \cdot dz)$. 
By the previous section the Legendrian homology of a fiber $\La_x \subset (V, \l_h)$ is well-defined. Note that a contact form adapted to the dividing set $\Ga = \{u=1, \nu=0\}$ has a dynamic similar to the form induced by an hyperbolic metric with a trumpet at the boundary, see \cref{fig.colim_metriq}.

We now construct  the family a contact forms $\l_T$ for $T> 0$, such that
\begin{itemize}
    \item for $u \leq  \frac46$, $\l_T$ comes from the hyperbolic  metric on $u \leq T$ : for some $\hat T \in [\frac T 6, \frac T 2]$, 
    \[(\{u \leq \frac46\}, \l_T) \simeq (\{u \leq \hat T\}, \l_h);\]
    \item for $\frac56 \leq u \leq 1$, $\l_T$ is independent of $T$.
\end{itemize}
Pick a smooth function $\vphi_T : [-\e,1] \ra [-\e, \ity)$ increasing, as well as $F_T : [-\e,1] \ra (0,\ity)$, such that 
\begin{itemize}
\item $\vphi_T(u) = u$ on $[-\e, 0]$
\item $\vphi_T' = T$ on $[\frac16, \frac26]$  
\item $\vphi_T' = 1$ on $[\frac36, 1]$,
\item $F_T = (C_0+\vphi_T(u))(C_0+u)$ for $u \leq \frac46$ 
\item $F_T = (C_0+u)^2$ for $\frac 56 < u <1$,
\end{itemize}
see figure \cref{fig.fct_etire} for an illustration. In particular $\hat T = \vphi(\frac46) \in [\frac T 6, \frac T 2]$. Set 
\[ \l_T = - \frac{1}{F_T} (\vphi_T' \nu du + \eta \cdot dz). \]

On $\{u \leq \frac 46\}$, $\l_T$ is the pullback of $\l_h$ under the lift of the application 
\[\phi_T : \{u \leq 1\} \ra \{u \leq \phi_T(\frac 4 6)\}\]
to the unit hyperbolic bundle, while on $\{\frac 56 \leq u \leq 1\}$ it is only proportional to that pullback.

\begin{figure}[h!]
\center
\def\svgwidth{16cm} 
\import{./Dessins/}{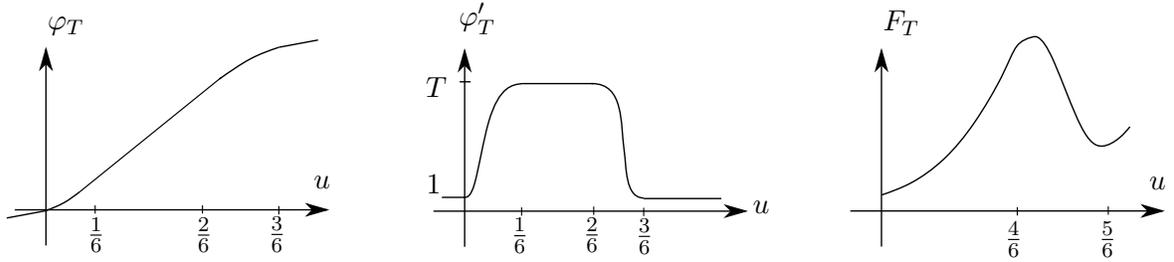}
\caption{Functions used to stretch the contact form near the boundary.}
\label{fig.fct_etire}
\end{figure}

\begin{figure}[h!]
\center
\def\svgwidth{13cm} 
\import{./Dessins/}{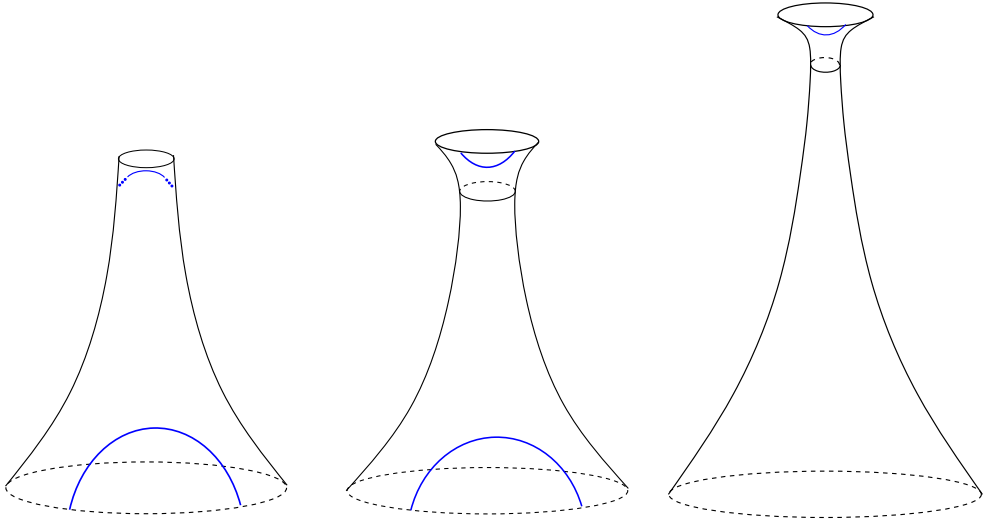}
\caption{The metric corresponding to the contact forms $\l_h$, $\tilde{\l}_h$ and $\tilde{\l}_T$ (where the last two forms are adapted to the suture), with geodesics almost tangent to the boundary (in blue). }
\label{fig.colim_metriq}
\end{figure}

\subsection{Colimit} \label{ssec.colim}		

We now show that it is enough to consider chords lifting geodesics in $(M \sms K, g_h)$, which will gives us the desired homology : 
\begin{enumerate}
    \item the geodesic loops based in $x$ are in bijection with the non-trivial elements of $\pi_1(M\sms K, x)$, and the corresponding chords have all degree one (see the next section) ;
    \item a pseudo-holomorphic curve positively asymptotic to the chord (corresponding to) $\g \in \pi_1(M \sms K)$ must be negatively asymptotic to chords $\g_1...\g_k$ such that $\g_1 \cdot ... \cdot \g_k = \g$ in $\pi_1(M\sms K)$. This is impossible for action reasons : the $\l_h$-action of a chord is the $g_h$-length of the underlying geodesic, which is minimized by the unbroken geodesic corresponding to $\g$, see figure \cref{fig.geod_H2}.
\end{enumerate}

\begin{rmk} The contact forms $\l_T$ have no contractible Reeb orbit (even after making it a sutured contact manifold), hence we can discard them. Also note that there is no contratible chord either, hence the strip Legendrian homology is well defined, and the second step above isn't required.  
\end{rmk}

For $\g \in \pi_1(M \sms K)$, $c_\g$ will denote the geodesic loop in $(M\sms K, g_h)$ based in $x$, as well as the lift as a the Reeb chord. The geodesic lifted to $\HH^3$, as well as the lift of the Reeb chord to $U H^3$, will be denoted $\hat c_\g$. 

\begin{lem} \label{lem.length_geods} There exists $C > 0$ such that for any $A >0$ and  $T \geq T(A) = C e^A$, the $\l_T$-chords of action bounded by $A$ correspond to geodesic loops of $(M\sms K, g_h)$ of length bounded by $A$. 
\end{lem}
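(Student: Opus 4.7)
The plan is to exploit the identification, on $\{u \leq 4/6\}$, between $(V, \l_T)$ and the domain $\{u \leq \hat T\} \subset U_{g_h}(M\sms K)$ equipped with $\l_h$. By construction, this contactomorphism $\Phi_T$ preserves the base point $x$ and the fiber $\La_x$ (since $\vphi_T = \Id$ on $[-\e,0]$ and $x$ lies in the interior of $M_0$), and it sends $\l_T$-chords to $\l_h$-chords preserving the action, as both forms satisfy $\l(R) = 1$ so action equals elapsed Reeb time. Moreover the $\l_h$-chords at $\La_x$ contained in $\{u \leq \hat T\}$ are exactly the lifts of $g_h$-geodesic loops at $x$ living in that region, with action equal to $g_h$-length. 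The lemma thus reduces to the two escape statements: for $T \geq Ce^A$, every $\l_T$-chord of action $\leq A$ stays in $\{u \leq 4/6\}$, and every $g_h$-geodesic loop at $x$ of length $\leq A$ stays in $\{u \leq \hat T\}$.

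The geometric input is the standard cusp estimate: in cusp coordinates with metric $\tfrac{du^2 + g_{\TT^2}}{(C_0+u)^2}$, the $g_h$-distance from the horosphere $\{u=0\}$ to $\{u = u_0\}$ equals $\log\bigl(\tfrac{C_0+u_0}{C_0}\bigr)$. Setting $d_0 = d_{g_h}(x, \{u=0\})$, which is finite since $x$ is fixed in the interior, this yields
\[ d_{g_h}\bigl(x, \{u = u_0\}\bigr) \;\geq\; \log\tfrac{C_0 + u_0}{C_0} - d_0, \]
and any $g_h$-geodesic loop at $x$ reaching $\{u = u_0\}$ has length at least twice this quantity, which exceeds $A$ as soon as $u_0 > C_0(e^{A/2+d_0}-1)$. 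This gives the escape statement for geodesic loops once $\hat T$ satisfies that inequality.

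For the $\l_T$-chord side, suppose $c : [0, a] \to V$ is a $\l_T$-chord at $\La_x$ of action $a \leq A$ that exits $\{u \leq 4/6\}$, and let $t_1$ (resp.\ $t_2$) be its first exit (resp.\ last re-entry) time. On $[0,t_1]$ and $[t_2, a]$ the Reeb trajectory stays in $\{u \leq 4/6\}$, so via $\Phi_T$ it projects to a $g_h$-geodesic arc from $x$ to the horosphere $\{u = \hat T\}$, of respective lengths $t_1$ and $a - t_2$. The cusp estimate gives $t_1, a - t_2 \geq \log\tfrac{C_0+\hat T}{C_0} - d_0$, hence
\[ a \;\geq\; 2\bigl(\log\tfrac{C_0+\hat T}{C_0} - d_0\bigr), \]
which also exceeds $A$ under the same condition on $\hat T$. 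Combining both parts with $\hat T \geq T/6$, it suffices to choose $C = 6C_0 e^{d_0}$: then $T \geq Ce^A$ forces $\hat T > C_0(e^{A/2+d_0}-1)$ (using $e^A \geq e^{A/2}$), and both escape bounds hold.

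The main observation making this work smoothly is that one never needs to analyse the Reeb dynamics in the transition region $\{4/6 \leq u \leq 5/6\}$ where $\l_T$ is not a pullback of $\l_h$: an escaping chord must do a full round trip, and the portions already lying inside the identified region $\{u \leq 4/6\}$ contribute on their own enough action to exceed $A$. All the work is therefore absorbed into a single exponential horoball distance from hyperbolic geometry, which is the only genuine input.
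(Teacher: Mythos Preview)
Your proof is correct and follows essentially the same approach as the paper's: both use the identification of $(\{u\le 4/6\},\l_T)$ with $(\{u\le \hat T\},\l_h)$ together with the logarithmic horoball distance estimate in the cusp. The paper's argument is extremely terse (two sentences), whereas you spell out the round-trip escape argument and an explicit constant; note that your $-d_0$ correction is in fact unnecessary, since any path from $x$ to $\{u=u_0\}$ must cross $\{u=0\}$ and the cusp portion alone already has length $\ge \log\tfrac{C_0+u_0}{C_0}$, which is exactly the bound the paper states.
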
 

\begin{proof}
Firstly, a Reeb trajectory of $(V_0, \l_T)$ staying in $\{u \leq \frac46\}$ correspond to a geodesic of $(M \sms K, g_h)$. Moreover the length of a $g_h$-geodesic joining $x$ to the hypersurface $\{u=u_0\}$ is greater than $\ln(1+\frac{u_0}{C_0})$, hence by taking $T(A)$ such that $A = d_{g_h}(x, \{u = \frac{T(A)}6\})$ we obtain the desired result.
\end{proof}

Hence as discussed above, for $T \geq T(A)$ the bounded Legendrian complex is
\[LC^A(\La_x, V_0, \l_T) = \ZZ[\bar\pi_1^{*,A}(M\sms K)],\]
with trivial differential. Here $\l_T$ is modified near the boundary into a contact form adapted to the dividing set, independently from $T$ and such that it doesn't create any contractible Reeb orbit.   

\begin{figure}[h!]
\center
\def\svgwidth{9cm} 
\begingroup%
  \makeatletter%
  \providecommand\color[2][]{%
    \errmessage{(Inkscape) Color is used for the text in Inkscape, but the package 'color.sty' is not loaded}%
    \renewcommand\color[2][]{}%
  }%
  \providecommand\transparent[1]{%
    \errmessage{(Inkscape) Transparency is used (non-zero) for the text in Inkscape, but the package 'transparent.sty' is not loaded}%
    \renewcommand\transparent[1]{}%
  }%
  \providecommand\rotatebox[2]{#2}%
  \newcommand*\fsize{\dimexpr\f@size pt\relax}%
  \newcommand*\lineheight[1]{\fontsize{\fsize}{#1\fsize}\selectfont}%
  \ifx\svgwidth\undefined%
    \setlength{\unitlength}{352.74450347bp}%
    \ifx\svgscale\undefined%
      \relax%
    \else%
      \setlength{\unitlength}{\unitlength * \real{\svgscale}}%
    \fi%
  \else%
    \setlength{\unitlength}{\svgwidth}%
  \fi%
  \global\let\svgwidth\undefined%
  \global\let\svgscale\undefined%
  \makeatother%
  \begin{picture}(1,0.51533516)%
    \lineheight{1}%
    \setlength\tabcolsep{0pt}%
    \put(0,0){\includegraphics[width=\unitlength,page=1]{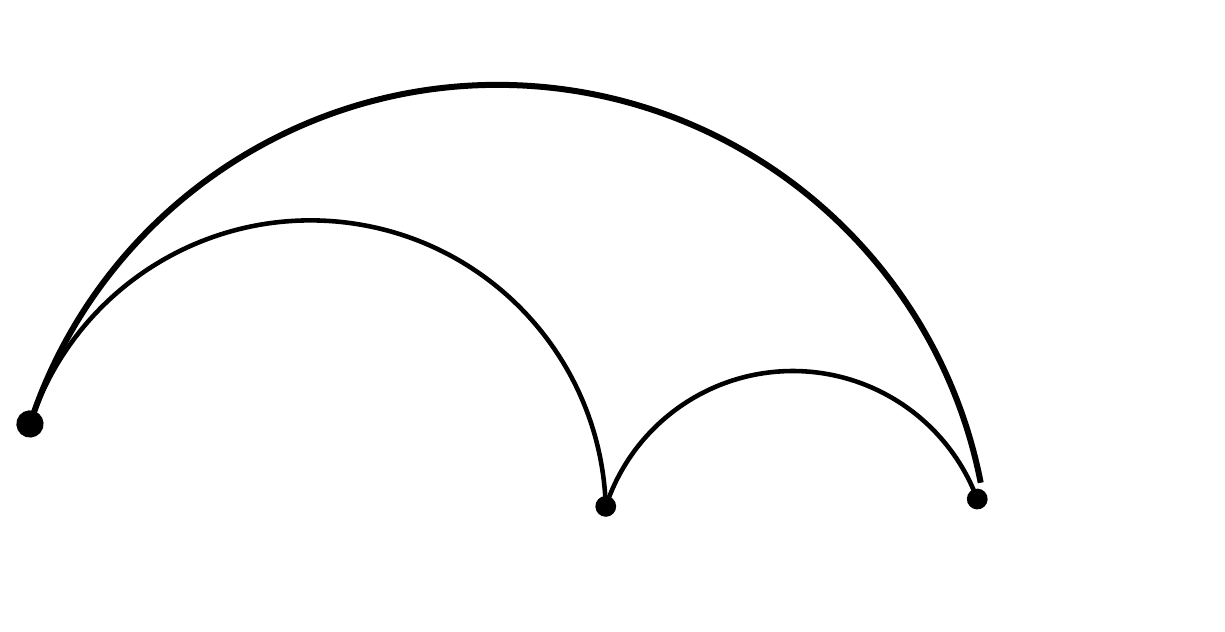}}%
    \put(-0.00423576,0.02870953){\color[rgb]{0,0,0}\makebox(0,0)[lt]{\lineheight{1.25}\smash{\begin{tabular}[t]{l}$z$\end{tabular}}}}%
    \put(0.41647735,0.01061437){\color[rgb]{0,0,0}\makebox(0,0)[lt]{\lineheight{1.25}\smash{\begin{tabular}[t]{l}$\g z$\end{tabular}}}}%
    \put(0.78290488,0.04228091){\color[rgb]{0,0,0}\makebox(0,0)[lt]{\lineheight{1.25}\smash{\begin{tabular}[t]{l}$\g'\g z$\end{tabular}}}}%
    \put(0.21743032,0.25942314){\color[rgb]{0,0,0}\makebox(0,0)[lt]{\lineheight{1.25}\smash{\begin{tabular}[t]{l}$c_\g$\end{tabular}}}}%
    \put(0.51600089,0.2232328){\color[rgb]{0,0,0}\makebox(0,0)[lt]{\lineheight{1.25}\smash{\begin{tabular}[t]{l}$c_{\g'}$\end{tabular}}}}%
    \put(0.38933461,0.47656555){\color[rgb]{0,0,0}\makebox(0,0)[lt]{\lineheight{1.25}\smash{\begin{tabular}[t]{l}$c_{\g \g'}$\end{tabular}}}}%
  \end{picture}%
\endgroup%

\caption{Geodesics in the hyperbolic half-space}
\label{fig.geod_H2}
\end{figure}

We now apply the following lemma to ensure that the colimit recovers the linearized homology of $(\La_x, V_0, \l_h)$. Note that the reference provided is stated for contact homology of compact contact manifolds, and still holds here since the contact forms $\l_T$ are fixed near the boundary (and the Legendrian is fixed).

\begin{lem}\cite[Lemma 3.1.7 \& Theorem 3.2.1]{CGH10_ECH_&_OBD} \label{lem.ColimFormes} 
Let  $(V, \l)$ be a contact maniflod containing a Legendrian $\La$, $T_n$ a sequence of reals, and $A_n : V \ra (0, \ity)$ a sequence of smooth and positive functions. Then 
\begin{enumerate}
\item If $\frac{A_{n+1}}{f_{n+1}} > \frac{A_n}{f_n}$, the maps \[LC^{\leq A_n}(\La, V, f_n \l) \lra LC^{\leq A_{n+1}}(\La, V, f_{n+1} \l)\] are well-defined using exact Lagrangian cobordisms, and compatibles.
\item if $Q_n = \frac{A_n}{f_n} \under{\lra}{n \ra \ity} \infty$, we have $\under{\colim}{n \ra \ity} LC^{\leq A_n}(\La, V, f_n \l) \simeq LC(\La, V, \l)$.
\end{enumerate}
\end{lem}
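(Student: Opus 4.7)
The plan is to exhibit the chain maps via exact Liouville cobordisms and control the actions with Stokes' theorem. For each $n$ I would take $\RR_s \times V$ endowed with the Liouville form $h_n(s)\l$, where $h_n : \RR \ra (0, \ity)$ is a smooth monotone function interpolating between $e^s f_n$ for $s \ll 0$ and $e^{s-C_n} f_{n+1}$ for $s \gg 0$; the shift $C_n$ is chosen large enough that $h_n' > 0$ throughout. This is a legitimate exact symplectic cobordism from $(V, f_n \l)$ to $(V, f_{n+1}\l)$ in which $\RR \times \La$ is an exact Lagrangian since $\l$ vanishes on $\La$. Counting rigid $J$-holomorphic curves with boundary on $\RR \times \La$ produces a continuation map between the unfiltered Legendrian complexes, and the standard neck-stretching argument shows that the composition of two successive such cobordisms is chain-homotopic to the direct cobordism, giving the compatibility of the inductive system.

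The central step is the action estimate. For a holomorphic curve $u$ in the cobordism, positively asymptotic to a chord $c^+$ at the top and negatively asymptotic to chords $c_1^-, \dots, c_k^-$ at the bottom, with boundary on $\RR \times \La$, Stokes' theorem together with the positivity of $u^* d(h_n\l)$ yields
\[\sum_i \int_{c_i^-} f_n \l \ \leq\ \int_{c^+} f_{n+1}\l.\]
Equivalently, the $\l$-action of $c^+$ controls a suitable weighted sum of the $\l$-actions of the $c_i^-$, with the weight given by the ratio $f_n/f_{n+1}$. The hypothesis $A_{n+1}/f_{n+1} \geq A_n/f_n$ is then exactly what is needed for the chain map to descend to the filtered subcomplexes and give the arrows of the inductive system.

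For the colimit identification, every Reeb chord of $\l$ has finite $\l$-action, so the filtration of $LC(\La, V, \l)$ by $\l$-action is exhausting. General SFT invariance (without action cutoff) identifies $LC(\La, V, f_n \l) \simeq LC(\La, V, \l)$ for each $n$, and under these quasi-isomorphisms the filtered pieces $LC^{\leq A_n}(\La, V, f_n \l)$ correspond to subcomplexes of $LC(\La, V, \l)$ whose action cutoff is comparable to $A_n / f_n$. As $A_n/f_n \ra \ity$ this family is cofinal, so passing to the colimit recovers the full complex. The main technical obstacle, which I expect is where the real difficulty lies rather than in the algebraic formalism, is ensuring Gromov compactness for the holomorphic curves in the non-compact cobordism $\RR_s \times V$ when $V$ itself is completed as an open (sutured) contact manifold; this is precisely what is carried out in \cite[Lemma 3.1.7 \& Theorem 3.2.1]{CGH10_ECH_&_OBD}.
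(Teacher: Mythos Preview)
The paper does not prove this lemma; it is quoted directly from \cite[Lemma 3.1.7 \& Theorem 3.2.1]{CGH10_ECH_&_OBD}, so there is no argument in the paper to compare against. Your sketch has the right overall shape---build exact cobordisms, use Stokes for the action bound, pass to the colimit by cofinality---but the cobordism is set up with the wrong orientation, and as a result the role of the hypothesis $A_{n+1}/f_{n+1} > A_n/f_n$ never actually enters.

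In Legendrian contact homology the cobordism map runs from the positive end to the negative end, since the curves being counted have one positive puncture and several negative ones. Your interpolation places $f_n\l$ at $s\ll 0$ and $f_{n+1}\l$ at $s\gg 0$, so the induced chain map goes $LC(\La, V, f_{n+1}\l) \to LC(\La, V, f_n\l)$, the reverse of what is required; your Stokes inequality $\sum_i \int_{c_i^-} f_n\l \leq \int_{c^+} f_{n+1}\l$ then bounds outputs in the $f_n$-complex by an input in the $f_{n+1}$-complex, which does not yield the stated filtered map. The clean fix is to rescale before building the cobordism: set $g_n = f_n/A_n$, so that the hypothesis reads simply $g_n > g_{n+1}$ pointwise. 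The region between the graphs of $g_{n+1}$ and $g_n$ in the symplectisation of $(V,\l)$ is then a genuine Liouville cobordism with positive end $(V, g_n\l)$ and negative end $(V, g_{n+1}\l)$, no auxiliary shift $C_n$ needed. Stokes now shows that the cobordism map carries $LC^{\leq 1}(g_n\l)$ into $LC^{\leq 1}(g_{n+1}\l)$, and since $LC^{\leq 1}(g_n\l) = LC^{\leq A_n}(f_n\l)$ this is exactly the desired filtered arrow. With this correction your arguments for compatibility and for the colimit identification go through as written.
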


\paragraph{The colimit argument.}
In the remaining of this section, we prove that the last lemma ca be applied to the sequence $LC^A(\La_x, V_0, \l_{T(A)})$, which imply that $LH(\La_0, V_0)$ is the desired module. We use Gray's theorem in an explicit way to obtain the necessary estimations.
 \begin{lem} \label{lem.bound_mu_T}
 There is diffeomorphism $\psi_T : V_0 \ra V_0$ such that, for all $T \geq 1$,
 \[ (V_0, \l_T) \over{\psi_T}\simeq (V_0, \mu_T \l_1). \]
Moreover we have the following estimation
\begin{equation} \label{eqn.estim_mu_T}
    \frac CT \leq \mu_T \leq C, 
\end{equation}
where $C$ is independent from $T$, and
\begin{equation} \label{eqn.diverg_Q_A}
    Q_A = \inf_{V_0} A/\mu_{T(A)} \under{\lra}{A \ra \ity} +\ity.
\end{equation}
\end{lem}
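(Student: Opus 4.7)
The plan is to apply Gray's stability theorem (Moser's trick) to the family of contact forms interpolating between $\l_1$ and $\l_T$, and then to read off the conformal factor $\mu_T$ explicitly from the formulas for $\l_T$.

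First I would define a smooth one-parameter family $\{\l_s\}_{s \in [1,T]}$ by letting $\varphi_s$ and $F_s$ depend smoothly and monotonically on $s$, interpolating between the $s=1$ and $s=T$ data of \cref{ssec.forms}; each $\l_s$ is again a contact form of the same type. I would then run Moser's trick: find a time-dependent vector field $X_s$ (normalized by $\i_{X_s}\l_s = 0$) and a scalar function $c_s$ such that $\mathcal{L}_{X_s} \l_s + \dot \l_s = c_s \l_s$, the pair $(X_s, c_s)$ being uniquely determined by pairing this equation with the Reeb vector field $R_{\l_s}$ and with vectors in $\ker \l_s$. Integrating the flow of $X_s$ and exponentiating $c_s$ along it produces the pair $(\psi_T, \mu_T)$ realising the claimed identification.

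The next step is to estimate $\mu_T$ pointwise. On $\{u \geq 5/6\}$ the forms $\l_T$ and $\l_1$ agree by construction, so we may take $\psi_T$ to be the identity there and $\mu_T \equiv 1$. On $\{u \leq 4/6\}$, $\l_T$ is the lift of a pullback of $\l_h$ under the explicit rescaling $\phi_T$ of the base. Reading off the formula $\l_T = -F_T^{-1}(\varphi_T' \nu\, du + \eta \cdot dz)$ and comparing with the $T=1$ case, the conformal ratio is governed by $\varphi_T' / \varphi_1'$ and $F_1 / F_T$. Since $F_T \geq F_1$ and $\varphi_T' \geq 1$ pointwise, one obtains an upper bound $\mu_T \leq C$ independent of $T$; in the stretched band $u \in [1/6, 2/6]$, where $\varphi_T' = T$ but $F_T$ only grows linearly in $T$, one obtains the matching lower bound $\mu_T \geq C/T$.

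Finally, since $T(A) = C e^A$ by \cref{lem.length_geods}, the uniform upper bound $\mu_T \leq C$ gives
\[Q_A \;=\; \inf_{V_0} \frac{A}{\mu_{T(A)}} \;\geq\; \frac{A}{C} \;\underset{A \to \infty}{\lra}\; +\infty, \]
which is the required divergence. The main obstacle is tracking $\mu_T$ cleanly through Moser's trick, since $X_s$ is only implicitly defined; in practice I would bypass this by constructing $\psi_T$ directly as the contact lift of the explicit rescaling $\phi_T$ already used to define $\l_T$, and then read off $\mu_T$ from the comparison of $\l_T$ with $\psi_T^* \l_1$, which yields both estimates without the need to integrate $X_s$ abstractly.
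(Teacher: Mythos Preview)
Your overall architecture (Moser's trick for the family $\l_s$, then estimate the conformal factor) matches the paper, and your final divergence argument $Q_A \geq A/C \to \infty$ is correct and in fact cleaner than what is written there. The gap is in the middle step, where you bound $\mu_T$.

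The forms $\l_T$ and $\l_1$ are \emph{not} conformally related: the kernel of $-F_T^{-1}(\varphi_T'\,\nu\,du + \eta\cdot dz)$ genuinely depends on $T$ through $\varphi_T'$, so there is no scalar ``conformal ratio'' to read off from the formulas. The function $\mu_T$ only exists after you have pulled back by $\psi_T$, and $\psi_T$ moves points nontrivially in the fibre. Your heuristic ``governed by $\varphi_T'/\varphi_1'$ and $F_1/F_T$'' is therefore not a bound on $\mu_T$ but on two separate quantities that do not assemble into one without knowing $\psi_T$. Your proposed bypass --- taking $\psi_T$ to be the contact lift of the base rescaling $\phi_T$ --- does not work either: $\phi_T$ sends $\{u \leq 1\}$ to $\{u \leq \hat T\}$ with $\hat T \to \infty$, so its lift is not a self-diffeomorphism of $V_0$ and cannot play the role of $\psi_T$.

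What the paper actually does to close this gap is instructive. It runs Moser on the rescaled family $\bar\l_T = F_T\l_T = -(\varphi_T'\,\nu\,du + \eta\cdot dz)$, then proves the key fact that the Moser vector field $X_T$ satisfies $du(X_T)=0$, so the flow $\psi_T$ \emph{preserves the $u$-coordinate}. This is what makes the estimates tractable: one gets $\mu_T = \bar\mu_T / (\psi_T^* F_T) = \bar\mu_T / F_T$, and the Moser equation $\dd_T(\ln\bar\mu_T) = \nu^2\,\dd_T\varphi_T' / ((C+u)^2\varphi_T')$ can be integrated with $u$ held fixed to give $1 \leq \bar\mu_T \leq \varphi_T'$. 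The bound $\mu_T \leq C$ then reduces to the inequality $\varphi_T'/(1+\varphi_T) \leq C$, which is not automatic: the paper has to construct $\varphi_T$ carefully (an explicit cubic, \cref{fact.technical_function}) to ensure this holds uniformly on $[0,1/6]$. You should expect to need both the $u$-preservation and this explicit choice of $\varphi_T$; neither is visible from your sketch.
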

\cref{eqn.estim_mu_T} implies that, for a sequence $A_n$ increasing fast enough, the first condition of \cref{lem.ColimFormes} is satisfied, since it yields
\[ \frac A C \leq \frac A {\mu_{T(A)}}  \leq CAe^A. \]
Hence the cobordism maps are well defined, and by \cref{eqn.diverg_Q_A} the second condition of \cref{lem.ColimFormes} is satisfied as well.

\begin{proof} 
We will estimate the diffeomorphisms $\psi_T$ and the functions $\mu_T$ by expliciting the Moser's trick. \\

{\it { Step 1} : Moser's trick} \\
We apply the Moser's trick to the path $F_T \l_T = - \vphi'_T \nu du - \eta dz, T \geq 1$, to obtain an isotopy $\psi_T$ such that  
\[\psi_T^*(\vphi'_T \nu du + \eta dz) = \bar \mu_T( \nu du + \eta dz)\]
where $\psi_T$ is the flow of a vector field $X_T$ and $\bar \mu_T$ is a positive function, given by
\begin{align*}\label{eqn.Hyperb.Moser}
    \dd_T (\ln \bar \mu_T) & = \dd_T  \bar \l_T (R_{\bar \l_T}) \circ \psi_T \\
     (\i_{X_T} d\bar \l_T)_{\rest \xi_T} & = \left( \dd_T \bar \l_T(R_{\bar \l_T}).\l_T - \dd_T \bar \l_T \right)_{\rest \xi_T}.
\end{align*}
Notice that the contact form is invariant for $\frac56 < u \leq 1$, hence $\psi_T = \Id$ and $\bar\mu_T = 1$ on that region. The function $\mu_T$ takes the expression $\mu_T = \frac{\bar \mu_T}{\psi_T^* F_T}$ (which is also $1$ for $\frac56 < u \leq 1$). \\ 

{\it {Step 2} : Computing $X_T$} \\
\begin{fact}
The Reeb vector field of $- \vphi'_T \nu du - \eta dz$ is 
$R_{\bar \l_T} = - B (\nu \dd_u + \vphi_T' \eta.\dd_z + 2  \dd_\nu)$,
where $B = \frac{1}{\vphi_T' (C+u)^2} \geq 0$.
\end{fact}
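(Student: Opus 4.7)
The Fact is a direct computational verification: the Reeb vector field is characterized on the unit bundle $\Si = \{\nu^2 + |\eta|^2 = (C_0+u)^2\}$ by the three conditions that $R$ be tangent to $\Si$, that $\bar\l_T(R) = 1$, and that $\iota_R d\bar\l_T = 0$ on $T\Si$. My plan is simply to check these for the proposed formula.

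The first step would be to compute $d\bar\l_T$ explicitly. Since $\vphi_T'$ depends only on $u$, the term $d\vphi_T' \wedge (\nu\, du)$ vanishes, leaving
\[ d\bar\l_T = \vphi_T'\, du \wedge d\nu + dz \wedge d\eta. \]
The normalization $\bar\l_T(R) = 1$ is then immediate: substituting the proposed $R$ yields $B\,\vphi_T'(\nu^2 + |\eta|^2)$, and the unit-bundle constraint together with the formula for $B$ give exactly $1$.

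The more delicate step is the vanishing of $\iota_R d\bar\l_T$ on $T\Si$, together with the tangency $df(R) = 0$ for $f = \nu^2 + |\eta|^2 - (C_0+u)^2$. In ambient coordinates $\iota_R d\bar\l_T$ need not vanish identically; one checks instead that it lies in the conormal line to $\Si$, i.e.\ that it is proportional to $df$. This is the step where the specific coefficients in the formula (the prefactor $B$ and the coefficient of $\dd_\nu$) enter, and they must be calibrated precisely so that the contributions in $du$, $d\nu$ and $d\eta$ assemble into a multiple of $df$; this balancing, together with the parallel tangency check $df(R)=0$, is essentially the only real obstacle, everything else being routine. Once verified, uniqueness of the Reeb vector field on the contact manifold $(\Si, \bar\l_T|_\Si)$ concludes the proof, and in particular the explicit expression for $R_{\bar\l_T}$ can be plugged directly into Moser's equations for $\bar\mu_T$ and $X_T$ in the next step.
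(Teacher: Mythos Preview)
Your approach is correct and essentially identical to the paper's: compute $d\bar\l_T = \vphi_T'\,du\wedge d\nu + dz\wedge d\eta$, verify tangency to the constraint hypersurface, show that $\iota_R d\bar\l_T$ is a multiple of the constraint differential (hence vanishes on $T\Sigma$), and finally fix $B$ by the normalisation $\bar\l_T(R)=1$. The paper carries out exactly these steps in the same order, with the only cosmetic difference that it phrases the key point as ``the contraction equals $\vphi_T'(\nu\,d\nu+\eta\,d\eta-2\,du)$, which vanishes on $V_0$'' rather than ``lies in the conormal line''.
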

\begin{proof}
Indeed $d(F_T\l_T) = \vphi_T' du \we d\nu + dz \we d\eta$, and $V_0$ is defined by the equation $\frac{\nu^2 + \eta^2}{(C+u)^2}=1$ hence \begin{align*}
0 & = d((\nu^2 + \eta^2)/(C+u)^2) \\
& = (C+u)^{-2}(\nu d\nu + \eta d\eta) - 2 (C+u)^{-4} (\nu^2 + \eta^2) du \\
 & = (C+u)^{-2} \left( \nu d\nu + \eta d\eta - 2 du \right)
 \end{align*}
We can easily check that the vector field $\nu \dd_u + \vphi_T' \eta.\dd_z + 2 \dd_\nu$ is tangent to $V_0$,
and that it directs the Reeb vector field  : 
\begin{align*}
    \i_{\nu \dd_u + \vphi_T' \eta.\dd_z + 2 \dd_\nu} d(F_T \l_T) & = \nu \vphi'_T d\nu  - \vphi'_T \eta.d\eta - 2 \vphi'_T  du \\
    & = \vphi'_T (\nu d\nu + \eta d\eta - 2 du) = 0.
\end{align*}
Finally the normalisation condition determines $B$. 
\end{proof}

\begin{fact}
We have $du (X_T) = 0$, hence $\psi_T$ preserve the $u$ coordinate.  
\end{fact}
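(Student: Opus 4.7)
The plan is to solve the Moser equation in the ambient coordinates $(u,z,\nu,\eta)$ and read off the $\dd_u$-component of $X_T$. The key input is that the $T$-dependence of $\bar\l_T = -\vphi'_T\nu\,du - \eta\cdot dz$ sits entirely in the $du$-coefficient, so
\[ \dd_T\bar\l_T = -(\dd_T\vphi'_T)\,\nu\,du \]
contains no $d\nu$-, $d\eta$- or $dz$-component; $\bar\l_T$ itself has no $d\nu$- or $d\eta$-term either. Hence the right-hand side of the Moser equation $(\iota_{X_T}d\bar\l_T)_{\rest \xi_T} = (f\bar\l_T - \dd_T\bar\l_T)_{\rest \xi_T}$ has vanishing $d\nu$- and $d\eta$-coefficients.

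Writing $X_T = a\,\dd_u + b\cdot\dd_z + c\,\dd_\nu + d\cdot\dd_\eta$ and using the already recorded formula $d\bar\l_T = \vphi'_T\,du\we d\nu + dz\we d\eta$, one expands
\[ \iota_{X_T}d\bar\l_T = -\vphi'_T c\,du + \vphi'_T a\,d\nu - d\cdot dz + b\cdot d\eta. \]
The Moser equation only has to hold on $TV_0 = \ker d(\nu^2+\eta^2-(C_0+u)^2)$, so in the ambient space the identity is true only modulo a function multiple $k\,(\nu\,d\nu+\eta\,d\eta-(C_0+u)\,du)$. Matching $d\nu$- and $d\eta$-coefficients yields $\vphi'_T a = 2k\nu$ and $b = 2k\eta$, and substituting these into the constraint $X_T \in \xi_T$, i.e.\ $\bar\l_T(X_T) = -\vphi'_T\nu a - \eta b = 0$, forces $k(\nu^2+\eta^2) = 0$. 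Since $\nu^2+\eta^2 = (C_0+u)^2 > 0$ on $V_0$, we conclude $k = 0$, whence $\vphi'_T a = 0$ and therefore $a = du(X_T) = 0$.

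Since $X_T$ has no $\dd_u$-component, its flow $\psi_T$ preserves every level set $\{u = \text{const}\}$, as asserted. The only mild subtlety is handling the ambient-versus-intrinsic formulation of the Moser equation via the multiplier $k$; once this is accounted for the argument is purely algebraic. As a side benefit, the same matching also gives $b = 0$, which simplifies the subsequent explicit computation of $\bar\mu_T$ and of the estimate \cref{eqn.estim_mu_T}.
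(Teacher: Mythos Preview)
Your argument is correct. The key observation --- that both $\bar\l_T$ and $\dd_T\bar\l_T$ are supported on $du$ and $dz$, so that matching the $d\nu$- and $d\eta$-coefficients of the Moser equation (modulo the defining differential of $V_0$) forces a relation which, together with $\bar\l_T(X_T)=0$, kills the multiplier $k$ --- is clean and complete. One small point you leave implicit: the Moser equation is stated restricted to $\xi_T$, but it extends automatically to all of $TV_0$ because both sides annihilate $R_{\bar\l_T}$ (the left side since $\iota_{R_{\bar\l_T}}d\bar\l_T=0$, the right side by the very definition of $f$). Once that is noted, your ``modulo $k\cdot(\nu\,d\nu+\eta\cdot d\eta-(C_0+u)\,du)$'' formulation is fully justified.

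The paper takes a different route: it writes down an explicit frame $V_i,W_i$ for $\xi_T$, with the $V_i$ lying in the $\dd_\nu,\dd_\eta$-directions and the $W_i$ carrying the $\dd_u,\dd_z$-parts, and then argues that the Moser vector field is a combination of the $V_i$ alone. Your approach avoids producing this basis and is arguably more robust, since it never requires identifying $X_T$ explicitly --- you only extract the one coefficient you need. The paper's approach, on the other hand, gives a concrete expression for $X_T$ (useful if one later wants finer control of the flow). Your side remark that $b=dz(X_T)=0$ also falls out of the paper's description, and is consistent with $X_T$ sitting in the span of the $V_i$; as you note, it is not actually needed for the estimate on $\bar\mu_T$, since the bound in Step~3 only uses that $u$ is preserved.
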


\begin{proof}
Indeed the contact hyperplan $\ker \l_T$ is the intersection of two kernels :
\[ \xi_T = \ker (2du - \nu d\nu - \eta\cdot d\eta) \cap \ker (\vphi_T' \nu du + \eta \cdot dz),  \]
and is generated by the vectors
\begin{align*}
    & V_i = \eta_i \dd_\nu - \nu \dd_{\eta_i} \\
    & W_i = \eta_i \dd_u - \vphi_T' \nu \dd_{z_i} + 2 \dd_{\eta_i},
\end{align*}
where $i \in \{1, 2\}$, and $z=(z_1, z_2) \in \RR^2, \eta = (\eta_1, \eta_2) \in \RR^2$. We can now check that 
\[\nu .\i_{V_1 - V_2} d\bar \l_T = (\eta_1-\eta_2)\eta\cdot dz,\] hence $X_T$ is directed by $V_1 - V_2$ according to the Moser equation. 
\end{proof}

Hence Moser's trick's equations can be written
\begin{align*} 
    \dd_T (\ln \bar \mu_T) \circ \psi_T^{-1} & = \dd_T \bar \l_T (R_{\bar \l_T})  = - \dd_T \vphi_T'. \nu du (R_T) \\
     & = B \nu^2 \dd_T \vphi_T' = \frac{\nu^2 \dd_T \vphi'_T}{(C+u)^2 \vphi'_T } \\
  (\i_{X_T} d\bar \l_T)_{\rest \xi_T}&  = - \left(\frac{\nu}{C+u}\right)^2 \frac{\dd_T \vphi_T'}{\vphi_T'} \eta \cdot dz_{\rest \xi_T}. 
\end{align*}

{\it {Step 3} : Bounding $\bar \mu_T$} \\
Using the Moser equations, 
$\dd_T (\ln \bar \mu_T) \geq 0$ hence $\bar \mu_T \geq 1$ (recall that $\bar \mu_1 = 1$). Moreover we have
\begin{align*}
    \dd_T (\ln \bar \mu_T) & = \dd_T  \bar \l_T (R_{\bar \l_T}) \circ \psi_T \\
    & \leq \frac{ \dd_T \vphi'_T}{(C+u)^2 \vphi'_T },
\end{align*}
hence
\begin{align*}
    \ln \bar \mu_T  & \leq  \frac{1}{(C+u)^2} \int_1^T \frac{\dd_T \vphi'_T}{\vphi'_T } \quad \text{ using }\ \bar \mu_1 = 1  \\
     & \leq \ln \vphi_T' - \ln \vphi_1' = \ln \vphi_T' \quad {\text using }\ \vphi_1' = 1,
\end{align*}
which implies $1 \leq \bar \mu_T \leq \vphi_T'$. \\~\\

{\it {Step 4} : Bounding $\mu_T$} \\
We now show that $\mu_T = \frac{\bar \mu_T}{\psi_T^* F_T} = \frac{\bar \mu_T}{F_T} $ is bounded as desired. For that purpose we need to choose $\vphi_T$ on $[0 ,\frac16]$ satisfying some conditions.

\begin{fact} \label{fact.technical_function}
    For any $T \geq 1$, there exists $\vphi_T : [0, \frac16] \ra \RR$ strictly increasing and such that
    \begin{itemize}
        \item $\vphi_T(0) = 0$, $\vphi_T'(0) = 1$ and $\vphi_T'(\frac16) = T$;
        \item $\frac{1 + \vphi_T}{\vphi_T'}> \e >0$, where $\e$ is independent of $T$ and $u$;
        \item $\vphi_T(\frac16) \geq \e' T$, where $\e'$ is independent of $T$.
    \end{itemize}
\end{fact}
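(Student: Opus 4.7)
The plan is to construct $\vphi_T$ by an explicit ODE ansatz, controlling its derivative through a cutoff. First I would fix, once and for all, a smooth non-decreasing function $\chi : [0, 1/6] \to [0, 1]$ that equals $0$ on a neighbourhood of $0$ and $1$ on a neighbourhood of $1/6$, independent of $T$. I would then take $\vphi_T$ to be the solution of the initial value problem
\[ \vphi_T'(u) \,=\, \bigl(1 - \chi(u)\bigr) + \chi(u)\cdot c_T \bigl(1 + \vphi_T(u)\bigr), \qquad \vphi_T(0) = 0, \]
where $c_T > 0$ is defined implicitly by the endpoint condition $\vphi_T'(1/6) = T$, which using $\chi(1/6) = 1$ reads $c_T(1 + \vphi_T(1/6)) = T$. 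The first bullet is then automatic: $\vphi_T(0) = 0$ from the initial condition, $\vphi_T'(0) = 1$ from $\chi \equiv 0$ near $0$, and $\vphi_T'(1/6) = T$ from the definition of $c_T$.

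For the second bullet, I would check directly that on $\{\chi = 0\}$ the ratio $(1 + \vphi_T)/\vphi_T'$ equals $1 + u \geq 1$, on $\{\chi = 1\}$ it equals $1/c_T$, and on the transition region the estimate $\vphi_T' \leq (1 + c_T)(1 + \vphi_T)$ gives $(1 + \vphi_T)/\vphi_T' \geq 1/(1 + c_T)$. A uniform bound $c_T \leq C$ thus yields the second bullet with $\e = 1/(1 + C)$. The third bullet is equivalent to the same uniform bound, since $T = c_T(1 + \vphi_T(1/6))$ rearranges as $\vphi_T(1/6) \geq T/C - 1$, which is at least $T/(2C)$ for $T$ large.

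The hard part will be verifying the uniform bound on $c_T$. This reduces to a fixed-point estimate for the implicit equation relating $c_T$, the profile of $\chi$, and $T$. The key idea is to choose the support of $\chi$ close enough to $1/6$ so that the exponential growth of $\vphi_T$ on $\{\chi = 1\}$ produces the required linear-in-$T$ value at the endpoint. Once a suitable profile of $\chi$ is fixed, the constants $\e$ and $\e'$ depend only on $\chi$, and the three bullets follow by elementary ODE estimates on each of the three pieces $\{\chi = 0\}$, $\{\chi = 1\}$, and the transition region.
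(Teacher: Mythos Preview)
Your ODE ansatz has a genuine internal contradiction: with $\chi$ fixed independently of $T$, the uniform bound $c_T \leq C$ you need for the second and third bullets is impossible.

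Indeed, suppose $c_T \leq C$ for all $T \geq 1$. On the whole interval $[0,1/6]$ your equation gives $\vphi_T' \leq 1 + C(1+\vphi_T)$, so by Gronwall $1+\vphi_T(1/6)$ is bounded by a constant depending only on $C$ and $\chi$, say $1+\vphi_T(1/6) \leq M$. But the endpoint condition reads $T = c_T\bigl(1+\vphi_T(1/6)\bigr) \leq C M$, which is absurd for $T > CM$. Hence $c_T \to \infty$ as $T \to \infty$. Now on $\{\chi = 1\}$ the ratio is exactly $(1+\vphi_T)/\vphi_T' = 1/c_T \to 0$, so the second bullet fails; and since $\vphi_T(1/6) = T/c_T - 1$, the third bullet $\vphi_T(1/6) \geq \e' T$ would force $c_T \leq 1/\e'$, which we just ruled out. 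The heuristic ``choose the support of $\chi$ close to $1/6$'' does not help: once $\chi$ is fixed, the above Gronwall bound applies regardless of where the support sits.

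The underlying obstruction is that any scheme in which $\vphi_T'$ is a $T$-independent multiple of $1+\vphi_T$ on part of the interval cannot simultaneously make $\vphi_T(1/6)$ of order $T$ and keep $(1+\vphi_T)/\vphi_T'$ bounded below. By contrast, the paper writes down an explicit cubic polynomial (after rescaling to $[0,1]$),
\[
f(x) = \tfrac13\bigl(1 + 2T + 3T(x-1) - (T-1)(x-1)^3\bigr),
\]
for which both $f$ and $f'$ are \emph{affine in $T$} at each fixed $x$, so their ratio stays of order $1$ uniformly. One then checks directly $f(0)=0$, $f'(0)=1$, $f'(1)=T$, $f(1) = (2T+1)/3 \geq \tfrac23 T$, and $(1+f)/f' \geq 1/9$. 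A fix along these lines---letting the coefficients, rather than an exponential rate, carry the $T$-dependence---is what your construction is missing.
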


We delay the proof of this statement, and first prove \cref{eqn.estim_mu_T}, using the previous estimation $1 \leq \bar \mu_T \leq \vphi'_T$, as well as the properties of $\vphi$ described \cref{ssec.forms}. 

Remember that $\mu_T = \frac{\bar \mu_T}{F_T}$, and that for $0 \leq u \leq \frac46$ we have 
\[F_T = (C_0+ \vphi_T(u))(C_0+u).\]
Since we chose $C_0 \geq 1$ (and $T \geq 1$), we get 
\[1+\vphi_T \leq F_T \leq C T,\]
hence 
\[ \mu_T = \frac{\bar \mu_T}{F_T} \geq \frac{\bar \mu_T}{CT}  \geq \frac1{CT} \]
which prove the left side of \cref{eqn.estim_mu_T} for $u \in [0, \frac46]$.
On the other hand, we also have, still for $0 \leq u \leq \frac46$ :
\[ \mu_T = \frac{\bar \mu_T}{F_T} \leq \frac{\bar \mu_T}{1+\vphi_T} \leq \frac{\vphi'_T}{1+\vphi_T} \]
We now argue for each interval :
\begin{enumerate}
    \item For $0 \leq u \leq \frac16$, $\frac{\vphi'_T}{1+\vphi_T}$ is bounded by \cref{fact.technical_function}.

    \item For $\frac16 \leq u \leq \frac26$, $\vphi_T$ is affine, with $\vphi_T' = T$, and by \cref{fact.technical_function}
    \[\vphi_T \geq \vphi_T(\frac16) \geq \e'T, \] 
    hence $\mu_T \leq \frac T{1+ \e'T} \leq \frac1{\e'}$.  

    \item For $\frac26 \leq u \leq  \frac36$, $\vphi_T \geq \frac T6$ and $\vphi'_T \leq T$, hence 
    \[\mu_T \leq \frac{\vphi'_T}{1+\vphi_T} \leq \frac{T}{1+\frac T6} \leq \frac 16 .\] 

    \item For $\frac 36 \leq u \leq 1$, we have $\phi_T' = 1$ so the contact structure is unchanged : $\l_T = \frac{F_1}{F_T}\l_1$.
    Hence $\psi_T$ is the identity, and $\mu_T = \frac{F_1}{F_T}$. Moreover we can chose $F_T$ such that 
    \[ C \leq F_T \leq C T,\]
    which conclude the proof of \cref{eqn.estim_mu_T}.
\end{enumerate}
~\\

\begin{proof}
We now construct a function $\varphi_T$ as wished for \cref{fact.technical_function}. For ease of read, we will actually define $f : [0,1] \ra \RR$. We set
\[ f(x) = \frac13\big(1+ 2T + 3 T (x-1) - (T-1) (x-1)^3\big) \]
Then 
\begin{align*}
    & f(1)   = \frac{2T+1}3 \geq \frac23 T \\
    & f' = T - (T-1)(x-1)^2 \geq 1.
\end{align*}
We can check that $f(0) = 0$, $f'(0) = 1$ and $f'(1) = T$. Finaly, we compute
\begin{align*}
    3 \frac{1+f}{f'} & = \frac{4-T+3Tx-(T-1)(x-1)^3}{(x-1)^2 + T(2x-x^2)  }\\
     & = \frac{4- 3x^2+x^3 + \frac1T (4+(1-x)^3)  }{2x(1-x) + \frac1T (x-1)^2} \\
    & \geq \frac{4- 3x^2+x^3  }{2x(1-x) + 1} \quad \text{ since } \frac{(x-1)^2}{T} \leq 1 \\
    & \geq  \frac{4-3}{2+1} = \frac13 \quad \text{ since } x(1-x) \leq 1,
\end{align*}
\end{proof}

{\it {\bf Step 5}: Divergence of $Q_A$}\\ 
Applying \cref{lem.length_geods} and using the previous estimations, we get \cref{eqn.diverg_Q_A} :
\[Q(A) = \frac A {\mu_{T(A)}} \geq CAT(A) \geq CAe^A\]

which conclude the proof of \cref{lem.bound_mu_T}.
\end{proof}

We can now apply \cref{lem.ColimFormes} to the contact forms $\tilde \l_T$, where the continuation morphisms are inclusions $\ZZ[\pi_1^A(M\sms K)] \subset \ZZ[\pi_1^{A'}(M\sms K)]$, which conclude the proof of the first part of \cref{thm.LH_U(M-K)=gp_ring}. \\

\begin{rmk}
Similarly, the non-linearized contact homology is the algebra freely generated by the $c_\g$. A priori the colimit of dgas is not well-defined (although its homotopy colimit is), however in this case the continuation morphisms are simply inclusions of dga. \\
Regarding the contact homology the computation is more difficult : we now have one Reeb orbit by free homotopy class, so for the cylindrical contact homology the differential trivially vanishes. However if we allow for several negative asymptotics, the previous action argument doesn't hold. 
\end{rmk}

\begin{rmk} \label{rmk.BKO}
Another way to define the contact homology of the unit bundle of $M \sms K$ was described in \cite{BKO19_Formal_boundary_hyperb_knot_compl}. Surprisingly (at least for the author), one can actually control holomorphic curves when using the contact form induced by the hyperbolic metric. Let us summarise their construction from the sutured point of view. \\
Outside of a compact, $M\sms K \simeq [1,\infty)_u \times \TT^2$, with the metric $(du^2+g_0)/u^2$, where $g_0$ is the flat metric on $\TT^2$, and the induced contact form is "concave". Indeed if we consider the unit bundle of $\{u\leq C\}$ (which is a manifold with boundary), it has convex boundary, and near the dividing set the Reeb vector field goes from the positive to the negative region (as depicted on the left side of \cref{fig.colim_metriq}). \\
However, by lifting $M\sms K$ to the standard hyperbolic space $\HH^3 = (0,\infty)_z \times \RR^2$, the metric becomes $(dz^2+g_0)/z^2$ (note that $u=\infty$ correspond to $z=0$). And now the induced contact form is convex : if we consider the unit bundle of $\{z\leq C\}$, the Reeb vector field is going from the negative to the positive region, so it is adapted to the sutured contact manifold. \\
This is not a contradiction, and the reason of that discrepancy is that $z$-levels and $u$-levels don't coincide. It also illustrates that, when working with non-compact sutured manifolds, it is crucial to choose coordinates at infinity.
Moreover, the previous colimit argument shows that the two definitions coincide (for Legendrian homology) : one can use either the sutured setting, or the contact form induced by the hyperbolic metric (in which case the almost complex structure has to be the Sasakian one, associated to the metric).
\end{rmk}

\subsection{Product structure.} \label{ssec.prod}

According to the previous section, to compute the product on $LH(\La_x, \tilde V, \tilde \l_h)$ it is enough to determines curves for the contact form induced by the hyperbolic metric $g_h$ on $M\sms K$. Lifting the point $x$ to the hyperbolic universal cover $\HH^3$, we obtain a lattice $Q \subset \HH^3$ of conormal \[\La_Q = \{(x, a), x \in Q, a \in U^*_x \HH^3\} \subset U \HH^3\] here endowed with the contact form coming from the hyperbolic metric. 
Then for any pair of points $x, y \in Q$, there is an unique hyperbolic geodesic $c_{xy}$ between them, lifting to a Reeb chord in the unit bundle $U \HH^3$, with ends in $\La_Q$.

\begin{lem}\label{lem.hyperb.produit}
For any distinct $x, y, z \in Q$, there exists a unique holomorphic curve in the moduli space $\M(c_{xy}c_{yz}; c_{xz})$.
\end{lem}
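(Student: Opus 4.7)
The plan is to apply Ekholm's Morse flow tree correspondence \cite{Ek05_Morse_flow_trees} to translate the counting of holomorphic curves in the symplectization of $U\HH^3$ into the counting of geodesic configurations in the base $\HH^3$. This should work cleanly here because the base is simply connected and of non-positive curvature, so that geodesic combinatorics become trivial.

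First, I would set up the flow tree correspondence for the fiber conormal Legendrians $\La_x, \La_y, \La_z \subset U\HH^3$. Since each $\La_p$ sits over a single point $p \in \HH^3$ (it is the full unit cotangent sphere $U^*_p \HH^3$), the boundary arcs of any holomorphic disk in $\RR \times U\HH^3$ on $\RR \times (\La_x \cup \La_y \cup \La_z)$ project to the three points $x,y,z$, while each corner of the disk, which is asymptotic to a Reeb chord $c_{pq}$, projects to the corresponding geodesic segment in $\HH^3$ (this is where the Sasaki complex structure coming from the hyperbolic metric enters, and matches the almost complex structures of \cref{rmk.BKO}). Therefore a holomorphic curve in $\M(c_{xy}c_{yz};c_{xz})$ projects to a geodesic triangle in $\HH^3$ with vertices $x,y,z$ and sides the three geodesics $c_{xy}$, $c_{yz}$, $c_{xz}$.

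For existence and uniqueness, since $\HH^3$ is a complete, simply connected manifold of constant negative curvature, any three distinct points span a unique geodesic triangle; the three geodesic sides are exactly the lifts to $U\HH^3$ of the Reeb chords $c_{xy}$, $c_{yz}$, $c_{xz}$. So at the level of flow trees, there is exactly one candidate, and the flow tree theorem lifts it to a unique rigid holomorphic curve.

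The main obstacle is to ensure that Ekholm's flow tree correspondence applies in this slightly non-standard setting: the Legendrians $\La_p$ are fiber conormals rather than $1$-jet sections of exact Lagrangians, and the base is hyperbolic rather than flat. I would approach this by working locally around the geodesic triangle (whose image in $\HH^3$ is compact) and using the exponential map at a basepoint to identify a neighbourhood of the triangle with a standard cotangent chart; the Legendrians become standard fibers in that chart. Rigidity/transversality of the geodesic triangle in $\HH^3$ follows from strict negative curvature (so that nearby perturbations break the matching at the trivalent vertex at first order), and this should transfer via the flow tree theorem to the desired transverse rigidity of the holomorphic curve in the symplectization. Since all counts are taken with $\ZZ_2$-coefficients, no sign computation is required.
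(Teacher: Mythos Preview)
Your overall instinct to reduce the count to Ekholm's Morse flow trees is exactly the paper's route, but there is a genuine gap in how you set it up. Ekholm's correspondence in \cite{Ek05_Morse_flow_trees} is formulated for Legendrians in a $1$-jet space $J^1(B)$ that arise as $1$-jet lifts of functions (or more generally, whose Lagrangian projections are multi-sections of $T^*B$); the flow trees then live in the base $B$. The fiber conormals $\La_p = U^*_p\HH^3$ are the opposite extreme: their Lagrangian projections to $T^*\HH^3$ are the \emph{cotangent fibers}, not sections. Passing to an exponential chart does not help, since in any cotangent chart a fiber remains a fiber. Consequently your proposed ``flow trees in $\HH^3$'' are not flow trees in Ekholm's sense, and the uniqueness of the hyperbolic geodesic triangle, while suggestive, does not by itself control the count of holomorphic disks (the projection from the moduli space to the triangle is not a priori a bijection).

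The paper fixes exactly this point by swapping base and fiber. First it uses invariance of the product under a path of metrics to replace the hyperbolic form on $U\HH^3$ by the euclidean form on $U_{g_\eu}\RR^3$ (this step also disposes of any curvature-dependent transversality issues). Then it invokes the explicit contactomorphism
\[
(U_{g_\eu}\RR^3,\l_\eu)\;\simeq\;(J^1(S^2),\l_\st),\qquad (t,q,p)\mapsto (tq+p,\,q),
\]
under which the cotangent fiber $U_x\RR^3$ becomes the $1$-jet of the function $f_x(q)=x\cdot q$ on $S^2$. Now the Legendrians \emph{are} $1$-jet sections, Ekholm's theorem applies verbatim, and the flow trees live in $S^2$ (not in $\HH^3$ or $\RR^3$). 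The count then reduces to a concrete Morse-theoretic statement on $S^2$: each $f_y-f_x$ has a unique positive critical point $q_{xy}=\frac{y-x}{|y-x|}$, and since $q_{xz}$ is the maximum of $f_z-f_x$, the negative branch of any rigid tree with asymptotics $(q_{xy},q_{yz};q_{xz})$ is constant, forcing a unique tree. This is the missing mechanism that turns your heuristic ``unique geodesic triangle'' into an actual curve count.
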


This result conludes the proof of \cref{thm.LH_U(M-K)=gp_ring} : for any $\g, \g' \in \pi_1(M\sms K)$, such that $\g' \neq \g^{-1}$, and $T$ big enough, there is a unique holomorphic pair of pants in the symplectisation of $(\tilde V, \l_T)$, with boundary in $\RR \times \La_x$, and positively asymptotic to $c_\g$ and $c_{\g'}$. Moreover the curves is negatively asymptotic to $c_{\g \g'}$ (which is the only possible negative chord by homotopy reasons).

\begin{proof}
The strategy is as follows : we first lift $M \sms K$ to the standard hyperbolic space, whose unit bundle is contactomorphic to $J^1(S^2)$ with the standard contact structure. The contact form induced by the hyperbolic metric differs from the standard form by a conformal factor, so we interpolate between them (note that the standard contact form comes from the euclidian metric on $\RR^3$). After this procedure, we can use the Morse flow trees to explicit the holomorphic curves. \\

{\it (i) Euclidian metric} : \\We use the invariance of the product to work with the contact form coming from the euclidian metric on $\RR^3$. Let $g^s$ be a path of metrics on $\RR^3$, interpolating between the hyperbolic metric $g_h$ and the flat metric $g_\eu$. It induces an exact Lagrangian cobordism $L \subset (W, \b)$, between $\La_Q \subset (U_{g_h} \RR^3, \l_{g_h})$ and $\La_Q \subset (U_{g_\eu} \RR^3, \l_{g_\st})$. 

For this last form, there also exists a unique Reeb chord joining two fibers. The map obtained by counting holomorphic strips in $W$, with boundary in $L$, is a quasi-isomorphism hence it is trivial at the chain level (curves with negatively asymptotic to several curves aren't taken into account, since there is no contractible Reeb chord). \\~\\

{\it (ii) 1-jets space} : \\
We now use the usual exact contactomorphism between the euclidian unit bundle $(U_{g_\eu} \RR^3, \l_\eu)$ and the $1$-jet space $(J^1(S^2), \l_\st)$, with coordinates as follows
\[J^1(S^2) = \{(s, q, p) \in \RR\times  \RR^3 \times T_q \RR^3\ |\ |q|^2 =1, q \cdot p =0\}.\] 
The map now takes the form 
\begin{align*}
     & J^1(S^2) \ra U \RR^3 = \{(\xx; v) \in \RR^3 \times T_z \RR^3, v^2 = 1 \} \\
     \phi : \ &  (t, q, p) \mt (tq + p  , q)
\end{align*}
and indeed $\phi^* \l_\eu = ds - pdq$.
The fiber $U_0 \RR^3$ becomes the zero section, and more generally the fiber $U_x \RR^3$ becomes the $1$-jet of the function $f_x : q \mt x \cdot q$.
Indeed we have $\phi^{-1}(U_x \RR^3) = \{ sq + p = x \} \subset J^1(S^2)$, and $p$ being orthogonal to $q$ we get
\begin{align*}
    s & =  x \cdot q \\
    p & =  x - sq = x - (x \cdot q) q \\
     & = d(x \cdot q).
\end{align*}
~\\

{\it (iii) Morse flow trees} : \\
In a $1$-jets space with the standard contact form, holomorphic curves are in correspondence with the Morse flow trees as proved in \cite{Ek05_Morse_flow_trees}. We give a brief review of the relevant part of this construction.

Given three functions $f_0, f_1, f_2 : S^2 \ra \RR$, Reeb chords going from $j^1 (f_i)$ à $j^1(f_j)$ (of Maslov degree $k$) correspond to positive critical points of $f_j - f_i$ (of Morse degree $k +1$). For all $0\leq i < j \leq 2$, pick a cord $c_{ij} \in \C(j^1(f_i), j^1(f_j))$, corresponding to a critical point $q_{ij} \in S^2$. 
Then a Morse flow trees of asymptotics $(q_{01}, q_{12}; q_{02})$ will be a collection of paths $p_{01}, p_{12}, p_{02} \subset S^2$ such that 
\begin{itemize}
    \item $p_{01}$ is a trajectory of $- \nabla (f_1 - f_0)$, going from $q_{01}$ to $q_* \in S^2$ ;
    \item $p_{12}$ is a trajectory of $- \nabla (f_2 - f_1)$, going from $q_{12}$ to $q_*$ ;
    \item $p_{02}$ is a trajectory of $- \nabla (f_2 - f_0)$, going from $q_*$ to $q_{02}$.
\end{itemize}
Note that a path can be constant, in which case $q_*$ coincides with one of the critical points.
If $|q_{01}| + |q_{12}| - |q_{02}| = 2$ (ie if $|c_{01}| + |c_{12}| - |c_{02}| = 1$), those trees will be rigid, and the result from \cite{Ek05_Morse_flow_trees} affirm that those trees are in correspondence with rigid holomorphic pair of pants. \\

{\it (iv) Product in $J^1(S^2)$} : \\
The function $f_y - f_x : q \mt q \cdot (y - x)$ presents two  critical points, $q_{xy} = \frac{y-x}{|y - x|}$ and its opposite, corresponding to Reeb chords projecting to the geodesic between $x$ and $y$. Denote $c_{xy}$ the chord going from $\La_x$ to $\La_y$, corresponding to $q_{xy}$. Then for any triplet of points $x, y, z \in \RR^3$, there exists a unique rigide Morse flow tree asymptotic to $(q_{xy}, q_{yz}; q_{xz})$.

Indeed $q_{xz}$ is a maximum de $f_z - f_x$, hence such a Morse flow tree must a constant negative branch (ie $q_* = q_{xz}$). Moreover, there is a unique trajectory of $- \nabla(f_y-f_x)$ going from the maximum $q_{xy}$ to $q_{xz}$, et similarly for $q_{yz}$. 
\end{proof}

\begin{rmk}
Note that $U\RR^3$ is non compact, however the curves stay bounded (for fixed asymptotics) since it can be seen as the completion of a sutured contact manifold. 
Alternatively we could use the maximum principle from \cite[Theorem 8.4]{BKO19_Formal_boundary_hyperb_knot_compl}. 
\end{rmk}

\section{The complement of the conormal of the knot} \label{sec.UM-UK}

We first describe the gluing of a circular contact handle $S^1 \times D^2 \times D^2$ along the horizontal boundary $R_-$, and prove that sutured Legendrian homology is unchanged during this operation. This result stems from a 1-parameter family of contact manifold constructed in \cite{CGHH10_Sutures}, stretching the gluing locus.

We finally show that the contact manifold $UM \sms \N(\La_K)$ can be obtained from $U(M \sms \N(K))$ by gluing two of those handles, concluding the proof of our main theorem. The handles being glued along (a submanifold of) the suture of the previous section, we will have to perturb the contact form, displacing the suture to which it is adapted.

\subsection{Circular handles} \label{ssec.Circ_handle}

\paragraph{Attaching handles.}
Consider the $5$-dimensional handle 
\[H = \{ (s, x, y) \in S^1 \times \RR^2 \times \RR^2, |x|^2 \leq 1, |y|^2 \leq 1 \}, \]
endowed with the contact form $\lambda = (1+\e x \cdot y) ds - y dx$, with $\e \ll 1$. 
The Reeb vector field is directed by
\[R \sim \dd_s + \e(x \dd_x - y \dd_y),\]
whose projection to the $\RR^4$-factor is the Hamiltonian vector field of $\e x \cdot y$, see also \cref{lem.Reeb.S1xW} for a more general formula.
In particular, this contact handle have one Reeb orbit contained in $\{x=y=0\}$, whose Conley-Zehnder is $0$.

\begin{rmk}
There is no specific reason to choose the Liouville form $-y.dx$, instead of $xdy$ or $xdy + ydx$. Modifying this part of the contact form slightly change the following formulas, but the results still hold.
Also note that we could take $\e = 0$, and instead perturb the ball in $\RR^4$ into an ellipsoid.
\end{rmk}

\begin{figure}[h!]
\center
\def\svgwidth{10cm} 
\begingroup%
  \makeatletter%
  \providecommand\color[2][]{%
    \errmessage{(Inkscape) Color is used for the text in Inkscape, but the package 'color.sty' is not loaded}%
    \renewcommand\color[2][]{}%
  }%
  \providecommand\transparent[1]{%
    \errmessage{(Inkscape) Transparency is used (non-zero) for the text in Inkscape, but the package 'transparent.sty' is not loaded}%
    \renewcommand\transparent[1]{}%
  }%
  \providecommand\rotatebox[2]{#2}%
  \newcommand*\fsize{\dimexpr\f@size pt\relax}%
  \newcommand*\lineheight[1]{\fontsize{\fsize}{#1\fsize}\selectfont}%
  \ifx\svgwidth\undefined%
    \setlength{\unitlength}{314.03052142bp}%
    \ifx\svgscale\undefined%
      \relax%
    \else%
      \setlength{\unitlength}{\unitlength * \real{\svgscale}}%
    \fi%
  \else%
    \setlength{\unitlength}{\svgwidth}%
  \fi%
  \global\let\svgwidth\undefined%
  \global\let\svgscale\undefined%
  \makeatother%
  \begin{picture}(1,0.95003942)%
    \lineheight{1}%
    \setlength\tabcolsep{0pt}%
    \put(0,0){\includegraphics[width=\unitlength,page=1]{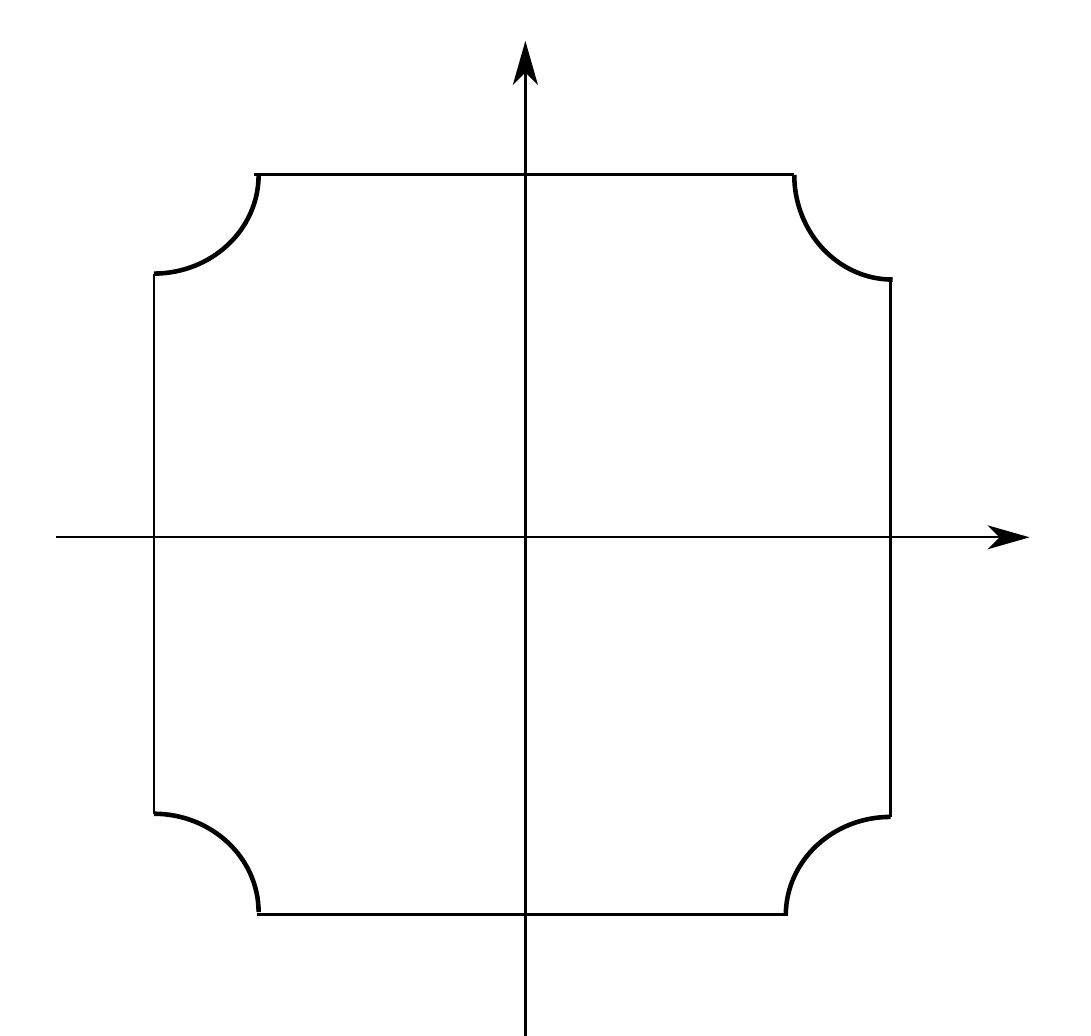}}%
    \put(0.89790005,0.49222451){\color[rgb]{0,0,0}\makebox(0,0)[lt]{\lineheight{1.25}\smash{\begin{tabular}[t]{l}$x$\end{tabular}}}}%
    \put(0.51930187,0.90649021){\color[rgb]{0,0,0}\makebox(0,0)[lt]{\lineheight{1.25}\smash{\begin{tabular}[t]{l}$y$\end{tabular}}}}%
    \put(0.31819582,0.04062089){\color[rgb]{0,0,0}\makebox(0,0)[lt]{\lineheight{1.25}\smash{\begin{tabular}[t]{l}$\dd_- H$\end{tabular}}}}%
    \put(-0.00475795,0.50675536){\color[rgb]{0,0,0}\makebox(0,0)[lt]{\lineheight{1.25}\smash{\begin{tabular}[t]{l}$\dd_+ H$\end{tabular}}}}%
    \put(0,0){\includegraphics[width=\unitlength,page=2]{Anse_hyperb.pdf}}%
  \end{picture}%
\endgroup%

\caption{The Reeb vector field (projected) on a circular hyperbolic handle. The dotted lines bound the gluing locus.}
\label{fig.anse_hyp}
\end{figure}

After removing a neighbourhood of the corners, such that the Reeb vector field is tangent to the newly created boundary, we get a manifold with with corners. Moreover the Reeb vector is negatively transverse to $\dd_- H = S^1 \times D^2  \times S^1$, and negatively transverse to $\dd_+ H = S^1 \times S^1  \times D^2$, see \cref{fig.detach_handle}. 
However, it is {\it not} a sutured contact manifold, because the Liouville vector field is not outgoing everywhere. Regardless, we can still glue it to another sutured contact manifold $(V, \l)$, provided we have an exact symplectomorphism $\phi : \dd_- \tilde{H} \hra R_+(V)$.
We obtain that way a new sutured manifold $\tilde{V} = V \under\cup\phi H$, whose dividing set is unchanged.

\paragraph{Detaching handles.}
We now give a simple way to find an handle in a contact manifold $(V, \xi)$, with convex boundary. Choose a contact form $\l$ adapted to the boundary, of associated dividing set $\Ga$, and consider a convex hypersurface $\Si \subset (V,\xi)$ such that :
\begin{itemize}
    \item $\Si = \Si_\dd \cup \Si_0$, where $\Si_\dd \subset \dd V$ and $\Si_0 \subset \mr V$ ;
    \item $\l$ is adapted to $\Si$, with associated dividing set $\Ga_\Si$ ;
    \item $\Ga  = \Ga_\Si  \subset \Si_\dd$ ;
\end{itemize}
We refer to \cref{fig.detach_handle} for a depiction of the situation.

\begin{figure}[h!]
\center
\def\svgwidth{12.5cm}
\import{./Dessins/}{Detach_handle.pdf_tex}
\caption{Detaching an handle. $\Ga$ is the dividing sets of both $\dd V$ and $\dd V_0 = \Si$.}
\label{fig.detach_handle}
\end{figure}

Denote by
\begin{itemize}
    \item $V_0$ the manifold obtained by removing the part between $\Si_0$ and $V \sms \Si_\dd$, with convex boundary $\dd V_0 = \Si$ ;
    \item $H$ the sutured contact manifold 
    \[H = I\times (R_+ \sqcup R_-) \cup V\sms V_0. \]
    This manifold has suture $\Ga$, and the horizontal boundaries are $\Si$ and $\dd V$.
\end{itemize}

Then $V$ can be reconstructed by gluing the sutured manifold $(H, \l)$ to $V_0$, by a symplectomorphism $R_\pm(H) \ra R_\mp(V_0)$ (which is the identity on the boundary), as described in \cite[§8]{CGHH10_Sutures} (note that this is different from the gluing in \cite[§4.3]{CGHH10_Sutures}).

\begin{thm}\label{thm.detach_handle} Consider a convex hypersurface $\Si \subset (V,\l)$ as above. 
Then 
\[CC(V, \l) \simeq CC(V_0, \l) \otimes CC(H, \l). \]
Moreover if $\La \subset V_0$ is a Legendrian\footnote{Here we assume that neither $V_0$ or $H$ has a contractible Reeb orbit. If not, the Legendrian homology will a module over the contact homology generated by contractible chords.}, we have
\[ LC(\La; V, \l) \simeq LC(\La; V_0, \l) \]
\end{thm}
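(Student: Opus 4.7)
The plan is to combine a neck-stretching argument of SFT type with the sutured gluing framework of \cite[\S 8]{CGHH10_Sutures}. Since $V$ is reconstructed by gluing the sutured manifold $(H, \l)$ to $V_0$ along $\Si_0$ via an exact symplectomorphism of $R_\pm$, there is a natural one-parameter family of contact forms $\l_T$ on $V$ that agree with $\l$ on $V_0$ and on $H$ but insert an elongated collar $[-T, T] \times \Si_0$ between them. By the invariance result \cref{thm.LH_invrt}, each $\l_T$ yields the same Legendrian and contact homology as $\l$.

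First I would study the Reeb dynamics of $\l_T$. Reeb chords and orbits fall into three types: those entirely inside $V_0$, those entirely inside $H$, and those crossing the stretched collar. The first two classes are independent of $T$, while the third has action bounded below by a constant multiple of $T$. Fix an action cutoff $A$: then for $T$ large enough, every generator of the action-filtered complex $LC^{\leq A}(\La; V, \l_T)$ is a chord of $\La$ lying in $V_0$, since $\La \subset V_0$. This gives a canonical identification with the generators of $LC^{\leq A}(\La; V_0, \l)$. The analogous statement for closed Reeb orbits shows that generators of $CC^{\leq A}(V, \l_T)$ split as orbits of $V_0$ or of $H$.

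Next, I would upgrade these identifications to chain-level quasi-isomorphisms using the colimit argument of \cref{lem.ColimFormes}: choose sequences $A_n \to \ity$ and $T_n \to \ity$ at compatible rates so that the action-filtered complexes stabilise, and then compute the colimit on both sides to get $LC(\La; V, \l) \simeq LC(\La; V_0, \l)$. For the contact homology statement, the same family combined with SFT compactness \cite{BEHWZ03_Compactness_SFT} in the neck-stretching limit shows that every rigid holomorphic building breaks into components lying entirely in $\RR \times V_0$ or entirely in $\RR \times H$; no rigid piece in the neck $\RR \times \Si_0$ can connect the two sides, since the relevant moduli spaces on a Liouville piece have the wrong index. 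This yields the tensor product decomposition $CC(V, \l) \simeq CC(V_0, \l) \otimes CC(H, \l)$, and for the Legendrian case the absence of $H$-components (as $\La$ lies entirely in $V_0$ and curves with boundary on $\RR \times \La$ cannot separate from their boundary) gives the claimed quasi-isomorphism.

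The main obstacle will be compactness of holomorphic curves in the stretched region near the corner $\Si_0 \cap \Ga$, where the neck meets the dividing set. \emph{A priori} a curve could escape toward $\Ga$, invalidating the splitting. To handle this I would use tailored almost complex structures that are $t$-invariant near the vertical boundary (as in \cref{ssec.LH}) and invoke the containment estimates of \cite[Lemma 5.5 and Proposition 5.18]{CGHH10_Sutures} to trap curves away from $\Ga$. Once these bounds are secured, the SFT compactness input and the resulting algebraic splitting follow by standard arguments.
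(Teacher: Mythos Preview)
Your approach is correct and coincides with the paper's: the paper's proof is a one-line invocation of \cite[Theorem~8.1]{CGHH10_Sutures}, and the neck-stretching plus colimit argument you outline is exactly the mechanism behind that theorem (stretch the gluing collar so that crossing orbits have arbitrarily large action, then use that the neck is a contactisation with no Reeb orbits so holomorphic buildings cannot connect the two sides). You are unpacking the citation rather than invoking it, but the underlying route is the same.

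One small correction: $\Si_0 \cap \Ga = \emptyset$ since $\Si_0 \subset \mathring V$ and $\Ga \subset \dd V$; the genuine compactness issue sits near $\partial \Si_0 \subset \Si_\dd$, where the stretched collar meets the horizontal boundary, and your plan to control it with the tailored almost complex structures and the estimates of \cite[Lemma~5.5, Proposition~5.18]{CGHH10_Sutures} is the correct one.
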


\begin{proof}
It is a direct consequence of \cite[Theorem 8.1]{CGHH10_Sutures} : by hypothesis, Reeb orbits and Reeb chords don't intersect the gluing locus $\Si_0$.
\end{proof}

For the circular handles described in the previous section, we can compute the sutured contact homology of the handles by a simple action argument :
\begin{cor}
If $\tilde V$ is obtained from $V$ by gluing a circular handle, and that the map $H_1(\TT^2) \ra H_1(R_+(V))$ induced by the gluing is injective. Then the contact homologies are related by 
\[CH(\tilde V) \simeq CH(V) \otimes \ZZ[(c_k)_{k \in \NN \sms 0}].\]
\end{cor}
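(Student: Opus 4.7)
The strategy is to combine the handle-attachment formula \cref{thm.detach_handle} with a direct computation of the contact homology of the circular handle $H$. First I would apply \cref{thm.detach_handle} to the configuration: $\tilde V$ is obtained from $V$ by gluing the circular handle $H$ along $\dd_- H \hookrightarrow R_+(V)$, and the interior hypersurface $\Si_0$ can be chosen to be $\dd_- H$, so that the detachment theorem yields
\[ CC(\tilde V, \l) \simeq CC(V, \l) \otimes CC(H, \l). \]
Thus it suffices to prove $CH(H) \simeq \ZZ[(c_k)_{k \in \NN \sms 0}]$.

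Next I would enumerate the Reeb orbits of $H$. From the explicit formula $R \sim \dd_s + \e(x\dd_x - y\dd_y)$, the only closed orbits sit inside $\{x = y = 0\}$, so they are the multiples $c_k$ for $k \geq 1$ of the simple orbit $c_1 = S^1 \times \{0\} \times \{0\}$. The transverse linearised flow is positive hyperbolic with eigenvalues $e^{\pm \e T_k}$ in each of the two symplectic $2$-planes, so each iterate is non-degenerate and the Conley-Zehnder index $\CZ(c_k) = 0$ is preserved under iteration. With the standard grading convention this places every $c_k$ in the same fixed degree $d$ (in the ambient dimensions relevant to the applications, $d = n+2 \geq 3$).

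The injectivity hypothesis on $H_1(\TT^2) \to H_1(R_+(V))$ enters at this point: it guarantees that the class $[c_1]$ is non-trivial in $H_1(\tilde V)$ and that the iterates $c_k$ lie in pairwise distinct homology classes. Consequently no Reeb chord or orbit coming from $V$ can be homologous to any $c_k$, so no broken holomorphic building in $\RR \times \tilde V$ can mix orbits of $H$ with orbits of $V$, justifying that the only possible curves contributing to $\dd c_k$ have all asymptotes among the $c_j$. I would then show the differential vanishes for degree reasons: a rigid $J$-holomorphic curve with positive puncture $c_k$ and negative punctures $c_{k_1},\ldots,c_{k_j}$ must satisfy $d - jd - 1 = 0$, which has no solution in $j \geq 0$ when $d \geq 2$. (For $j = 0$ the would-be cap is excluded by positivity of area, since the simple orbit has positive action.)

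The main technical obstacle is the last step: ensuring that the set of Reeb orbits is exactly $\{c_k\}_{k \geq 1}$ with no extra orbits arising from the completion of the sutured manifold $H$, and that the hypothesis on $H_1$ is indeed strong enough to forbid homologous mixing with orbits of $V$ in the product splitting. Once these points are settled, combining the three ingredients — handle decomposition, orbit enumeration, and vanishing of the differential — gives
\[ CH(H) \simeq \ZZ[(c_k)_{k \in \NN \sms 0}], \]
all generators in the same degree, and the claim follows from the tensor product formula above.
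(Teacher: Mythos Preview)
Your overall architecture matches the paper's: apply \cref{thm.detach_handle} to get $CC(\tilde V)\simeq CC(V)\otimes CC(H)$, then compute $CC(H)$ by showing the differential vanishes on the iterates $c_k$ of the core orbit. The divergence is in how the vanishing is established.

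The paper does \emph{not} argue by degree. It first uses the homotopy constraint --- a holomorphic curve positively asymptotic to $c_k$ and negatively to $c_{a_1},\dots,c_{a_l}$ forces $\sum a_i = k$ in $H_1$, and this is precisely where the injectivity hypothesis $H_1(\TT^2)\hookrightarrow H_1(R_+(V))$ is used, to ensure $[c_1]$ has infinite order so the relation is meaningful over $\ZZ$. Since the action of $c_j$ is $j$ times the action of $c_1$, any such curve has zero $d\l$-energy and is therefore a branched cover of the trivial cylinder over $c_1$; the paper then cites Fabert's result that multiple covers of trivial cylinders do not contribute to the differential. This argument is insensitive to the grading convention and to the ambient dimension.

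Your index argument is a legitimate alternative in the $5$-dimensional setting of the corollary: all $c_k$ share a common degree $d$ (because positive hyperbolic iterates have $\CZ(c_k)=0$), and $(l-1)d=-1$ has no solution for the relevant $d$. Two remarks, however. First, your stated role for the injectivity hypothesis --- to prevent mixing of $H$-orbits with $V$-orbits --- is misplaced: that separation is already furnished by the neck-stretching behind \cref{thm.detach_handle}, so in your approach the hypothesis is essentially idle (which is interesting, but you should say so rather than invoke it for something it is not needed for). Second, be careful with the claim $d=n+2$: that figure is quoted from \cref{MainThm.CC_M-K} for the specific conormal situation, not derived for the abstract handle of the corollary; you should compute $d$ directly from $\CZ=0$ in whatever convention you fix, and check $d\neq\pm1$.
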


\begin{proof}
The only orbit in the circular handle is the core $S^1 \times \{0,0\}$, which has Conley-Zehnder index $0$. Denote by $c_k$ the $k$-cover of this orbit. By homotopy reasons, the differential of $c_k$ can only have terms of the form $c_{a_1}...c_{a_l}$ such that $\sum a_i = k$. However, by action reason the curve has vanishing energy, so it is contained in the cylinder over the (simple) orbit. And according to \cite{Fab07_Obstruction_bundles_Action_filtration}, multiples covers of trivial holomorphic cylinders don't contribute to the differential.
\end{proof}

\subsection{From the unit bundle of the complement, to the complement of the conormal} \label{ssec.final}

In this section the relate the complement of the conormal $\La_K \subset U^* M$ to the unit bundle of the complement of $K \subset M$.

\begin{prop} \label{lem.U(-).-(U)} There exists a standard neighbourhood of $\La_K \subset UM$, and a contact form $\l$ adapted to the convex boundary, such that $(U M \sms \N(\La_K), \l)$ is obtained from $U(M \sms \N(K))$, endowed with an adapted contact form, by gluing to circular handles $(S^1 \times D^2 \times D^2, \l_H)$ along $R_-$. 
\end{prop}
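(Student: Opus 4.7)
The plan is to make the identification explicit via local coordinates, decompose the complement of $\N(\La_K)$, and reduce to a perturbation displacing the suture of $\dd V_0 := \dd U(M\sms \N(K))$. Fix a tubular neighbourhood $\N(K) \simeq S^1_s \times D^2_{\xx}$ of $K$ in $M$, equipped with a product metric, so that $U(\N(K)) \simeq S^1_s \times D^2_{\xx} \times S^2_\vv$ with $\vv = (v_s, v_1, v_2) \in S^2$. Then $\La_K = \{\xx = 0,\ v_s = 0\}$ is naturally parametrised by $(s,\theta)$ with $(v_1, v_2) = (\cos\theta, \sin\theta)$, and a natural standard neighbourhood is $\N(\La_K) = \{|\xx|^2 + v_s^2 < \e^2\}$, a rank-$3$ disk bundle over $\La_K \simeq T^2$. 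A direct check identifies this with the Weinstein model $J^1(T^2)$ up to contactomorphism.

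Writing $V_0 = U(M \sms \N(K))$, we have $UM \sms \N(\La_K) = V_0 \cup (U(\N(K)) \sms \N(\La_K))$. I would decompose the second piece according to $v_s$ into two caps $H_\pm = \{\pm v_s \geq \e/2\}$ and a collar $C = \{|v_s| \leq \e/2,\ |\xx|^2 + v_s^2 \geq \e^2\}$. Each cap is diffeomorphic to $S^1_s \times D^2_{\xx} \times D^2_\pm$, where $D^2_\pm$ is the closed hemisphere $\{\pm v_s \geq \e/2\} \subset S^2$, playing the role of the second $D^2$-factor in a circular handle; the collar $C$ is a thickening of the suture $\Ga = ST^*(\dd \N(K))$ of $\dd V_0$. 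At this stage the handle gluing regions $\{v_s = \pm \e/2\}$ abut the suture, which is incompatible with the setup of \cref{ssec.Circ_handle}.

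To remedy this, I would perturb the contact form on $UM$ near $\dd V_0$ by modifying the contact vector field transverse to $\dd V_0$, using the flexibility of convex hypersurfaces from \cref{lem.cvx_adapt}, to displace $\Ga$ out of the gluing locus. After this perturbation the collar $C$ is absorbed into $V_0$, and each cap glues to the enlarged $V_0$ along a codimension-zero piece of $R_-$ diffeomorphic to $\dd_- H$. The final step is to identify each cap, with the restricted perturbed contact form, with the model handle $(S^1 \times D^2 \times D^2,\ (1 + \e\, \xx \cdot \yy)\,ds - \yy \cdot d\xx)$ from \cref{ssec.Circ_handle}: each cap carries a single closed Reeb orbit at $\xx = 0$, $v_s = \pm 1$, which projects to the geodesic $K$ and matches the core orbit of the handle. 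A Moser interpolation between the two Liouville primitives (constant along $\dd_- H$) then yields the required contactomorphism. The main obstacle will be this last step, as the Moser argument must simultaneously preserve the core Reeb dynamics and match the Liouville form on the gluing boundary.
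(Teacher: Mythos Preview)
Your outline matches the paper's strategy closely: set up local coordinates on $U(\N(K))$, observe that $UM\sms\N(\La_K)$ is topologically $V_0$ with two caps attached, note that the attaching regions meet the suture of $\dd V_0$, and displace the suture before gluing. The paper carries this out via stereographic charts on the fibre $S^2$, turning each cap into a genuine $S^1\times D^2_q\times D^2_p$ with contact form $(1-|p|^2)\,ds - 2p\cdot dq$ up to a conformal factor; the displacement is then the explicit perturbation $\l_\e=(1+2\e\,p\cdot q)\,\l_0$, for which one computes $\Ga_\e=\{p\cdot q=(1+|p|^2)\d\e+O(\e^2)\}$, disjoint from $\{p=0\}$.

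There are two points where your proposal is looser than the paper and where you should be careful. First, ``perturb the contact form by modifying the contact vector field, using \cref{lem.cvx_adapt}'' is not quite the mechanism: changing the transverse contact vector field moves the dividing set, but \cref{thm.detach_handle} needs the \emph{Reeb vector field} of a fixed contact form to be transverse to the horizontal boundary. The paper therefore changes the contact form itself (within the same contact structure) by an explicit conformal factor and recomputes the Reeb field; this is the actual content of the proof, and you should supply a comparable computation rather than invoke abstract flexibility.

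Second, your final Moser step---identifying each cap with the model handle while matching both the core orbit and the Liouville form on $\dd_-H$---is not how the paper closes the argument, and is where you correctly sense trouble. The paper avoids it entirely: in stereographic coordinates the cap is already $S^1\times D^2\times D^2$, one checks directly that $\l_\eu$ is adapted to the boundary of $V_1=V_0\cup\{|p|^2\le\e'\}$, and then an explicit contact vector field $v=q\,\dd_q-\tfrac{1-|p|^2}{1+|p|^2}\,p\,\dd_p$ interpolates between $\dd V_1$ and the standard $\dd(UM\sms N)$ through convex hypersurfaces. No Moser argument on the handle is needed; if you try one, matching the gluing Liouville form and the core dynamics simultaneously is indeed delicate and unnecessary.
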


Topologically, $UM \sms \La_K$ is indeed obtained from $UM \sms \La_K$ (here we omits the neighbourhood for ease of read) by gluing two handles $S^1 \times D^2 \times D^2$ along two tori (which are, in the coordinates defined below, $\{z^2=\d, \t =\pm 1, \eta=0\}$). However those tori are included in the suture associated to a contact form induced by a metric (cylindrical near the boundary). We will change the contact form so that is adapted to a suture disjoint from the gluing loci, to recover a situation to which \cref{thm.detach_handle} applies. We will then have to check that the resulting contact manifold is indeed the complement of a standard neighbourhood of $\La_K$.

The full procedure is illustrated \cref{fig.hyperb.recollmt_anse}, on the left in stereographical charts (relatively to the fiber), and on the right as a sphere bundle. The Reeb vector field is depicted in blue, and the suture in red. In charts, the Reeb vector field is dotted when it is directed by the the $S^1$ factor, and the suture is depicted by one or two points (resp. an hollowed point) when a whole circle (resp. only one point) of the fiber is part of the suture.

\begin{figure}[h!]
\center
\def\svgwidth{12.2cm}
\import{./Dessins/}{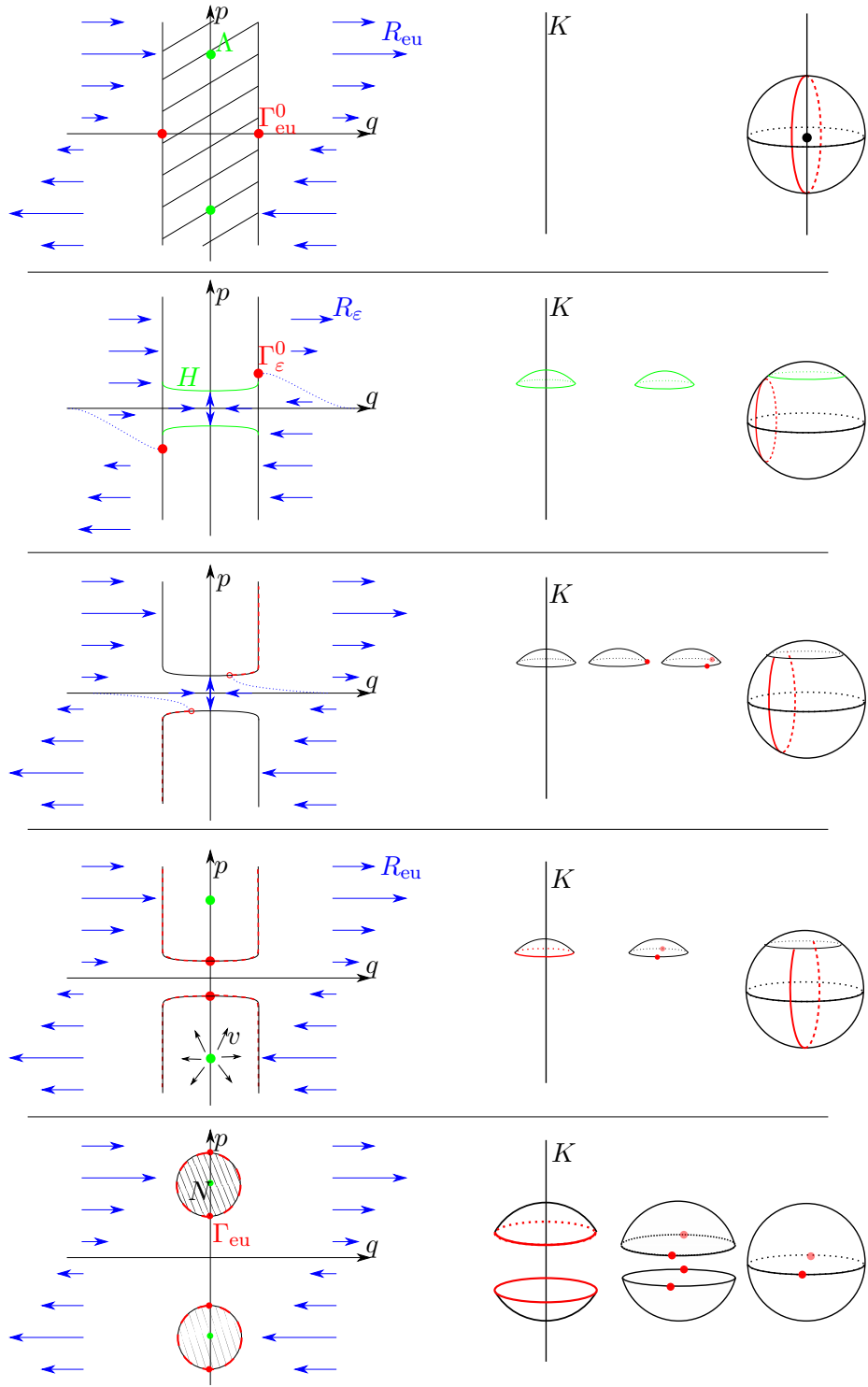}
\caption{Gluing an handle, on the left in stereographical charts, and on the right in a sphere bundle. See \cref{ssec.final} for a more precise description.}
\label{fig.hyperb.recollmt_anse} 
\end{figure}

Line by line are depicted :
\begin{itemize}
    \item the contact manifold $(V_0, \l_\eu)$, with its convex boundary ;
    \item the contact manifold $(V_0, \l_\e)$ with a glued handle ; 
    \item the perturbed form $(V_1, \l_{\e'})$ for $\e' << \e$ ; 
    \item the contact manifold $(V_1, \l_\eu)$ and a contact vector field transverse to the boundary ; 
    \item the contact manifold $(V, \l_\eu)$, with its convex boundary.
\end{itemize}

\subsubsection{Stereographic charts} \label{sssec.Stereo_charts}

Consider the unit tangent bundle coming from the euclidian metric on a neighbourhood of the knot. Starting with coordinates $(t, z) \in S^1 \times D^2$ near the knot, such that $K = S^1 \times \{0\}$, we denote by $UM \subset TM$ the manifold obtained by taking the unit fiber relatively to the metric $dt^2 + dz^2$, coming with coordinates $(\t,\eta) \in \RR \times \CC$ on the fiber : 
\begin{align*}
     TM \supset\ & U(S^1 \times D^2) \simeq S^1_t \times D^2_z \times S^2_{\t, \eta} \\
      & = \{(t, z; \t, \eta) \in \RR/\ZZ \times \CC \times \RR \times \CC \ |\  \t^2 + |\eta|^2 =1 \}.  
\end{align*}
The conormal of the knot becomes $\La = \{z= 0, \t = 0\}$, and the contact form induced by this metric is
$\l_\eu = -\t dt - \eta\cdot dz$.
Moreover the Reeb vector field is 
\begin{equation} \label{eqn.Hyperb.Reeb_eu}
    R_\eu = \t \dd_t + \eta . \dd_z.
\end{equation} 

\paragraph{Standard neighbourhood.}
We now define a standard neighbourhood of the Legendrian, which will have convex boundary :
\[ N = \{(t, z; \t, \eta) | \t^2+ |\eta|^2=1, \t^2+ |z|^2 < \d  \} \subset UM,  \]
and we set $V = UM \sms N$. 
Then for $\d$ small enough, $N$ is a perturbation of a standard neighbourhood of $\La$, with convex boundary, and the contact form $\l_\eu$ is adapted to the suture 
\[ \Ga_\eu = \{ (t, z; \t, \eta)\ | \t^2 + \eta^2 = 1, \t^2 + |z|^2 = \d,  \eta \cdot z = 0 \}. \]

\paragraph{Stereographic charts.}
We will use two charts to reduces the situation to an simpler manifold, given by the stereographic projection of the fiber relatively to the point $(\t, \eta)=(1, 0)$ :
\begin{align*}
 & S^1 \times T^* D^2 \lra U \subset UM\\
 \vphi_\pm : &\ (s, q, p) \mt (t = s, z=q, \t = \pm \frac{1- |p|^2}{1 + |p|^2}, \eta = \frac{2p}{1+ |p|^2})
\end{align*}
The projection to the base is the map forgetting the coordinate $p$,  and the Legendrian now becomes
\[ \vphi_\pm^*(\La) = \{q=0, |p|=1\}.\]

The inverse of those maps are $\vphi_\pm^{-1} : (t, z; \t, \eta) \mt (s=t, q=z, p=\frac{\eta}{1 \pm \t})$, and the change of charts is given by $(s, q, p) \mt (s, q,1/p)$ (here $p$ is seen as a complex).

We now compute  
\begin{equation}\label{eqn.hyperb.forme_eu.cartes}
  \vphi_\pm^* \l_\eu = \mp \frac{1- |p|^2}{1 + |p|^2}ds - \frac{2}{1+|p|^2} p.dq  
\end{equation}
In those charts the conormal is $\vphi_\pm^{-1}(\La) = \{q=0, |p|=1 \}$, and the Reeb vector field becomes 
\begin{equation} \label{eqn.hyperb.Reeb_eu.cartes}
    \vphi_\pm^* R_\eu  
     = \pm \frac{1- |p|^2}{1 + |p|^2}  \dd_s + \frac{2}{1 + |p|^2} p.\dd_q  
\end{equation}

Moreover $\vphi_\pm^{-1} (N) = \{ q^2 + (\frac{1-p^2}{1+p^2})^2 < \d \}$ and  $\vphi_\pm^{-1} (\Ga_\eu) = \{p \cdot q = 0\}$. 

\subsubsection{Modifying the contact form} \label{sssec.Modify_contact_form}

We construct a contact form on $V = UM \sms N$, which can be recovered from $V_0$, unit bundle of the complement of the knot, by gluing two circular handles on the horizontal boundary. 
Recall that, topologically, we want to glue those handles along the tori $\{|z| = \d, \t = \pm 1, \eta = 0 \}$, however they are part of the suture. We will now perturb the contact form so that it is adapted to a suture disjoint from those tori, so we will be able to apply the gluing result \cref{thm.detach_handle}.

\begin{rmk}[Prototype]
In a $1$-jets space, endowed with the standard contact form (which is adapted to the suture associated to the radial contact vector field), we can displace the suture away from the zero section. Indeed the Reeb vector field is $\dd_s$, hence  $X = (s+ \e)\dd_s + p\dd_p$ is a contact vector field transverse to the hypersurface $\{ s^2 + p^2 = \d \}$, 
and the suture induced is then $\Ga_X = \{X \in \xi\} = \{s^2 + p^2=\d, s = -\e \}$.
We will use a similar construction to displace the suture to which $\l_\eu$ is adapted.
\end{rmk}

We first compute the Reeb vector field of the contact form $\lambda = \phi(x)(f(x) ds + \b)$ on $S^1_s \times W_x$, where $(W, \b)$ is a Liouville manifold.

\begin{lem} \label{lem.Reeb.S1xW} The contact condition is $ds \we (fd\b - df \we \b) \we(d\b)^{n-1} > 0$,
and the Reeb vector field is
\[R = \frac{1}{\phi^2}\left( \frac{\phi + d\phi(Y)}{f - df(Y)} (\dd_s+X_f) +  \frac{ df(X_\phi)}{f -df(Y)} Y + X_\phi\right)\]
where $Y$ is the Liouville vector field associated to $\b$ and $X_f$ is defined by $\i_{X_f} d\b = df$.
\end{lem}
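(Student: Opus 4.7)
The plan is direct computation: write $R = a\dd_s + V$ with $V$ tangent to $W$, expand $d\lambda$, and impose $\iota_R d\lambda = 0$ together with the normalisation $\lambda(R) = 1$.

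First I would expand
\[d\lambda = (f\,d\phi + \phi\,df)\wedge ds + d\phi \wedge \b + \phi\, d\b,\]
and rewrite this as $d\lambda = \phi(df\wedge ds + d\b) + (d\phi/\phi)\wedge \lambda$. Since $\lambda\wedge\lambda=0$, every term of $(d\lambda)^n$ containing the factor $d\phi/\phi \wedge \lambda$ is killed by the outer $\lambda$, so the contact condition $\lambda \wedge (d\lambda)^n > 0$ reduces to a positive multiple of $\lambda\wedge(df\wedge ds + d\b)^n>0$. Expanding via the binomial theorem, using $(df\wedge ds)^2 = 0$ and that $\b\wedge(d\b)^n = 0$ on the $2n$-dimensional $W$, gives the stated condition $ds\wedge(f\,d\b - df\wedge\b)\wedge(d\b)^{n-1}>0$ up to a positive factor.

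Next I would plug $R = a\dd_s + V$ into $\iota_R d\lambda = 0$ and separate the $ds$-component from the $T^*W$-component, obtaining the pair of equations
\begin{align*}
f\,d\phi(V) + \phi\,df(V) &= 0, \\
\phi\,\iota_V d\b &= af\,d\phi + a\phi\,df + \b(V)\,d\phi - d\phi(V)\,\b.
\end{align*}
Since $\iota_{X_\phi} d\b = d\phi$, $\iota_{X_f} d\b = df$, and $\iota_Y d\b = \b$, the second equation forces the ansatz
\[V = \frac{af + \b(V)}{\phi}\,X_\phi + a\,X_f - \frac{d\phi(V)}{\phi}\,Y.\]

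To close the system, I would exploit the pairing identities $\b(Y) = \iota_Y\iota_Y d\b = 0$; $\b(X_\phi) = -d\phi(Y)$ and $\b(X_f) = -df(Y)$ (both from antisymmetry of $d\b$); and $d\phi(X_\phi) = df(X_f) = 0$, $d\phi(X_f) = -df(X_\phi)$. Evaluating the ansatz above against $\b$, $d\phi$, and $df$, combining with the $ds$-equation and the normalisation $\lambda(R) = \phi(fa + \b(V)) = 1$, a short linear algebra manipulation yields $a = (\phi + d\phi(Y))/(\phi^2(f - df(Y)))$, and substituting back produces the stated decomposition of $R$.

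The hard part is purely bookkeeping: the three unknown scalars $a$, $\b(V)$, $d\phi(V)$ couple through a small linear system driven by the pairings above, with serious risk of sign errors. No conceptual obstruction appears; once the $W$-component of $\iota_R d\lambda = 0$ is recognised as forcing $V \in \langle X_\phi, X_f, Y\rangle$, the remainder reduces to linear algebra.
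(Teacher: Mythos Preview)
Your proposal is correct and the bookkeeping closes as you describe. The route differs from the paper's only in organisation: the paper first computes the Reeb field $R_0$ of $\l_0 = f\,ds + \b$ (the case $\phi \equiv 1$), obtaining $R_0 = \frac{1}{f-df(Y)}(\dd_s + X_f)$, and then invokes the standard conformal-change formula $R = \phi^{-2}(\phi R_0 + V)$ with $V \in \ker\l_0$ determined by $(\i_V d\l_0)_{\rest\xi} = d\phi_{\rest\xi}$; it parametrises $V = \b(v)\dd_s - f v$ for $v \in TW$ and solves for $v$ in the span of $Y, X_f, X_\phi$. You instead attack the full form in one shot, writing $R = a\dd_s + V$ with $V \in TW$ (not in $\xi$) and solving the coupled linear system directly. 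Your approach is slightly more elementary since it does not rely on the conformal-rescaling formula as a black box; the paper's two-step decomposition makes the role of the $\phi$-correction more transparent. Either way the computation reduces to the same pairing identities you list, and both close without obstruction.
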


In particular the projection of the Reeb vector field, parallel to the $S^1$ factor, is directed by 
\[ \pi(R) \sim  \big(\phi + d\phi(Y)\big) X_f +  \big(f - df(Y)\big) X_\phi + df(X_\phi) Y. \]

\begin{proof}
The contact condition is a simple computation. For the formula of the Reeb vector field, we start by computing the Reeb vector field of $\l_0 = f.ds + d\b$. We get 
\[R_0 = \frac{1}{f + \b(X_f)}(\dd_ s + X_f).\]

By a quick computation, the Reeb vector field of $\l$ is $R = \frac{1}{\phi^2}(\phi R_0 + V)$, where $V \in \xi$ satisfies $(\i_V d\l_0)_{\rest \xi} = d\phi_{\rest \xi}$. We split this vector into $V =  \b(v) \dd_s - f v$, where $v \in TW$, and we now compute $v$. 
We have
\begin{align*}
    \i_{X_\phi}d\b_{\rest \xi} & =   d\phi_{\rest \xi} = \i_{\b(v) \dd_s - f v}(df \wedge ds + d\b) \\
     & = -f df(v) ds - \b(v) df - f \i_v d\b \\
     & = df(v)\b  - \b(v) df - f \i_v d\b \quad \text{ since } \xi = \ker fds + \b \\
     & = \i_{df(v) Y -\b(v) X_f-f v} d\b     
\end{align*}
Hence $fv = df(v) Y - \b(v) X_f - X_\phi$. Applying $\b$ and $df$ we obtain
\begin{align*}
    & (f+\b(X_f))\b(v) = - \b(X_\phi) \\
    & (f- df(Y))df(v) = - df(X_\phi),
\end{align*}
and using the identity $\b(X_f) = 
- df(Y)$, the vector $V$ can be written
\begin{align*}
    V & = \b(v) \dd_s - f v \\
     & = \b(v) (\dd_s +X_f) - df(v) Y +X_\phi \\
     & = \frac{1}{f+ \b(X_f)} \big(  -\b(X_\phi) (\dd_s +X_f) +df(X_\phi) Y  \big)  +  X_\phi
\end{align*}

Finally the Reeb vector field associated to $\l$ is  
\begin{align*}
R & = \frac{1}{\phi^2}(\phi R_0 + V) \\
  & = \frac{1}{\phi^2} \left( \frac{\phi - \b(X_\phi)}{f + \b(X_f)}(\dd_ s + X_f) + \frac{ df(X_\phi)}{f+ \b(X_f)} Y   +  X_\phi \right)
\end{align*}
\end{proof}

In the previously defined coordinates, the unit bundle of the complement of the knot is 
\[ V_0 = \{ q^2 \geq \delta \} \subset UM.\]

\begin{lem}
This manifold has convex boundary, and the contact form $\l_\eu$ is adapted to the suture 
\[ \Ga_0 = \{ (s, q, p), q^2 = \d, p\cdot q =0  \}. \]
\end{lem}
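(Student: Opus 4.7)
The strategy is to apply \cref{lem.cvx_adapt} by a direct computation in stereographic coordinates: identify $R_\pm$ and the dividing set, check the Reeb transversality with correct signs, and verify that $\lambda_\eu|_{\Gamma_0}$ is a positive contact form. The transversality step is a one-line computation, while the positivity follows cleanly from the structural result opening this section.

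First, using the explicit formula \cref{eqn.hyperb.Reeb_eu.cartes} for the Reeb vector field, I compute
\[
R_\eu(|q|^2) \;=\; 2 q \cdot \tfrac{2p}{1+|p|^2} \;=\; \frac{4\, p\cdot q}{1+|p|^2}.
\]
On $\dd V_0 = \{|q|^2 = \d\}$ this vanishes exactly on $\Ga_0 = \{|q|^2 = \d,\, p\cdot q = 0\}$ and has the sign of $p\cdot q$ elsewhere. Hence $\Ga_0$ separates $\dd V_0$ into $R_+ = \{|q|^2 = \d,\, p\cdot q > 0\}$, on which $R_\eu$ is outgoing, and $R_- = \{|q|^2 = \d,\, p\cdot q < 0\}$, on which it is ingoing, which is the Reeb transversality required by \cref{lem.cvx_adapt}.

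Second, to check that $\Ga_0$ carries a positive contact structure induced by $\lambda_\eu$, I invoke the Proposition at the start of \cref{sec.U(M-K)}: the manifold $V_0$ is precisely the unit tangent bundle $U(M\sms N(K))$ for the Euclidean metric $dt^2 + dz^2$, which is a product near $\dd N(K)$, so $V_0$ has convex boundary with dividing set $ST^*(\dd N(K))$, and the metric-induced form restricts there to the canonical (positive) contact form. The identification with $\Ga_0$ in the stereographic chart is direct: the condition $p\cdot q = 0$ is equivalent to $\eta = \tfrac{2p}{1+|p|^2}$ annihilating the transverse vector $z\cdot\dd_z$ to $\dd N(K)$, which is exactly the condition cutting out the unit cotangent bundle of the boundary torus inside $T^*M|_{\dd N(K)}$. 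The main (mild) subtlety is to make sure the two stereographic charts $\vphi_\pm$ yield the same $\Ga_0$, which is automatic since the change of charts $p \mapsto 1/p$ preserves $p\cdot q = 0$, in accordance with the intrinsic nature of the dividing set.
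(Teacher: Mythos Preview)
Your approach is correct and genuinely different from the paper's. The paper proves the contact condition on $\Gamma_0$ by a direct computation in stereographic coordinates: it writes down an explicit basis $\partial_s,\ V = iq\,\partial_p,\ W = iq\,\partial_q + ip\,\partial_p$ of $T\Gamma_0$ and evaluates $\lambda_\eu \wedge d\lambda_\eu$ on it, then checks that the induced Liouville field on $R_\pm$ is outgoing. Your route instead recognises $V_0$ as the unit bundle of a manifold with boundary and invokes the structural discussion opening \cref{sec.U(M-K)} to get $\Gamma_0 \simeq ST^*(\partial N(K))$ as a contact submanifold for free. This is cleaner and explains \emph{why} the dividing set is what it is, whereas the paper's computation is self-contained but opaque.

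One slip to fix: the Euclidean metric $dt^2 + |dz|^2 = dt^2 + dr^2 + r^2 d\theta^2$ is \emph{not} a product near $\partial N(K) = \{r = \sqrt\delta\}$, because the $r^2 d\theta^2$ term depends on the transverse coordinate $r$. So you cannot literally invoke the paragraph preceding the Proposition, which is written for $g = du^2 + g_\partial$. This does not break your argument: the Proposition itself (lifting a transverse vector field on $M$ to a contact vector field on $ST^*M$) needs no product hypothesis, and one checks directly that $\lambda_\eu$ restricted to $\Gamma_0$ is $-\tau\,dt - \eta_\theta \sqrt{\delta}\,d\theta$ with $\tau^2 + \eta_\theta^2 = 1$, i.e.\ the geodesic contact form on the unit bundle of the boundary torus. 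You should replace the ``product'' claim by this one-line verification, or alternatively by the remark following the Proposition (the metric $dr^2 + r^2 d\theta^2 + dt^2$ is of the form $du^2 + f(u)g_\partial$ with $f$ increasing, which that remark says also yields an adapted form).
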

\begin{proof}
Indeed the Reeb vector field is tangent to the boundary when $q \cdot p = 0$, and this set is a contact submanifold : its tangent space is the intersection of the forms $q dq$ and $pdq + qdp$ (which are trivially independent on $\Ga_0$), hence it is generated by the vectors $\dd_s, V = iq \dd_p, W = iq \dd_q + ip \dd_p$. 
We can now check the contact condition :
\begin{align*}
    \l \we d\l(\dd_s, V,W) & =  \big (4pdq\we pdp + 2 (1-p^2) dq\we dp \big)(V,W) \\
     & = - 4 (p \cdot iq)^2 - 2 (1-p^2) (iq \cdot iq) \\
     & = -4 p^2 q^2 - 2(1-p^2) q^2 \\
     & = -2 (p^2 q^2 + q^2).
\end{align*}
Moreover, the Liouville vector field associated to $\l_\eu$ on $\{\pm p\cdot q > 0\}$ is outgoing, because $\Ga_0$ is a connected contact boundary. We can also compute that this vector field is directed by 
\[ Y_\pm \sim (2p - \frac{1+p^2}{p\cdot q} q) \dd_p,  \]
which is indeed transverse to $\{p\cdot q = 0\}$, so \cref{lem.cvx_adapt} applies. 
\end{proof}

\paragraph{Moving the suture.}
We now change the contact form (but not the contact structure) to displace the suture away from the gluing loci. We set
\[ \l_\e = (1 + 2 \e p \cdot q) \big((1-p^2)ds - 2 p dq \big). \]

\begin{prop}
The Reeb vector field associated to $\l$, projected parallel to $\dd_s$, is directed by
\[  \big( \e (1+ p^2)q- (1 + 4\e p\cdot q) p \big) \dd_q - \e (1-p^2)  p\dd_p   \]
and this form is adapted to the convex boundary, with associated suture 
\[ \Ga_\e = \{ p\cdot q = (1+p^2) \d \e + O(\e^2) \}   . \]
\end{prop}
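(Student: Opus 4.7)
The plan is to apply \cref{lem.Reeb.S1xW} with $\phi = 1 + 2\e\,p\cdot q$, $f = 1 - p^2$ and $\b = -2p\,dq$, and then read off the suture from the tangency locus of the resulting Reeb vector field on the horizontal boundary $\{|q|^2 = \d\}$.

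The first step is to compute the auxiliary data of the formula. With $d\b = 2\,dq\we dp$, direct inversion yields the Liouville vector field $Y = p\,\dd_p$ and the two Hamiltonian-like lifts $X_f = -p\,\dd_q$ and $X_\phi = \e(q\,\dd_q - p\,\dd_p)$. The scalar contractions that enter the formula then evaluate to $d\phi(Y) = 2\e\,p\cdot q$, $df(Y) = -2p^2$ and $df(X_\phi) = 2\e p^2$, so that $\phi + d\phi(Y) = 1 + 4\e p\cdot q$ and $f - df(Y) = 1 + p^2$. Substituting back into the formula of the lemma, the projection of $R$ parallel to $\dd_s$ factors through the positive scalar $1/(\phi^2(1+p^2))$, and after collecting the $\dd_q$ and $\dd_p$ contributions one recovers exactly the stated expression
\[
\big(\e(1+p^2)q - (1+4\e p\cdot q)p\big)\,\dd_q - \e(1-p^2)p\,\dd_p.
\]

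For the suture, since the relevant boundary is the level set $\{|q|^2 = \d\}$, tangency of the Reeb vector field amounts to the vanishing of the scalar product with $q$ of its $\dd_q$-component. Taking that inner product produces the implicit equation
\[
\e(1+p^2)\d \;=\; (1+4\e\,p\cdot q)(p\cdot q),
\]
which solved in $p\cdot q$ to first order in $\e$ gives $\Ga_\e = \{p\cdot q = (1+p^2)\d\e + O(\e^2)\}$. Adaptedness of $\l_\e$ to the convex boundary is then essentially automatic for small $\e$: the normal component of $R_\e$ changes sign cleanly across $\Ga_\e$ (which yields the splitting into $R_\pm$), and both this transversality and the contact condition on $\Ga_\e$ are open properties already verified at $\e = 0$ by the previous lemma (where $\Ga_0 = \{p\cdot q = 0\}$ was shown to be a contact submanifold), hence persist under small perturbation.

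The only real difficulty in the argument is the bookkeeping of the several cross-terms produced by plugging $\phi$ and $f$ into the general formula of \cref{lem.Reeb.S1xW}; once the common positive factor $1/(\phi^2(1+p^2))$ is extracted, the remaining arithmetic is linear in $\e$ and both the compact form of the projected Reeb vector field and the claimed expansion of $\Ga_\e$ drop out directly.
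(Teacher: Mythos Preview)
Your proof is correct and follows essentially the same approach as the paper: you apply \cref{lem.Reeb.S1xW} with the same data $\b=-2p\,dq$, $f=1-p^2$, $\phi=1+2\e p\cdot q$, derive the tangency equation $-4\e X^2 - X + \e(1+p^2)\d = 0$ on $\{|q|^2=\d\}$, and invoke openness of the adaptedness conditions at $\e=0$. If anything, you spell out the intermediate quantities $Y$, $X_f$, $X_\phi$ and the scalar contractions more explicitly than the paper does.
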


\begin{proof}
For $\e$ small enough, this form is a perturbation of $\l_0 = (1-p^2) ds - 2 pdq$, whose Reeb vector field, projected parallely to $\dd_s$, is directed by $- p \dd_q$. As seen previously, this vector field is tangent to the boundary when $p \cdot q=0$, which determines a contact submanifold. 

The requirements "$\Ga_\e = \{x \in \dd V_0, dq(R_\e)=0\}$ is a contact submanifold" and "the Liouville vector fields are outgoing" being open conditions, the form $\l_\e$ is adapted to the suture $\Ga_\e$ for $\e$ small enough.
Applying the previous lemma with 
\[ \b = - 2 pdq  \quad \quad  f = 1 - p^2 \quad \quad    \phi = 1 + 2 \e p \cdot q, \]
we obtain the desired formula.

We now compute the suture $\Ga_\e$, setting $\chi = q \cdot p$ :
\begin{align*}
    q dq(R_\e) & =  - 4 \e X^2 - X + \e (1+p^2) \d, 
\end{align*}
hence at the first order in $\e$ the elements of $\Ga_\e$ satisfy
\begin{align*}
\chi & = \frac{1}{8\e} (-1 \pm \sqrt{1 + 16 \e^2 (1+p^2) \d  }  )  \\
 & = (1+p^2)\d \e + O(\e^2)
\end{align*}
(only the positive solution survives). 
\end{proof}

\paragraph{Detaching handles.}
The new suture $\Ga_\e$ is disjoint from the torus $\{q^2 = \d, p = 0\}$ (which is now part of the horizontal boundary $R_-$), so we can glue a circular handle : for $\e' << \e$, we define
\[ V_1 = \{q^2 \geq \d\} \cup  \{p^2 \leq \e'\}, \]
where the corners are smoothed.
According to \cref{ssec.Circ_handle}, if the contact form is adapted to the new boundary $\dd V_1$, then for any Legendrian $\La \subset V_0$ we have 
\[ LH(\La, V_0, \l_\e) \simeq LH(\La, V_1, \l_\e).  \]

To effectively get an adapted contact form, we slightly hollow out the handle, meaning that we now glue $\{q^2 \leq \d, p^2 \leq \mu(q^2)\}$), where $\mu : [0, \dd^2] \ra \RR^+$ is increasing, such that $\mu_0 = \e'$ and $\mu'(\dd^2) = +\ity$. 
\begin{lem}
 The contact form $\l_\eu$ is adapted to $V_1$.
\end{lem}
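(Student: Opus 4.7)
The plan is to verify the three conditions of \cref{lem.cvx_adapt} on $\dd V_1$. First I would decompose $\dd V_1$ into two pieces: the part $\Si_0 = \{|q|^2 = \d\} \cap \dd V_1$ surviving from $\dd V_0$, and the new handle cap $\Si_1 = \{|q|^2 \leq \d,\ |p|^2 = \mu(|q|^2)\}$. The condition $\mu'(\d^2) = +\ity$ ensures that the two pieces glue $C^1$-smoothly at their common edge. On $\Si_0$ nothing new needs to be done: the analysis performed for $V_0$ already shows $\l_\eu$ adapted with dividing set $\{p\cdot q = 0\}$, and restricting to the subset $\{|p|^2 \geq \mu(\d^2)\}$ preserves all three conditions.

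Next, on the handle cap $\Si_1$, I would locate the Reeb tangency. Using \cref{eqn.hyperb.Reeb_eu.cartes} and the defining function $F = |p|^2 - \mu(|q|^2)$, a short calculation yields
\[dF(R_\eu) = -\frac{4\mu'(|q|^2)}{1+|p|^2}\, p\cdot q,\]
which vanishes precisely on $\{p\cdot q = 0\}$ since $\mu' > 0$. The sign of this expression shows that $R_\eu$ is positively transverse to $\Si_1$ on one side of $\{p\cdot q = 0\}$ and negatively on the other, and the candidate dividing set $\Ga_1 = \{p\cdot q = 0\} \cap \dd V_1$ is globally consistent because it extends the dividing set of $V_0$ continuously across the transition.

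To finish, I would verify that $\Ga_1$ is a contact submanifold of $\Si_1$. The plan is to exhibit three tangent vectors at a generic point (namely $\dd_s$, the rotation $W = iq\,\dd_q + ip\,\dd_p$, and a third field along $\Si_1$ adapted to the slope $\mu'$, of the form $q\,\dd_q + \frac{\mu'(|q|^2)|q|^2}{|p|^2} p\,\dd_p$) and directly compute $\l_\eu \we d\l_\eu$ on the triple. The resulting expression should be non-zero with the same sign as in the $V_0$ computation, up to a positive correction controlled by $\mu'$; non-degeneracy would then follow, and all three conditions of \cref{lem.cvx_adapt} would be satisfied.

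The main obstacle I anticipate is the behaviour at the centre $\{q = 0\}$ of the handle, where the equation $p\cdot q = 0$ is automatically satisfied and the naive tangency locus jumps to the whole fibre $\{|p|^2 = \mu(0)\}$, which has the wrong codimension. I would handle this either by refining the hollowing profile $\mu$ near $0$ so that the Reeb tangency becomes transverse only where it should be, or, more conceptually, by identifying a neighbourhood of $\{q = 0\}$ in $V_1$ with a standard circular handle model as in \cref{ssec.Circ_handle}, where adaptedness is automatic from the normal form. Combining the local model near $q = 0$ with the generic computation away from it would then complete the proof.
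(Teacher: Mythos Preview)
Your overall strategy matches the paper's: split $\dd V_1$ into the old piece $\Si_0$ and the handle cap $\Si_1$, identify the tangency locus as $\{p\cdot q = 0\}$, and check the contact condition on a frame for $T\Ga_1$. The paper carries this out with essentially the same computation.

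The one substantive issue is that the ``main obstacle'' you anticipate at $q=0$ is not real. The tangency locus $\{p\cdot q = 0\}\cap\Si_1$ is a smooth codimension-one submanifold of $\Si_1$ everywhere, including over $q=0$: the differential of the function $p\cdot q$ on $\Si_1$ is $p\,dq + q\,dp$, and at $q=0$ this equals $p\,dq$, which is nonzero because $|p|^2 = \mu(0) = \e' > 0$ on the handle cap. So there is no jump in codimension and no need to refine $\mu$ or invoke a separate local model. What \emph{does} degenerate at $q=0$ is your proposed third tangent vector $q\,\dd_q + \tfrac{\mu'|q|^2}{|p|^2}\,p\,\dd_p$, which vanishes there; that is an artifact of your frame, not of the geometry. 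The paper avoids this by taking instead
\[
V = iq\,\dd_q + ip\,\dd_p, \qquad W = p^2\, ip\,\dd_q + \mu'(q\cdot ip)\, p\,\dd_p,
\]
both of which remain nonzero and independent at $q=0$, and then computes $\l_\eu\wedge d\l_\eu(\dd_s,V,W)$ directly. If you replace your third vector by something like $W$ (or simply note that the defining function has nonvanishing differential), your argument goes through without the detour you sketched.
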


\begin{proof}
After smoothing, the new boundary splits into
\[\dd V_1 = \{ q^2 = \d, p^2 \geq \e' \} \cup \{q^2 \leq \d, p^2 = \mu(q^2) \}. \]
The Reeb vector field $R_\eu$ (projected parallely to $\dd_s$) being directed by $p \dd_q$, it is tangent to the boundary when $X = p \cdot q = 0$, hence the suture splits into  
\[\Ga_1 = \{ q^2 = \d, p^2 \geq \e', X=0 \} \cup \{q^2 \leq \d, p^2 = \mu(q^2), X=0 \}. \]
As previously, the first part is a contact submanifold. The tangent space of the second part is the intersection of the forms $pdq +q dp$ and $pdp - \mu' qdq$, which are independents : if they were proportional, $p$ and $q$ would be colinear, and so $p\cdot q=0$ implies $q= 0$ (since $p$ is non-zero on the new boundary). But $p.dq$ and $p.dp$ aren't proportional.

We now check the contact condition : the tangent space is generated by the vectors $\dd_s$ and
\begin{align*}
    & V = iq \dd_q + ip \dd_p \\
    & W = p^2 ip \dd_q + \mu' (q \cdot ip) p \dd_p
\end{align*}
We now compute :
\begin{align*}
    (df \wedge \b + fd\b) (V, W) & = \big( 4 pdp  \wedge pdq + 2 (1-p^2)dq \wedge dp \big)(V, W) \\
     & = - 4 \mu' (q \cdot ip)p^2 (p \cdot iq) + 2 (1-p^2)\big( \mu' (q \cdot ip) (iq \cdot p) - p^4 \big) \\
     & = - 2 (1-p^2)p^4 -  2 \mu' (q \cdot ip)^2 (1 - p^2 - 2 p^2)\big) \neq 0 
\end{align*}
hence $\l_\eu$ is adapted to the boundary of $V_1$.
\end{proof}

By changing charts and repeating the procedure, we obtain a contact manifold 
\[V_2 = UM \sms \tilde N,\]
with boundary $(\ker \l_\eu)$-convex, and such that $\l_\eu$ is an adapted contact form. A priori $\tilde N$ is distinct from the standard neighbourhood $N$ defined above, however we can interpolates between them through convex hypersurfaces. Indeed the vector field
\[ v = q \dd_q - \frac{1-p^2}{1+p^2} p\dd_p \]
stays transverse to the boundary, and is contact : 
\begin{align*}
    \L_v \l_0 & = \L_{q \dd_q - f p\dd_p} ((1-p^2) ds - 2 pdq)  \quad \text{où } f = \frac{1-p^2}{1+p^2}\\
    & = - d(2 p \cdot q) + \i_v (-2p dp \wedge ds + 2 dq \wedge dp) \\ 
    & = -2 (pdq + qdp) + 2 f p^2 ds + 2(qdp + f pdq) \\
    & = 2 f p^2 ds + 2 (f- 1) pdq \\
    & = \frac{2 p^2}{1+p^2} ((1- p^2) ds - 2 pdq ).
\end{align*}

This last observation concludes the proof of \cref{lem.U(-).-(U)} : 
\begin{align*}
LH(\La_x, V_0, \l_\eu)&  \simeq LH(\La_x, V_0, \l_\e) \simeq LH(\La_x, V_2, \l_\e) \\
  & \simeq LH(\La_x, V_2, \l_\eu) \simeq LH(\La_x, V, \l_\eu),
\end{align*}
which implies \cref{MainThm.UM-UK}, since the forms $\l_\eu$ and $\l_h$, used in \cref{sec.U(M-K)}, are both adapted to the contact manifold $V_0$.

\section{Corollaries and generalisations}

\subsection{The Floer homology of a fiber}\label{sec.stopped_Floer} 

In this last section we prove \cref{MainCor.stopped_Floer}. Consider $x \in S^3$ disjoint from the knot $K$, so the fiber at $x$ is $F \subset X = TS^3$, disjoint from the conormal $L_K$. Then the stopped Floer homology $CF(F; X, L_K)$ is a constructed by choosing a perturbed Lagrangian $\tilde F$, and considering the complex generated by 
\begin{itemize}
    \item the intersection points $F \cap \tilde F$ ;
    \item the Reeb chords going from the unit fiber $F^1$ to the Legendrian $\dd \tilde F$.
\end{itemize}
Usually $\tilde F$ is taken as $F$ perturbed by the Hamiltonian $H$, vanishing on the zero section and evaluating to $\e |p|^2$ away from it. In this case we obtain only one intersection point since $F \cap \tilde F = {(x,0)}$, and $\dd \tilde F = F^1 + \e R$.

We will however choose another perturbation as we want to avoid intersection points between the Lagrangians, so we can use the contact tools developed above. Hence we pick $\tilde F = F_{\tilde x}$ the fiber of a point close to $x$. Note that $\tilde F$ can still be obtained by Hamiltonian perturbation, since we have a path of exact Lagrangian interpolating between $F$ and $\tilde F$ (namely, we take the fibers over a path with endpoints $x$ and $\tilde x$).

With this perturbation we obtain no intersection points, and $\dd \tilde F = F^1_{\tilde x}$. By our previous construction, we the Legendrian homology of the fibers in the stopped manifold $US^3 \sms \La_K$ can be computed in $U(S^3 \sms K)$ :
\[LH(F^1_x, F^1_{\tilde x}  ; US^3 \sms \La_K) = LH(F^1_x, F^1_{\tilde x}  ; U(S^3 \sms K))\] 

Once again the Reeb chords from $F^1_x$ to $F^1_{\tilde x}$ correspond to elements of $\pi_K$. Note that this time we also get a chord corresponding to the unit, because we consider two unit fibers. Moreover the differential still vanishes by action reasons, and the product still corresponds to the group law by the same arguments, so we obtain the claimed isomorphism of rings :
\[ HF(F, DS^3 \sms \La_K) \simeq \ZZ[\pi_K]. \]

\subsection{Higher dimensional handles} \label{ssec.Higher_dim_handle}
The previous result still holds in a more general setting, it is only a matter of upgrading the notations. Given a $k$-dimensional submanifold $K \subset M$, where $M$ is of dimension $n$, choose coordinates on a neighbourhood
\[ \N(K) \simeq K_a \times D^{n-k}_z, \]
inducing coordinates on the unit bundle (after picking a metric on $K$) :
\[ \N(UM_{\rest K}) \simeq K_a \times D^{n-k}_z \times \{ (\a, \eta) \in T_x K \times \RR^{n-k},  \a^2 + \eta^2 = 1 \}.  \]
Note that the conormal of $K$ is now $\La_K = \{ z=0, \a = 0, \eta^2 = 1 \}$.

As previously, denote $V_0$ the unit bundle of the complement of (a standard neighbourhood of) $K$, and $V_1$ the complement of (a standard neighbourhood of) $\La_K$ :
\begin{align*}
    V_0 &   = \{\a^2 + \eta^2 = 1, z^2 \geq \d \} \\
    V_1 & = \{\a^2 + \eta^2 = 1,  z^2 + \a^2 \geq \d  \} ,
\end{align*}
for some small $\d >0$. Once again those contact manifolds have convex boundaries, and we can topologically obtain $V_1$ by gluing an handle 
\[H = \{\a^2 + \eta^2 = 1. z^2 \leq \d, \a^2 \leq \e\} \simeq UK \times D^{2(n-k)}\]
on the boundary of $V_0$, along the gluing locus
\[\{z^2 = \d, \a^2 = 1, \eta^2 = 0\} \simeq K \times S^{n-k-1} \times S^{k-1}.\]
 Note that the contact form $\l_0$, induced by the metric, is adapted to the dividing set of $\dd V_0$, given by 
\[ \Ga_0  = \{ z^2 = \d, z \cdot \eta = 0, \a^2 + \eta^2 = 1  \} \simeq K \times S^{n-k-1} \times S^{n-1}. \]

We encounter the same issues as in the previous section, namely the gluing locus of the handles intersects the dividing set, which we can once again solve by perturbing the contact form. Define
\[\l_\e  = (1+ \e \eta \cdot z) \l_0. \]
We will show that its dividing set is disjoint from the gluing locus of the handle, so the results of \cref{ssec.Circ_handle} applies. In this section we work without charts, which make the computations more straightforward. However the relation between $V_1$ and $V_0 \cup H$ is not so clear anymore, but working in charts as previously we can find a contact vector field interpolating between those two contact manifolds with convex boundary.

Let us first compute the Reeb vector field $R_\e$ of the contact form $\l_\e$, setting $f = 1+ \e z\cdot \eta$. It is related to $R_0$ by the formula
\[ R_\e = \frac{1}{f^2} (R_0 + \e f v), \] 
where $v \in \ker \l_0$ is determined by $\i_v d\l_0 = d(z\cdot \eta)$. A direct computation yields 
\[ v = -(z\cdot \eta) \dd_a + (z - (z\cdot \eta) \eta)\dd_z - \eta^2 \dd_\a - \a^2 \dd_\eta.   \]
Moreovoer the Reeb vector field of $\l_0$ is given by 
\[ R_0 = \a \dd_a + \eta \dd_z + * \dd_\a, \]
since we choosed the flat metric on the $\RR^{n-k}$ factor. We can now express the dividing set associated to $(V_0, \l_\e)$ :
\begin{align*}
	z.dz(R_\e)&  = \frac{1}{f^2} (z\cdot \eta + \e f z\cdot (z - (z\cdot \eta) \eta)) \\
	 & = \frac{1}{f^2} (X + \e (1+\e X) (\d - X^2)) \qquad \text{ where } X = z\cdot \eta \\
	 & =  \frac{1}{f^2} (-\e^2 X^3 - \e X^2 + (1+ \e^2 \d)X + \e \d).
\end{align*}

At the first order in $\e$, we get $ z.dz(R_\e) \sim -\e X^2 + X + \e \d + O(\e^2)$,
which vanishes when 
\[ X = \frac1{2\e} (-1 + \sqrt{1+4 \e^2 \d} ) + O(\e^2) =  \e \d + O(\e^2). \]

In other words $R_\e$ is tangent to $\dd V_0$ on  
\[\Ga_\e = \{z \cdot \eta  = \e \d + O(\e^2) \},  \]
which remains a contact manifold for $\e$ small enough. One can also check that the Liouville vector field induced on the boundary of $V_0$ is given (up to sign) by 
\[ Y =  (\eta + \frac{1}{z\cdot \eta} z) \dd_\eta + \a \dd_\a,\]
which is indeed transverse to $\Ga_\e$, so $\l_\e$ is adapted to the suture $\Ga_\e$.

Since the gluing locus of the handle is now disjoint from the suture, the arguments of \cref{sssec.Modify_contact_form} hold, which imply \cref{MainThm.UM-UK} :
\begin{align*}
    LH(\La, V_0, \l_0) & \simeq LH(\La, V_0, \l_\e) & \qquad \text{by invariance}\ \\
    & \simeq LH(\La, V_0 \cup H, \l_\e)   & \qquad \text{by \cref{ssec.Circ_handle}} \\
    & \simeq LH(\La, V_0 \cup H, \l_0)    & \qquad \text{by invariance}\  \\
    & \simeq LH(\La, V_1, \l_0),
\end{align*}
where the last step is obtained by interpolating between $V_0 \cup H$ and $V_1$, via a contact vector field, using the charts defined \cref{sssec.Stereo_charts}.

\subsection{Cooriented hypersurface} \label{ssec.Cooriented_hypersurf}

We now extend the previous result to the cooriented conormal hypersurface, proving \cref{MainThm.LCH_Hypersurf_Stop}. Consider an hypersurface $\Si \subset M$, where $M$ is $n$-dimensional, and choose local coordinates on a neighbourhood, inducing on the unit bundle
\[ \N(UM_{\rest \Si}) \simeq \Si_a \times [-1,1]_z \{(\a, \eta) \in T_a\Si \times \RR, \a^2 +  \eta^2 = 1 \}. \]

In those coordinates, the {\it cooriented} conormal of $\Si$ is 
\[ \La^+\Si = \{z= 0, \a = 0, \eta = 1  \},\]
and as before we denote $V_0$ the unit bundle of the complement of $\Si$, and $V_1$ the complement of the cooriented conormal $\La_\Si^+$.
Observe that topologically, one can recover $V_1$ from $V_0$ by gluing an handle $H= I \times D^* \Si$ along $\dd V_0$ ; more precisely, the gluing locus is $\{ z^2 = \d, \eta \leq \e_0, \a^2 + \eta^2 =1 \}$. We encounter the usual issue, as the dividing set $\{z^2 = \d, \eta=0, \a^2 = 1 \}$ is entirely contained in this part of the boundary.

Denote $M_0^\pm$ the two connected component of $M \sms \Si$, and $V_0^\pm$ there unit bundle. By perturbing the contact form as previously, we can displace the dividing set of $\dd V_0^-$ away from the gluing locus (however the dividing set of $\dd V_0^+$ is still included inside). In other words, we have
\begin{align*}
    V_1 & \simeq V_0 \cup H \\
        & \simeq V_0^- \cup (V_0^+ \cup H).
\end{align*}

By the result of \cref{ssec.Circ_handle}, for a Legendrian $\La \subset V_0^-$, we get
\[ LH(\La, V_1) \simeq LH(\La, V_0^-), \]
which proves \cref{MainThm.LCH_Hypersurf_Stop}.


\bibliographystyle{alpha}
\bibliography{references}

    \else
    \fi

\end{document}